\documentclass[a4paper,11pt]{amsart}
\usepackage[utf8]{inputenc}
\usepackage{hyperref, mathrsfs, enumerate, dsfont, mathtools, amssymb, xcolor, bm, bbm}
\hypersetup{backref,colorlinks=true,allcolors=black}
\usepackage[foot]{amsaddr}
\usepackage{amsmath, amsthm, soul, comment}
\usepackage{rotating, graphicx, caption, subcaption, float}
\graphicspath{ {./images/} }
\usepackage[super]{nth}
\newtheorem{lemma}{Lemma}[section]

\newtheorem{definition}{Definition}[section]

\newtheorem{theorem}{Theorem}[section]
\newtheorem{remark}{Remark}[section]

\newcommand{\continuation}{??}

\newtheorem{example}{Example}[section]
\newtheorem{assumption}{Assumption}[section]

\newtheorem*{acknow*}{Acknowledgments}

\newcommand{\ignore}[1]{}
\allowdisplaybreaks
\hypersetup{colorlinks=true, linkcolor=blue, filecolor=magenta, urlcolor=cyan}
\newcommand\numberthis{\addtocounter{equation}{1}\tag{\theequation}}
\newcommand\Label[1]{&\refstepcounter{equation}\left(\theequation\right)\ltx@label{#1}&}

\newcommand{\R}{\mathbb{R}}

\newcommand{\norm}[1]{\lVert #1 \rVert}
\newcommand{\abs}[1]{\vert #1 \rvert}

\newcommand{\floor}[1]{\lfloor #1 \rfloor}

\newcommand{\vmu}{\bm{\mu}}
\newcommand{\vxi}{\bm{\xi}}
\newcommand{\vxibar}{\bar{\bm{\xi}}}
\newcommand{\vrho}{\bm{\rho}}

\newcommand{\ntzj}{\nu_t^{\zeta,j}}

\newcommand{\mtzj}{\mu_t^{\zeta,j}}
\newcommand{\msmzj}{\mu_{s-}^{\zeta,j}}
\newcommand{\msmzi}{\mu_{s-}^{\zeta,i}}

\newcommand{\mszi}{\mu_{s}^{\zeta,i}}
\newcommand{\mszk}{\mu_{s}^{\zeta,k}}
\newcommand{\msmzk}{\mu_{s-}^{\zeta,k}}

\newcommand{\dpi}{\delta_{p_i}}
\newcommand{\Rd}{\mathbb{R}^d}
\newcommand{\HQ}{H_Q^i(\gamma\mu_{s^-}^{\zeta, j})}
\newcommand{\HV}{H_V^i(\gamma\mu_{s^-}^{\zeta, j})}

\newcommand{\HQirj}{H_Q^{i_r^{(j)}}(\gamma\mu_{s^-}^{\zeta, j})}
\newcommand{\HVirj}{H_V^{i_r^{(j)}}(\gamma\mu_{s^-}^{\zeta, j})}
\newcommand{\fmi}{\left\langle f, \mu_{0}^{\zeta, j}\right\rangle}
\newcommand{\sumall}{\sum_{i=1}^{\gamma\left\langle 1, \mu_{s^-}^{\zeta, j}\right\rangle}}
\newcommand{\all}{\gamma\left\langle 1, \mu_{s^-}^{\zeta, j}\right\rangle}
\newcommand{\ginverse}{\frac{1}{\gamma}}
\newcommand{\pfpv}{\frac{\partial f}{\partial V}}
\newcommand{\ppfppv}{\frac{\partial^2 f}{\partial V^2}}
\newcommand{\pfpq}{\frac{\partial f}{\partial Q}}
\newcommand{\alj}{\alpha_{\ell j}}
\newcommand{\blj}{\beta_{\ell j}}
\newcommand{\klg}{K_{\ell}^{\gamma}}
\newcommand{\pl}{\mathcal{P}^{(\ell)}(\gamma\mu_{s^-}^{\zeta}, \bm{i})}
\newcommand{\lmeasure}{\lambda^{(\ell)}[\mu_{s-}^{\zeta}](d\bm{x}, d\bm{v})}

\newcommand{\llmeasure}{\lambda^{(\ell)}[\xi_{s}](d\bm{x}, d\bm{v})}
\newcommand{\llbmeasure}{\lambda^{(\ell)}[\bar{\xi}_{s}](d\bm{x}, d\bm{v})}

\newcommand{\xrj}{x_r^{(j)}}
\newcommand{\vrj}{v_r^{(j)}}
\newcommand{\txrj}{\tilde{x}_r^{(j)}}
\newcommand{\tvrj}{\tilde{v}_r^{(j)}}
\newcommand{\yrj}{y_r^{(j)}}
\newcommand{\vprj}{{v'}_{r}^{(j)}}

\newcommand{\txtv}{\tilde{\mathbb{X}}^{(\ell)} \otimes \tilde{\mathbb{V}}^{(\ell)}}
\newcommand{\xv}{\mathbb{X}^{(\ell)} \otimes \mathbb{V}^{(\ell)}}
\allowdisplaybreaks[3]
\newcommand{\yv}{\mathbb{Y}^{(\ell)} \otimes \mathbb{V}'^{(\ell)}}
\allowdisplaybreaks[3]

\newcommand{\fjm}{f_{j,m}}

\newcommand{\Mt}{M_t^{f,j}}
\newcommand{\At}{A_t^{f,j}}
\newcommand{\cfjt}{\mathcal{C}_{t}^{f, j}}
\newcommand{\dfjt}{\mathcal{D}_{t}^{f, j}}
\newcommand{\supint}{\displaystyle\sup_{t\in [0,T]}}
\newcommand{\gtgamma}{g_t(m,\zeta)}
\renewcommand{\vec}[1]{\bm{#1}}
\newcommand{\paren}[1]{\left(#1\right)}

\newcommand{\Vti}{V_{t}^{i}}

\newcommand{\ind}{\mathbbm{1}}

\newcommand{\la}{\left \langle}
\newcommand{\ra}{\right\rangle}

\allowdisplaybreaks[3]

\author{S. A. Isaacson}
\email[Samuel A. Isaacson]{isaacsas@bu.edu}
\author{Q. Liu}
\email[Qianhan Liu]{liuq19@bu.edu}
\author{K. Spiliopoulos}
\email[Konstantinos Spiliopoulos]{kspiliop@bu.edu}
\thanks{Corresponding Authors: SAI, KS. The authors were supported by ARO W911NF2510078. SAI was also supported by NSF-DMS 2325185. Competing interests: The author(s) declare none}

\title[Mean Field Limits for Underdamped Langevin PBSRD]{Mean Field Limits for Stochastic, Underdamped Reactive Langevin Dynamics Models}

\date{\today}

\begin{document}

\maketitle

\begin{abstract}
We rigorously derive the effective large-population, mean-field dynamics of particle-based reactive Langevin dynamics (PBRLD) models. These models extend particle-based stochastic reaction-diffusion (PBSRD) descriptions by incorporating velocities, inertial effects, and underdamped motion. In Isaacson, Liu, Spiliopoulos, and Yao, SIAP 2026, PBRLD models were formulated and shown to recover Doi’s volume reactivity PBSRD model in the overdamped limit. In this work we prove convergence of the associated measure-valued stochastic processes, representing species concentration fields on position-velocity phase space, to a deterministic mean-field limit. The limiting equations form a novel system of nonlocal kinetic reaction-diffusion partial integro-differential equations, coupling hypoelliptic transport with reaction terms that retain the spatial and velocity structure of the underlying particle interactions.
\end{abstract}

\textit{Keywords:}
Particle-based Stochastic Reaction-Diffusion, Langevin dynamics, mean-field limit, coarse graining


\section{Introduction}
In many systems across the biological sciences, the effective population dynamics of large collections of discrete agents or particles are driven by the stochastic movements and reactions of individual agents or particles~\cite{NaylorSimbiotics2017,DustinHermann2018dw,Chakraborty2010kb,BartolSynapse2012,HellanderHes12012,WoldeEgfrdPNAS2010}. When particles move by diffusion, as commonly assumed in modeling cellular, neurological, and immune signaling processes~\cite{Chakraborty2010kb,BartolSynapse2012,HellanderHes12012,WoldeEgfrdPNAS2010}, particle-based stochastic reaction-diffusion (PBSRD) models are a popular approach for explicitly modelling the stochastic dynamics of such systems. In recent years it has been well-established how such models rigorously relate to more coarse-grained deterministic and stochastic partial differential equation (PDE/SPDE) models~\cite{Nolen2019,IsaacsonSIAP2021, IsaacsonSIMA2022, HeldmanSPA2022, IsaacsonMeanfieldDrift2023,VeberAnnals2023}. The latter are often postulated as approximations to the macroscopic dynamics of PBSRD models in certain large-population or thermodynamic limits where the population size becomes unbounded but species concentrations are held fixed.

The goal of this work is to rigorously derive new deterministic, macroscopic equations describing the large-population mean-field limit for the behavior of particle-based reactive Langevin dynamics (PBRLD) models. Such models are more microscopic analogues to PBSRD models that retain inertial forces, with agents and/or particles moving with \emph{stochastically} forced velocities as part of their spatial transport. PBRLD models are \emph{more microscopic} than PBSRD models, with the latter corresponding to the overdamped, i.e. high friction and/or small mass, limit of the former~\cite{IsaacsonChapman2016bv,SamChenLanlanKostas2025}.


Adding inertial forces to PBSRD and agent-based models is critical to accurately modeling interacting populations of cells~\cite{HillenLDMeanField04,NaylorSimbiotics2017,SongtagSynBioRev2018}, swarming and flocking behavior in insects and robots~\cite{LocustMotion2015,Robotics2019}, and the spread of disease among populations~\cite{AnimalMovement2010}. In particular, it was pointed out in~\cite{HillenLDMeanField04} that because velocity is an essential component of the process, modeling systems with flocking, aggregation and/or chemotaxis of cells, or alignment of filaments can require the use of velocity-based models. The PBRLD model and reaction kernels used here are motivated by our recent work in~\cite{SamChenLanlanKostas2025}, where we formulated novel PBRLD models that recover the general volume reactivity PBSRD model in the overdamped limit. The classical Doi model, in which two substrates may react with a fixed probability per time when within a fixed separation, is an example of one such PBSRD model. This work therefore provides a complementary investigation of these models, rigorously establishing their large-population mean-field limit, and the resulting equations that govern their macroscopic dynamics.

To illustrate our main result, consider the reversible $A + B \leftrightarrows C$ reaction in $\R^d$. For each chemical species we can define a measure-valued stochastic process (MVSP) representing their (phase-space) molar concentration field, $A^{\gamma}_t(dx,dv)$, $B^{\gamma}_t(dx,dv)$, and $C^{\gamma}_t(dx,dv)$, where $\gamma$ is a large system parameter (for example Avogadro's constant). Here $x \in \R^d$ denotes position while $v \in \R^d$ denotes velocity. Integrating one of these measures over a phase-space set, $U_x \times U_v \subset \R^{d} \times \R^{d}$, gives us the stochastic process for the number of particles of that species in that set, i.e.
\begin{equation*}
\text{Number of A in } U_x \times U_v \text{ at time } t = \int_{U_x \times U_v} A^{\gamma}_t(dx, dv).
\end{equation*}
Denote by $\left(Q_{t},V_t \right)$ the position and velocity for a given particle of species $A$. In the absence of reactions, the dynamics for $\left(Q_{t},V_t\right)$ are governed by underdamped Langevin dynamics
\begin{align*}
    dQ_t &= V_t dt,\\
    dV_t &= -b_{A} V_t dt + b_{A} \sqrt{2D_A} dW_t^,
\end{align*}
where $W_{t}$ is a standard Brownian motion in $\mathbb{R}^{d}$, $D_A$ is the diffusivity of a species $A$ particle, and $b_A$ is the mass-scaled friction coefficient for species $A$ particles (with units of per time). Reactions are modeled via Poisson random measures, and defined via two key components. For example, suppose an $A$ particle has (phase-space) position $(x_1,v_1)$ and a $B$ particle has position $(x_2,v_2)$. The first component is a reaction kernel that defines the probability per unit time that the $A$ particle and the $B$ particle react to form a $C$ particle, $K^{\gamma}_+(x_1,x_2,v_1,v_2) := K_+(x_1,x_2,v_1,v_2) / \gamma$. The assumed $\gamma$ dependence of the reaction components is discussed in Section~\ref{S:MainAssumptions}. The second component is a particle placement density, $m_+(x_3,v_3 | x_1, x_2, v_1, v_2)$, which gives the probability density a $C$ particle is created at $(x_3,v_3)$ given the substrate $A$ and $B$ particles at $(x_1,v_1)$ and $(x_2,v_2)$ react. The reaction kernel, $K_-(x_3,v_3)$, and particle placement density, $m_-(x_1,x_2,v_1,v_2|x_3,v_3)$, for the reverse reaction, $C \to A + B$, are defined analogously. The reaction kernels and particle placement densities we use are defined in such a way that in the overdamped limit, we recover the general volume reactivity PBSRD model of Doi~\cite{DoiSecondQuantA,DoiSecondQuantB} as described in~\cite{SamChenLanlanKostas2025}.

At a high level, we consider the large-population limit $\gamma \to \infty$, with initial empirical concentration measures chosen so that $A^{\gamma}_0(dx,dv) \to \bar{A}(x,v,0) dx dv$, and analogously for $B^{\gamma}$ and $C^{\gamma}$. We prove that the associated measure-valued processes, $A_t^{\gamma}$, $B_t^{\gamma}$, and $C_t^{\gamma}$ converge to deterministic limiting measures, $\xi^{A}_t$, $\xi ^B_t$, $\xi^C_t$. If these limiting measures have associated densities, i.e. $\xi_t^{A}(dx,dv) = \bar{A}(x,v,t) dx dv$, and similarly for $B$ and $C$, then these phase-space molar concentration fields satisfy the following system of deterministic kinetic reaction-diffusion PDIEs. Let
\begin{align*}
    (\mathcal{L}^*_A)f(x,v)&\coloneqq -v \nabla_{x}f(x,v) + b_{A} \nabla_{v}\cdot [v f(x,v)  + b_{A} D_A \nabla_{v}f(x,v)]
\end{align*}
denote the kinetic Langevin transport operator for species $A$ (with $\mathcal{L}^*_B$ and $\mathcal{L}^*_C$ defined analogously). Then the system of PIDEs for the macroscopic concentration fields is given by
\begin{align*}
\partial_t \bar{A}(x,v,t) &= \mathcal{L}^*_A \bar{A}(x,v,t) - \int_{\R^{2d}} K_+(x,x',v,v') \bar{A}(x,v,t) \bar{B}(x',v',t) dx' dv' \\
&\phantom{=} + \int_{\R^{2d}} K_-(x',v') \left( \int_{\R^{2d}} m_-(x,v,x'',v''|x',v') dx''dv''\right) \bar{C}(x',v',t) dx' dv',\\
\partial_t \bar{B}(x,v,t) &= \mathcal{L}^*_B \bar{B}(x,v,t) - \int_{\R^{2d}} K_+(x',x,v',v) \bar{A}(x',v',t) \bar{B}(x,v,t) dx' dv' \\
&\phantom{=} + \int_{\R^{2d}} K_-(x',v') \left( \int_{\R^{2d}}  m_-(x'',v'',x,v|x',v') dx''dv''\right) \bar{C}(x',v',t) dx' dv',\\
\partial_t \bar{C}(x,v,t) &= \mathcal{L}^*_C \bar{C}(x,v,t)  - K_-(x,v) \bar{C}(x,v,t) \\
&\phantom{=} + \int_{\R^{4d}} K_+(x',x'',v',v'') m_+(x,v|x',x'',v',v'') \bar{A}(x',v',t) \bar{B}(x'',v'',t) dx' dv' dx'' dv''.
\end{align*}
These equations show that the macroscopic dynamics are driven by a kinetic Langevin transport term, $\mathcal{L}^*$, and nonlocal reaction terms that retain the spatial and velocity structure of the underlying particle interactions.

In the remainder, we focus on identifying and then proving the large population limit of the (weak) MVSP representation for the dynamics of PBRLD models of general reaction systems with up to second-order reactions, see Theorem~\ref{T:MainTheorem}. We prove that these measures weakly converge in the large population limit to a unique limiting measure-valued solution of the identified PIDEs for a class of reaction kernels and particle placement densities that are consistent in the overdamped limit with PBSRD models. To prove this result we generalize the martingale problem approach for studying solutions to stochastic differential equations developed by Stroock and Varadhan~\cite{EthierKurtz, StroockVaradhan} to our weak MVSP PBRLD representation. Adaptations of this method have been successfully used to study large-population limits in stochastic models for population dynamics, evolutionary dynamics, interacting particle systems, financial models, and in our work on the large population limit for PBSRD models \cite{TypicalDefaults,LargePortfolio,DaiPra2,DaiPra3,Delarue,Inglis,Moynot,Touboul,Sompolinsky,IsaacsonSIMA2022,IsaacsonDriftDB2023}.

At a high level, the martingale problem approach  developed by Stroock and Varadhan~\cite{EthierKurtz,StroockVaradhan} requires us to establish several mathematically rigorous results:
\begin{enumerate}
\item Identifying the macroscopic hypoelliptic PIDEs that are the large-population limit of the MVSP.
\item Proving that the family of MVSPs generated in this limit is a tight sequence.
\item Proving that the large-population limit of the MVSP converges weakly to a (weak) solution of the martingale problem associated with the formal limiting macroscopic hypoelliptic PIDEs.
\item Proving the uniqueness of the solution to the limiting hypoelliptic  PIDEs.
\end{enumerate}
Combined, these steps establish that the MVSP's large population limit converges (in a weak sense) to the solution of the derived (macroscopic) deterministic hypoelliptic PIDE model.

There are a number of mathematical challenges that we address in order to prove our main result, particularly with regard to showing tightness of the MVSPs for the molar concentration fields as a family over $\gamma$, and in proving uniqueness of the limiting system. For the former, we follow the high level approach we used in earlier works, see~\cite{IsaacsonSIMA2022} and its references, and first prove tightness in $\mathbb{D}_{M_F'(\mathbb{R}^{d}\times \R^d)}[0, T]$, where ${M_F'(\mathbb{R}^{d}\times \R^d)}$ is the space of finite measures endowed with the vague topology. Then,  we require technical estimates to control the mass of measures outside of compact sets in order to go from tightness in $\mathbb{D}_{M_F'(\mathbb{R}^{d}\times \R^d)}[0, T]$ to tightness in $\mathbb{D}_{M_F(\mathbb{R}^{d}\times \R^d)}[0, T]$, where $M_F$ denotes the space of finite measures endowed with the weak topology, see~\cite{MS:1993}. The proof is complicated by the need to handle general combinations of zero, first and second order reactions in the presence of hypoellipticity due to the spatial transport operators for the Langevin dynamics. For the interested reader, the book \cite{Pavliotis2014} has a good introduction to the topic of hypoellipticity.

In addition, for the second order reactions of the form $S_i + S_k \rightarrow S_j + S_r$ conservation of momentum is assumed, which is also consistent with detailed balance, see \cite{SamChenLanlanKostas2025}. This means that a-priori there is no restriction on how far away the velocities of the particles before and after reactions can be. This creates a number of technical issues which do not appear in the overdamped case such as \cite{IsaacsonSIMA2022}, where particle's velocities would not enter the models. In particular, one needs to control carefully what happens when velocities of products are close to and when they are far from the velocities of the particles before the reaction occurred.  The details of the calculations showing how to handle the different regions in this case can be found in Lemma \ref{L:boundoffm}.

The rest of the paper is organized as follows. In Section \ref{S:NotationPrelim} we introduce our notation for describing general reaction systems, the underlying MVSPs, and spaces for substrate/particle positions we will use in defining the dynamics. In Section \ref{S:MainAssumptions} we go over our main assumptions, including the $\gamma$ dependence of the reaction kernels that ensures a well-defined limit. Section \ref{S:MainResult} contains our main result on the mean field large-population limit (Theorem \ref{T:MainTheorem}), and illustrates it via a series of examples. The rest of the sections contain the proof of Theorem \ref{T:MainTheorem}. In particular, we identify the limit in Section \ref{S:Identification}, prove tightness of the measure-valued stochastic processes governing the particle dynamics in Section \ref{S:tightness}, and prove uniqueness of the mean field limit in Section \ref{S:uniqueness}. Section \ref{S:MomentBounds} contains the proofs of a number of technical moment bounds involving the reaction terms and the hypoelliptic operator.
\section{Notations and preliminary definitions}\label{S:NotationPrelim}

We consider a collection of particles with $J$ different types, with $\mathcal{S}=\{S_{1}, \cdots, S_{J}\}$ the set of different possible types and $p_{i} \in \mathcal{S}$ the type of the species of the $i$th particle. For the rest of the paper, we will interchangeably use the terms particle or molecule and type or species. We also define all random variables on an underlying probability triple, $(\Omega, \mathcal{F}, \mathbb{P})$.

In our model, molecules move via underdamped Langevin dynamics in space $\mathbb{R}^{d}$, and can undergo $L$ possible types of reactions, denoted as $\mathcal{R}_{1}, \cdots, \mathcal{R}_{L}$. We use non-negative integer stoichiometric coefficients $\{\alpha_{\ell j}\}_{j=1}^{J}$ and $\{\beta_{\ell j}\}_{j=1}^{J}$ to describe the $\mathcal{R}_{\ell}$th reaction, $\ell \in\{1, \ldots, L\}$, as
$$
\sum_{j=1}^{J} \alpha_{\ell j} S_{j} \rightarrow \sum_{j=1}^{J} \beta_{\ell j} S_{j},
$$
and the multi-index vectors $\bm{\alpha}{ }^{(\ell)}=$ $\bigl(\alpha_{\ell 1}, \alpha_{\ell 2}, \cdots, \alpha_{\ell J}\bigr)$ and $\bm{\beta}^{(\ell)}=\bigl(\beta_{\ell 1}, \beta_{\ell 2}, \cdots, \beta_{\ell J}\bigr)$ to collect the coefficients of the $\ell$th reaction. We denote the substrate and product orders of the reaction by $|\bm{\alpha}^{(\ell)}| \doteq \sum_{i=1}^{J} \alpha_{\ell i} \leq 2$ and $|\bm{\beta}^{(\ell)}| \doteq \sum_{j=1}^{J} \beta_{\ell j} \leq 2$. The implicit assumption that all reactions are at most second order is based on the premise that the probability that three substrates in a dilute system simultaneously have the proper configuration and energy levels to react is small. This assumption is further supported by the fact that reactions of order three and above are often considered approximations of sequences of bimolecular reactions in biological models. For subsequent notational purposes, we label the reactions such that the first $\tilde{L}$ reactions correspond to those that have no products, i.e., annihilation reactions of the form
$$
\sum_{j=1}^{J} \alpha_{\ell j} S_{j} \rightarrow \emptyset
$$
for $\ell \in\{1, \ldots, \tilde{L}\}$. We assume that the remaining $L-\tilde{L}$ reactions have one or more product particles.

Let $D^{i}$ label the diffusion coefficient for the $i$th particle, taking values in $\{D_{1}, \ldots, D_{J}\}$, where $D_{j}$ is the diffusion coefficient for species $S_{j}, j=1, \cdots, J$. Analogously, let $b^{i}$ label the (mass-scaled) friction coefficient for the $i$th particle, taking values in $\{b_{1}, \ldots, b_{J}\}$, where $b_{j}$ is the scaled friction coefficient for species $S_{j}, j=1, \cdots, J$. That is, $b_j = \gamma_j / m_j$, where $\gamma_j$ is the physical friction coefficient and $m_j$ is the mass for species $S_{j}$. As such, $b_j$ has units of inverse time. We denote by $Q_{t}^{i} \in \mathbb{R}^{d}$ the position and  $V_{t}^{i} \in \mathbb{R}^{d}$ the velocity of the $i$th particle, $i \in \mathbb{N}_{+}$, at time $t$. In the absence of reactions, the dynamics for $\left(Q_{t}^{i},\Vti \right)$ are governed by Eq \eqref{E:LD0}, i.e. the underdamped Langevin dynamics
\begin{align*}
    dQ^i_t &= V_t^i dt,\\
    dV_t^i &= -b^{i} V_t^i dt + b^{i} \sqrt{2D^i} dW_t^i, \numberthis \label{E:LD0}
\end{align*}
where $\{W_{t}^{n}\}_{n \in \mathbb{N}_{+}}$ is a countable collection of standard independent Brownian motions in $\mathbb{R}^{d}$.

A particle's state can be represented as a vector in $\hat{P}=\mathbb{R}^{d} \times \Rd \times \mathcal{S}$, the combined space encoding particle position, velocity, and type. 

We now formulate our representation for the (number) concentration, equivalently number density, fields of each species. Consider a complete metric space $E$ and $M(E)$ the collection of measures on $E$. Let $\mathcal{M}(E)$ be the subset of $M(E)$ consisting of all finite, non-negative point measures of the form
\begin{align*}
\mathcal{M}(E)&=\left\{\sum_{i=1}^{N}\delta_{z^i}, N\geq 1, z^1\cdots, z^N\in E\right\}.
\end{align*}
We will typically choose $E = \Rd \times \Rd$ to represent the position and velocity space of particles, but we will also use $E = \hat{P} := \Rd \times \Rd \times \mathcal{S}$ to represent the combined position, velocity and type space.
For $f: E \mapsto \mathbb{R}$ and $\mu \in M(E)$, define
$$
\langle f, \mu\rangle_{E}=\int_{z \in E} f(z) \mu(dz).
$$
In the most common case that $E=\mathbb{R}^{d}\times \Rd$, we will often omit the subscript $E$ and simply write $\langle f, \mu\rangle := \int_{\mathbb{R}^{2d}} f(x,v) \mu(dx, dv)$. For each $t \geq 0$, we define the density of particles in the system at time $t$ by the distribution
\begin{equation} \label{ncen}
\nu_{t}=\sum_{i=1}^{N(t)} \delta_{(Q_{t}^{i},V_{t}^{i})}\delta_{p_{i}}\in \hat{P}
\end{equation}
where borrowing notation from \cite{Bansaye2015}, $N(t)=\langle 1, \nu_{t}\rangle$ represents the stochastic process for the total number of particles at time $t$. To investigate the behavior of different types of particles, we denote the marginal distribution on the $j$th type, i.e., the concentration field for species $j$, by
$$
\nu_{t}^{j}(\cdot)=\nu_{t}(\cdot \times\{S_{j}\})
$$
a distribution on $\mathbb{R}^{d} \times \Rd$. $N_{j}(t)=\langle 1, \nu_{t}^{j}\rangle$ will similarly label the total number of particles of type $S_{j}$ at time $t$. For $\nu$ any fixed particle distribution of the form \eqref{ncen}, we will also use an alternative representation in terms of the marginal distributions \textcolor{black}{$\nu^j\in \mathcal{M}(\R^d\times \Rd)$ for particles of type $j$,
\begin{equation} \label{eq:densitymeasdefmargrep}
\nu = \sum_{j = 1}^{J} \nu^j\delta_{S_j}  \in \mathcal{M}(\hat{P}).
\end{equation}}

Note that in the remainder, in any rigorous calculation $\nu_{t}$ and $\nu_{t}^{j}$ will be measures and treated as such. We will, however, abuse notation and also refer to them as concentration fields (though it would be better to refer to them as number densities since they are functions of space and velocity). Strictly speaking, the latter should refer to the densities associated with such measures, but we ignore this distinction in the subsequent discussions.

In our rigorous large population limit, we will be taking a simultaneous limit of two parameters. One will be the population scaling parameter $\gamma \to \infty$, and the other will be a displacement range parameter $\eta \to 0$. The parameter $\gamma$ can physically represent Avogadro's number or the system size, while $\eta$'s role is to mollify delta Dirac distributions that arise in modeling the placement measures for reaction product particles. See Section~\ref{S:MainAssumptions} for rigorous definitions of the latter. In particular, this dual limit is encoded via the vector limit parameter
\begin{equation*}
\zeta := \left(\frac{1}{\gamma}, \eta\right) \to 0.
\end{equation*}
To study this limit we will work with rescaled measures for each species, denoted by
\begin{equation*}
    \mu_{t}^{\zeta, j} := \frac{1}{\gamma} \nu_{t}^{\zeta, j} := \frac{1}{\gamma} \nu_{t}^{j}, \quad j = 1,\dots,J.
\end{equation*}
When $\gamma$ corresponds to Avogadro's number, $\mu_{t}^{\zeta, j}$ physically corresponds to the measure for which the associated density would represent the molar concentration field for species $j$ at time $t$ after integrating out velocity (but we will again abuse notation and refer to $\mu_t^{\zeta,j}$ as the molar concentration field to indicate it is the $\gamma$-scaled measure). We similarly let
\begin{equation*}
    \mu_{t}^{\zeta} := \frac{1}{\gamma} \nu_{t}^{\zeta} := \sum_{j=1}^{J} \mu_{t}^{\zeta,j} \delta_{S_{j}},
\end{equation*}
and define the vector of the molar concentrations for each species by
\begin{equation*}
    \vmu^{\zeta}_t := (\mu^{\zeta,1}_t,\dots,\mu^{\zeta,J}_t).
\end{equation*}

In the remainder, we will often write $N^{\zeta}(t) = \langle 1, \gamma \mu_t^{\zeta} \rangle$ and $N^{\zeta}_j(t) = \langle 1, \gamma \mu_t^{\zeta,j}\rangle$ to make explicit that $N$ and $N_j$ depend on $\zeta$.

We now introduce state vectors to store the positions and velocities of particles of a given type. Define the particle index maps $\{\sigma_{j}(k)\}_{k=1}^{N_{j}(t)}$, which encode a fixed ordering for the position and velocity combinations of individual particles of species $j$, $(Q,V)^{\sigma_{j}(1)} \preceq \cdots \preceq (Q,V)^{\sigma_{j}(N_{j}(t))}$, arising from an (assumed) fixed underlying ordering on $\mathbb{R}^{d}\times \Rd$. We slightly abuse notation and write $(Q,V)^{\sigma_{j}(k)}$ as $(Q^{\sigma_{j}(k)},V^{\sigma_{j}(k)}),$ for $k  = 1, \cdots, N_{j}(t)$. Following the notation established in \cite{Bansaye2015} (see Section 6.3 therein), we let $\mathbb{N}^{*}=\mathbb{N}\setminus\{0\}$ and let $H=(H^{1}_{(Q,V)},\cdots, H^{k}_{(Q,V)},\cdots ):\mathcal{M}(\mathbb{R}^d \times \Rd)\mapsto (\mathbb{R}^{d}\times\Rd)^{\mathbb{N}^{*}}$. With some abuse of notation, we often write $H^{k}_{(Q,V)}=(H^{k}_{Q},H^{k}_{V})$, with $H^{k}_{Q},H^{k}_{V}$ the corresponding projected position and velocity variables respectively such that
\begin{align}
    H_Q\bigl(\nu_{t}^{j}\bigr) &\coloneqq \bigl(Q_{t}^{\sigma_{j}(1)}, \cdots, Q_{t}^{\sigma_{j}(N_{j}(t))}, 0,0, \cdots\bigr),\nonumber\\
        H_V\bigl(\nu_{t}^{j}\bigr) &\coloneqq \bigl(V_{t}^{\sigma_{j}(1)}, \cdots, V_{t}^{\sigma_{j}(N_{j}(t))}, 0,0, \cdots\bigr).\nonumber
\end{align}

$H_Q(\nu^j_t)$ represents the position state vector for type $j$ particles, and $H_V(\nu^j_t)$ similarly represents the velocity state vector for type $j$ particles. We analogously let $H_Q^{i}(\nu_{t}^{j}) \in \mathbb{R}^{d}$ label the $i$th entry of the vector $H_Q(\nu_{t}^{j})$ and $H_V^{i}(\nu_{t}^{j}) \in \mathbb{R}^{d}$ label the $i$th entry of the vector $H_V(\nu_{t}^{j})$. Note that the zero entries after the $Q_{t}^{\sigma_{j}(N_{j}(t))}$ and the $V_{t}^{\sigma_{j}(N_{j}(t))}$ terms merely serve as placeholders. Using this notation, we will often write the marginal distribution of species $j$ as
\begin{align} \label{mu}
    \mu_{t}^{\zeta, j}(dx, dv)
    &= \frac{1}{\gamma} \sum_{i=1}^{N^{\zeta}_j(t)} \delta_{\left(H_Q^{i}(\gamma \mu_{t}^{\zeta, j}),H_V^{i}(\gamma \mu_{t}^{\zeta, j})\right)}(d x, d v),
    \quad j \in\{1, \ldots, J\}.
\end{align}

As commented in \cite{Bansaye2015}, this function $H$ allows us to address a notational issue. In particular, choosing a particle of a type $j$ uniformly among all particles in $\nu^j_t \in\mathcal{M}(\mathbb{R}^d \times \Rd)$ amounts to choosing an index uniformly in the set $\{1,\cdots,\langle 1,\nu_t^j \rangle \}$, and then choosing the individual particle from the arbitrary fixed ordering. As particles of the same type are assumed to be indistinguishable, there is no ambiguity in the value of \(H(\nu_{t}^{j})\) when two particles of type \(j\) have the same position and velocity.

Analogously to \cite{IsaacsonSIMA2022}, we now introduce a system of notation to encode the positions and velocities of reaction substrate and product particles, as well as configuration spaces to specify reaction processes later on. Comparing with \cite{IsaacsonSIMA2022}, the definitions below also account for the new velocity component.

\begin{definition}\label{D:indexSpace}
To describe the dynamics of $\nu_t$, we will sample vectors containing the indices of the specific substrate particles participating in a single $\ell$-type reaction from the substrate index space
$$
\mathbb{I}^{(\ell)}=(\mathbb{N} \backslash\{0\})^{|\alpha^{(\ell)}|}.
$$
For the allowable reactions considered in this work, we label the elements of $\mathbb{I}^{(\ell)}$ according to their species types:
\begin{enumerate}
    \item For $\mathcal{R}_{\ell}$ of the form $\varnothing \rightarrow \cdots$
    $$
\mathbb{I}^{(\ell)}=\varnothing.
$$
    \item For $\mathcal{R}_{\ell}$ of the form $S_j \rightarrow \cdots$
    $$
\mathbb{I}^{(\ell)}=\{i_1^{(j)}\in\mathbb{N}\backslash\{0\}\}.
$$
    \item For $\mathcal{R}_{\ell}$ of the form $S_j + S_k \rightarrow \cdots$ with $j<k$
        $$
\mathbb{I}^{(\ell)}=\{(i_1^{(j)},i_1^{(k)})\in (\mathbb{N}\backslash\{0\})^2\}.
$$
    \item For $\mathcal{R}_{\ell}$ of the form $2S_j \rightarrow \cdots$
           $$
\mathbb{I}^{(\ell)}=\{(i_1^{(j)},i_2^{(j)})\in (\mathbb{N}\backslash\{0\})^2\}.
$$
\end{enumerate}
We write a particular sampled set of substrate indices $\bm{i}\in \mathbb{I}^{(\ell)}$ as
$$
\bm{i}=\bigl(i_{1}^{(1)}, \cdots, i_{\alpha_{\ell 1}}^{(1)}, \cdots, i_{1}^{(J)}, \cdots, i_{\alpha_{\ell J}}^{(J)}\bigr).
$$
\end{definition}

\begin{definition}\label{D:substrateSpace} We define the substrate particle position and velocity product space analogously to $\mathbb{I}^{(\ell)}$ as
$$\mathbb{X}^{(\ell)} \otimes \mathbb{V}^{(\ell)} \in (\mathbb{R}^d \times \Rd)^{|\alpha^{(\ell)}|},
$$
with $\xv \supseteq (\bm{x},\bm{v}) =\bigl((x_{1}^{(1)},{v}_{1}^{(1)}), \cdots, (x_{\alpha_{\ell 1}}^{(1)},{v}_{\alpha_{\ell 1}}^{(1)}), \cdots, (x_{1}^{(J)},{v}_{1}^{(J)}), \cdots, (x_{\alpha_{\ell J}}^{(J)},{v}_{\alpha_{\ell J}}^{(J)})\bigr)$. For $(\bm{x},\bm{v}) \in \mathbb{X}^{(\ell)} \otimes \mathbb{V}^{(\ell)}$, a sampled substrate position and velocity configuration for one individual $\mathcal{R}_{\ell}$ reaction, $x_{r}^{(j)}$ then labels the sampled position for the $r$th substrate particle of species $j$ involved in the reaction, and ${v}_{r}^{(j)}$ labels the sampled velocity for the same substrate particle. Let $d \bm{x}d \bm{v}=\bigl(\bigwedge_{j=1}^{J}(\bigwedge_{r=1}^{\alpha_{\ell j}} d x_{r}^{(j)}d v_{r}^{(j)})\bigr)$ be the corresponding volume form on $\mathbb{X}^{(\ell)}\otimes \mathbb{V}^{(\ell)}$ which also naturally defines an associated Lebesgue measure.
\end{definition}

\begin{definition}\label{D:productSpace}For reaction $\mathcal{R}_{\ell}$ with $\tilde{L}+1 \leq \ell \leq L$, i.e., having at least one product particle, define the product position and velocity product space analogously to $\yv$,
$$
\mathbb{Y}^{(\ell)} \otimes \mathbb{V'}^{(\ell)}\in (\mathbb{R}^d \times \Rd)^{|\beta^{(\ell)}|},
$$
where we write
\begin{align*}
\yv & \supseteq (\bm{y},\bm{v'}) =\bigl((y_{1}^{(1)},{v'}_{1}^{(1)}), \cdots, (y_{\beta_{\ell 1}}^{(1)},{v'}_{\beta_{\ell 1}}^{(1)}), \cdots, (y_{1}^{(J)},{v'}_{1}^{(J)}), \cdots, (y_{\beta_{\ell J}}^{(J)},{v'}_{\beta_{\ell J}}^{(J)})\bigr).
\end{align*}

 For $(\bm{y},\bm{v'}) \in \mathbb{Y}^{(\ell)} \otimes \mathbb{V'}^{(\ell)}$ a sampled product position and velocity configuration for one individual $\mathcal{R}_{\ell}$ reaction, $y_{r}^{(j)}$ then labels the sampled position for the $r$th product particle of species $j$ involved in the reaction, and ${v'}_{r}^{(j)}$ labels the sampled velocity for the same product particle. Let $d \bm{y} d\bm{v'}=\bigl(\bigwedge_{j=1}^{J}(\bigwedge_{r=1}^{\beta_{\ell_{j}}} d y_{r}^{(j)}d {v'}_{r}^{(j)})\bigr)$ be the corresponding volume form on $\yv$, which also naturally defines an associated Lebesgue measure.
\end{definition}

\begin{definition}\label{D:projectionMapping}Consider a fixed reaction $\mathcal{R}_{\ell}$, with $\bm{i}\in \mathbb{I}^{(\ell)}$ and $\nu$ corresponding to a fixed particle distribution given by \eqref{ncen} with representation \eqref{eq:densitymeasdefmargrep}. We define the $\ell$th projection mapping \textcolor{black}{$\mathcal{P}^{(\ell)} :   \mathcal{M}(\hat{P})\times  \mathbb{I}^{(\ell)} \rightarrow  \mathbb{X}^{(\ell)} \otimes \mathbb{V}^{(\ell)}$} as
$$
\mathcal{P}^{(\ell)}(\nu, \bm{i})=\bigl(H^{i_{1}^{(1)}}(\nu^{1}), \cdots, H^{i_{\alpha_{\ell 1}}^{(1)}}(\nu^{1}), \cdots, H^{i_{1}^{(J)}}(\nu^{J}), \cdots, H^{i_{\alpha_{\ell J}}^{(J)}}(\nu^{J})\bigr).
$$
When substrates with indices $\bm{i}$ in particle distribution $\nu$ are chosen to undergo a reaction of type $\ell, \mathcal{P}^{(\ell)}(\nu, \bm{i})$ then gives the vector of the corresponding substrate particles' positions and velocities. For simplicity of notation, in the remainder, we will sometimes evaluate $\mathcal{P}^{(\ell)}$ with inconsistent particle distributions and index vectors. In all of these cases the inconsistency will occur in terms that are zero, and hence not matter in any practical way.
\end{definition}

\begin{definition}\label{omega} Consider a fixed reaction $\mathcal{R}_{\ell}$, with $\nu$ a fixed particle distribution given by \eqref{ncen} with representation \eqref{eq:densitymeasdefmargrep}. Using the notation of Definition \ref{D:indexSpace}, we define the allowable substrate index sampling space $\Omega^{(\ell)}(\nu) \subset \mathbb{I}^{(\ell)}$ as
$$
\Omega^{(\ell)}(\nu)= \begin{cases}\varnothing, & |\bm{\alpha}^{(\ell)}|=0, \\ \{\bm{i}=i_{1}^{(j)} \in \mathbb{I}^{(\ell)} | i_{1}^{(j)} \leq \langle 1, \nu^{j}\rangle\}, & |\bm{\alpha}^{(\ell)}|=\alpha_{\ell j}=1, \\ \{i=\bigl(i_{1}^{(j)}, i_{2}^{(j)}\bigr) \in \mathbb{I}^{(\ell)} | i_{1}^{(j)}<i_{2}^{(j)} \leq\langle 1, \nu^{j}\rangle\}, & |\bm{\alpha}^{(\ell)}|=\alpha_{\ell j}=2, \\ \{i=(i_{1}^{(j)}, i_{1}^{(k)}) \in \mathbb{I}^{(\ell)} | i_{1}^{(j)} \leq\langle 1, \nu^{j}\rangle, i_{1}^{(k)} \leq\langle 1, \nu^{k}\rangle\}, & |\bm{\alpha}^{(\ell)}|=2, \alpha_{\ell j}=\alpha_{\ell k}=1, j<k .\end{cases}
$$
Note that in the calculations that follow $\Omega^{(\ell)}(\nu)$ will change over time due to the fact that $\nu=\nu_{t}$ changes over time, but this will not be explicitly denoted for notational convenience.
\end{definition}

\begin{definition}\label{D:6} Consider a fixed reaction $\mathcal{R}_{\ell}$, with $\nu$ any element of $M(\hat{P})$ with the representation \eqref{eq:densitymeasdefmargrep}. We define the $\ell$th substrate measure mapping $\lambda^{(\ell)}[\cdot]: M(\hat{P}) \rightarrow M(\xv)$ evaluated at $(\bm{x},\bm{v}) \in \xv$ via $\lambda^{(\ell)}[\nu](dx, dv)= \otimes_{j=1}^{J}\bigl(\otimes_{r=1}^{\alpha_{\ell j}} \nu^{j}(d x_{r}^{(j)},d v_{r}^{(j)})\bigr).$ \label{lambda}
\end{definition}

\begin{definition}\label{xtilde}
For reaction $\mathcal{R}_{\ell}$, define subspaces $\tilde{\mathbb{X}}^{(\ell)}\subset \mathbb{X}^{(\ell)}$ and $\tilde{\mathbb{V}}^{(\ell)}\subset \mathbb{V}^{(\ell)}$ by removing all particle substrate position and velocity vectors in $\mathbb{X}^{(\ell)}$ and $\mathbb{V}^{(\ell)}$ respectively for which two particles of the same species have the same position or the same velocity. That is
\begin{align*}
\tilde{\mathbb{X}}^{(\ell)}&=\mathbb{X}^{(\ell)} \backslash\{\bm{x}
 \in \mathbb{X}^{(\ell)} | x_{r}^{(j)}=x_{k}^{(j)} \textrm { for some } 1 \leq j \leq J, 1 \leq k \neq r \leq \alpha_{\ell j}\},\nonumber\\
\tilde{\mathbb{V}}^{(\ell)}&=\mathbb{V}^{(\ell)} \backslash\{\bm{v}
 \in \mathbb{V}^{(\ell)} | v_{r}^{(j)}=v_{k}^{(j)} \textrm { for some } 1 \leq j \leq J, 1 \leq k \neq r \leq \alpha_{\ell j}\}.\nonumber
\end{align*}

\end{definition}

\section{Assumptions}\label{S:MainAssumptions}

In this section we state the main assumptions of this work. The main result then follows in Section \ref{S:MainResult}.
\begin{assumption}\label{A:molarBdd}We assume that the total (molar) population concentration satisfies $\sum_{j=1}^{J}\langle 1, \mu_{t}^{\zeta, j}\rangle \newline \leq C_{\circ}$ for all $t<\infty$, i.e., is uniformly in time bounded by some constant $C_{\circ}<\infty$. To simplify constants in later proofs, we further take $C_{\circ} > 1$.
\end{assumption}

Assumption \ref{A:molarBdd} essentially implies that the reaction networks we consider have bounded concentration globally in time. In this paper, we restrict attention to those networks. Reaction networks that satisfy this assumption include the networks $A+B\leftrightarrows C+D$ and $A+B\leftrightarrows C$, see \cite{IsaacsonSIMA2022,Nolen2019,VeberAnnals2023}, but it is clear that not all reactions would satisfy such a property (for instance $A\rightarrow 2A$ does not satisfy it). It is a hard, but very interesting open research question to characterize the class of reaction networks that maintain the global-in-time well posedness.

\begin{assumption}\label{A:initialconvergence}
We assume that for all $1 \leq j \leq J$, the initial distribution $\mu_{0}^{\zeta, j} \rightarrow \xi_{0}^{j}$ weakly as $\zeta \rightarrow 0$, where $\xi_{0}^{j}$ is a compactly supported measure with finite mass.
\end{assumption}

Associated with the $\ell$th reaction are two functions, a reaction rate kernel $K_{\ell}(\bm{x},\bm{v})$ and a placement density $m_{\ell}(\bm{y},\bm{v'}|\bm{x},\bm{v})$. $K_{\ell}(\bm{x},\bm{v})$ encodes the probability per time substrates with space-velocity configuration $(\bm{x},\bm{v}) \in \mathbb{X}^{(\ell)} \otimes \mathbb{V}^{(\ell)}$ will react. $m_{\ell}(\bm{y},\bm{v'}|\bm{x},\bm{v})$ encodes the probability density for the reaction products to be placed at space-velocity configuration $(\bm{y},\bm{v'}) \in \mathbb{Y}^{(\ell)} \otimes \mathbb{V'}^{(\ell)}$ given that the substrates had space-velocity configuration $(\bm{x},\bm{v}) \in \mathbb{X}^{(\ell)} \otimes \mathbb{V}^{(\ell)}$. We now state assumptions on these two functions.
 \begin{assumption}\label{A:kernalBdd}
  We assume that for all $1 \leq \ell \leq L$, the reaction rate kernel $K_{\ell}(\bm{x},\bm{v})$ is uniformly bounded for all $(\bm{x},\bm{v}) \in \xv$, and denote generic constants dependent upon this bound by $C(K)$.
 \end{assumption}

Studying the large population limit, leads us to identify physical ways in which the reaction kernels depend on the large system parameter $\gamma$. We follow here the intuition developed in Appendix A of \cite{IsaacsonSIMA2022} and we require that the formal well mixed (i.e., infinitely fast diffusion) limit of the purely-diffusive volume reactivity particle-based stochastic reaction-diffusion model matches the corresponding classical spatially homogeneous stochastic chemical kinetic model. We refer the interested reader to Appendix A of \cite{IsaacsonSIMA2022} for the mathematical details. These considerations lead to Assumption \ref{A:KernelScaling}.
 \begin{assumption}\label{A:KernelScaling}
  The reaction kernel is assumed to have the explicit $\gamma$ dependence that
$$
K_{\ell}^{\gamma}(\bm{x},\bm{v})=\gamma^{1-|\bm{\alpha}^{(\ell)}|} K_{\ell}(\bm{x},\bm{v})
$$
for any $(\bm{x},\bm{v}) \in \xv, 1 \leq \ell \leq L$. 
 \end{assumption}

 \begin{assumption}\label{A:PlacementDensity}
We assume that for any $\eta \geq 0, 1 \leq \ell \leq L, (\bm{y},\bm{v'}) \in \yv$ and $(\bm{x},\bm{v}) \in \xv$, the placement density $m_{\ell}^{\eta}(\bm{y},\bm{v'}|\bm{x},\bm{v})$ is \textcolor{black}{uniformly bounded in $(\bm{x},\bm{v})$ and $(\bm{y},\bm{v'})$, and is a probability density in $(\bm{y},\bm{v'})$,} i.e., $\int_{\yv} m_{\ell}^{\eta}(\bm{y},\bm{v'}|\bm{x},\bm{v}) d \bm{y} d\bm{v'}=1$. \label{m}
\end{assumption}

To define placement densities $m_{\ell}^{\eta}(\bm{y}, \bm{v'}|\bm{x}, \bm{v})$ in terms of delta-functions in a mathematically rigorous way, we introduce the displacement (i.e., smoothing) range parameter $\eta$ in order to mollify the limiting Dirac delta densities $m_{\ell}(\cdot, \cdot | \bm{x},\bm{v})$ in the standard way.

 \begin{definition}
 For $x \in \mathbb{R}^{d}$, let $G(x)$ denote a standard positive mollifier and $G_{\eta}(x)=\eta^{-d} G(x / \eta)$. That is, $G(x)$ is a smooth function on $\mathbb{R}^{d}$ satisfying the following four requirements
 \begin{enumerate}
     \item $G(x) \geq 0$;
     \item $G(x)$ is compactly supported in $B(0,1)$, the unit ball in $\mathbb{R}^{d}$;
     \item $\int_{\mathbb{R}^{d}} G(x) d x=1$;
     \item $\displaystyle\lim_{\eta \rightarrow 0} G_{\eta}(x)=\displaystyle\lim_{\eta \rightarrow 0} \eta^{-d} G(x / \eta)=\delta_{0}(x)$, where $\delta_{0}(x)$ is the Dirac delta function and the limit is taken in the space of Schwartz distributions.
 \end{enumerate}
 \end{definition}

The allowable forms of the placement density for each possible reaction are summarized below. The main principle of the placement rule is that the products of a reaction are placed based on the line connecting the substrates, with their velocities determined by enforcing the conservation of total momentum. Although other choices are possible, such as the exchange of momentum, conservation of kinetic energy, etc., and can be included in our theory, we focus the discussion here on this specific rule as stated in Assumption \ref{A:PlacementDensity} due to its concreteness and physical relevance.
   \begin{assumption}\label{A:PlacementDensity}
The distributional limit of $m_{\ell}^{\eta}(\bm{y},\bm{v'}|\bm{x},\bm{v})$ as $\eta \rightarrow 0$ is given by $m_{\ell}(\bm{y},\bm{v'}|\bm{x},\bm{v})$, a linear combination of Dirac delta functions, for any $(\bm{y},\bm{v'}) \in \yv$ and $(\bm{x},\bm{v}) \in \xv$. Below we summarize the specific forms of these placement densities for each possible reaction type. Note that $m_i$ will denote the mass of the particle with position-velocity $(x_i,v_i)$ below. The specific forms of the velocity kernels are motivated by our work in~\cite{SamChenLanlanKostas2025}, where we established a set of kernels for the types of reactive Langvein dynamics models we consider here that both satisfy detailed balance for reversible reactions, and are consistent in the over-damped limit with the corresponding kernels for the purely diffusive volume reactivity particle-based stochastic reaction-diffusion model studied in~\cite{IsaacsonSIMA2022}. See~\cite{SamChenLanlanKostas2025} for further details and discussion on the motivation for these choices.
\begin{enumerate}
    \item For a first order reaction $\mathcal{R}_{\ell}$ of the form $S_i \rightarrow S_j$,
 we assume that the placement density $m_{\ell}^{\eta}(y,v' | x,v)$ takes the mollified form of
$$
m_{\ell}^{\eta}(y,v' | x,v)=G_{\eta}(y-x)G_{\eta}(v'-v),
$$
with the distributional limit as $\eta \rightarrow 0$ given by
$$
m_{\ell}(y,v' | x,v)=\delta_{x}(y)\delta_{v}(v').
$$
\item For a first-order reaction $\mathcal{R}_{\ell}$ of the form $S_i \rightarrow S_j + S_k$,  we assume that the unbinding displacement density $m_{\ell}^{\eta}(y_1, y_2,v_1',v_2' | x,v)$ is in the mollified form of
\begin{align*}
m_{\ell}^{\eta}(y_1, y_2,v_1',v_2' | x,v)&=\rho\left(|y_1-y_2|,|v_1'-v_2'|\right) \sum_{i=1}^{I} p_{i} G_{\eta}\left(x-\left(\alpha_{i} y_1+(1-\alpha_{i}) y_2\right)\right)\\
&\phantom{=}\times G_{\eta}\left(v-\frac{m_1 v_1'+ m_2 v_2'}{m_3}\right),
\end{align*}
where $m_3$ denotes the mass of the substrate $(x,v)$ particle.
The distributional limit as $\eta \rightarrow 0$ is given by
\begin{align*}
m_{\ell}(y_1, y_2,v_1',v_2' | x,v)&=\rho\left(|y_1-y_2|,|v_1'-v_2'|\right) \sum_{i=1}^{I} p_{i} \delta\left(x-\left(\alpha_{i} y_1+(1-\alpha_{i}) y_2\right)\right)\\
&\phantom{=}\times\delta\left(v-\frac{m_1 v_1'+ m_2 v_2'}{m_3}\right),
\end{align*}
with $p_i, \alpha_i \in [0,1],$ for $i \in \{1,\cdots,I\}$ and $\displaystyle\sum_{i=1}^{I} p_{i}=1$.
Here we assume the spatial separation and velocity difference of the product $S_{j}$ and $S_{k}$ particles, $|y_1 - y_2|$ and $|v_1'-v_2'|$ are sampled from the joint probability density $\rho(|y_1 - y_2|,|v_1'-v_2'|)$.
\item For a second order reaction $\mathcal{R}_{\ell}$ of the form $S_i + S_k \rightarrow S_j,$ we assume that the binding placement density $m_{\ell}(y, v'| x_1, x_2,v_1,v_2)$ takes the mollified form of
$$
m_{\ell}^{\eta}(y, v'| x_1, x_2,v_1,v_2)=\sum_{i=1}^{I} p_{i} G_{\eta}\left(y-\left(\alpha_{i} x_1+(1-\alpha_{i}) x_2\right)\right)G_{\eta}\left(v'-\frac{m_1 v_1+ m_2 v_2}{m_3}\right),
$$
where $m_3$ denotes the product particle's mass. The distributional limit as $\eta \rightarrow 0$ is given by
$$
m_{\ell}(y, v'| x_1, x_2,v_1,v_2)=\sum_{i=1}^{I} p_{i} \delta\left(y-\left(\alpha_{i} x_1+(1-\alpha_{i}) x_2\right)\right)\delta \left(v'-\frac{m_1 v_1+ m_2 v_2}{m_3}\right),
$$
with $p_i, \alpha_i \in [0,1],$ for $i \in \{1,\cdots,I\}$ and $\displaystyle\sum_{i=1}^{I} p_{i}=1$.


\item For a second order reaction $\mathcal{R}_{\ell}$ of the form $S_i + S_k \rightarrow S_j + S_r$,we assume that the placement density $m_{\ell}(y_1,y_2,v'_1,v'_2 | x_1, x_2,v_1,v_2)$ takes the mollified form of
\begin{align*}
    &m_{\ell}^{\eta}(y_1,y_2,v'_1,v'_2 | x_1, x_2,v_1,v_2)\\=&(m_3 + m_4)^{d} \rho(|v_1' - v_2'|)G_{\eta}(m_3 v_1' + m_4 v_2' - m_1 v_1 - m_2 v_2)\\
&\phantom{=} \times \biggl[p \times G_{\eta}(x_1-y_1) G_{\eta}(x_2-y_2)+(1-p) \times G_{\eta}(x_1-y_2) G_{\eta}(x_2-y_1)\biggr],
\end{align*}
for $m_3$ and $m_4$ denoting the masses of the product particles. The distributional limit as $\eta \rightarrow 0$ is given by
\begin{align*}
&m_{\ell}(y_1,y_2,v'_1,v'_2 | x_1, x_2,v_1,v_2)\\=&(m_3 + m_4)^{d} \rho(|v_1' - v_2'|)\delta(m_3 v_1' + m_4 v_2' - m_1 v_1 - m_2 v_2)\\
&\phantom{=} \times \biggl[p \times \delta_{(x_1, x_2)}\left((y_1, y_2)\right)+(1-p) \times \delta_{(x_1, x_2)}\left((y_2, y_1)\right)\biggr],
\end{align*}
with $p \in [0,1].$
Here we assume the velocity difference of the product $S_{j}$ and $S_{r}$ particles, $|v_1'-v_2'|$ are sampled from the marginal probability density $\rho(|v_1'-v_2'|)$, which satisfies Assumption \ref{A:AssumptionRho}.
\end{enumerate}
 \end{assumption}

In order for Assumptions \ref{A:PlacementDensity} to be compatible, we now further impose Assumption \ref{A:AssumptionRho} on the joint and marginal probability densities. We slightly abuse notation and denote both the joint probability density and the marginal probability density with $\rho$.

\begin{assumption}\label{A:AssumptionRho}
We abuse notation and let $\rho$ denote either the joint probability density $\rho(w,u)$ on $\R^d\times\R^d$ or the velocity only probability density $\rho(u)$ on $\R^d$ as in Assumption~\ref{A:PlacementDensity}, depending on the given reaction.  In each case we assume that $\rho$  is even and is normalized to one, i.e.,
\[
\int_{\R^d\times\R^d}\rho(w,u)dwdu = 1,
\qquad
\int_{\R^d}\rho(u)du = 1.
\]
Additionally, since $\rho$ is a probability density, the previous condition implies that the tail probability $\rho$
\[
\int_{|u|>R}\rho(u)du < \epsilon,
\qquad
\int_{|w|+|u|>R}\rho(w,u)dwdu < \epsilon,
\]for any $\epsilon >0$ when $R$ is chosen sufficiently large. Finally, we assume a finite fourth moment
\[
\int_{\R^d\times\R^d}\bigl(|w|^4 + |u|^4\bigr)\rho(w,u)dwdu<\infty.
\]
\end{assumption}

 \begin{assumption} \label{A:conservationofMomentum}
     For the reversible reaction $\mathcal{R}_{\ell}$ of the form $S_i + S_k \leftrightarrows S_j,$ we assume conservation of mass,
$m_i + m_k = m_j$,
and conservation of momentum,
$m_i v_i + m_k v_k = m_j v_j$. These properties ensure that the product particle is located on the line connecting the two substrates and that the magnitude of its velocity is bounded by the magnitudes of the velocities of the two substrates. We use the two conservation assumptions in the proof of Lemma \ref{L:boundoffm}.
For general bimolecular reactions $S_i + S_k \leftrightarrows S_j + S_r$, we impose conservation of mass $m_i + m_k = m_j + m_r$ and conservation of momentum $m_i v_{i}+ m_k v_{k}= m_j v_{j}+ m_r v_{r}$.
 \end{assumption}

\begin{assumption}\label{A:AssumptionMomentFiniteness}
We assume that $\sum_{j=1}^{J}\mathbb{E}\la |x|^{4} +|v|^{4},\mu_{t}^{\zeta, j}(dx,dv)\ra<\infty$ for $t \in [0,T]$. For $t=0$ specifically, we assume that $\displaystyle\sup_{\zeta\in\mathbb{R}_{+} \times \mathbb{R}_{+}}\sum_{j=1}^{J}\mathbb{E}\la |x|^{4} +|v|^{4},\mu_{0}^{\zeta, j}(dx,dv)\ra<C<\infty$.
\end{assumption}

We note here that Assumption \ref{A:AssumptionMomentFiniteness} is slightly stronger than it needs to be. Instead of $4-$moment bounds, we only need $(2+\epsilon)-$moment bounds, for some $\epsilon>0$. Without loss of generality, we, however, proceed with Assumption \ref{A:AssumptionMomentFiniteness} as it makes the presentation of the arguments easier and leads to less notation cluttering.

\begin{definition}
For a complete measurable space $E$, we define the variation norm of finite measures $\|\cdot\|_{M_{F}(E)}$ on $M_{F}(E)$ as
$$\|\nu\|_{M_{F}(E)}=\displaystyle\sup_{f \in L^{\infty}(E),\|f\|_{L^{\infty}} \leq 1}|\langle f, \nu\rangle_{E}|.$$ One can show via a density argument that an equivalent formulation is (see step 4 of Theorem 3.2 of \cite{Jourdain2012}) $$\|\nu\|_{M_{F}(E)}=\displaystyle\sup _{f \in C_{b}^{2}(E),\|f\|_{L^{\infty}} \leq 1}|\langle f, \nu\rangle_{E}|.$$
\end{definition}

\section{Main Result on the Mean Field Limit}\label{S:MainResult}
We recall by (\ref{E:LD0}) that, in the Langevin dynamics-based (LD) models and in the absence of reactions, each particle moves according to
\begin{align*}
    dQ^i_t &= V_t^i dt,\\
    dV_t^i &= -b^{i} V_t^i dt + b^{i} \sqrt{2D^i} dW_t^i. \numberthis \label{E:LD}
\end{align*}
In addition, in our model, molecules can undergo $L$ possible reactions, denoted as $\mathcal{R}_{1}, \cdots, \mathcal{R}_{L}$. Without loss of generality, we will examine the mean field limit for up to second-order reactions with products, though for simplification of the exposition we will only consider the four reaction types summarized in Assumption \ref{A:PlacementDensity} and ignore zero order reactions.

Consider the empirical measure $\nu_t^\zeta = \sum_{i=1}^{N^\zeta(t)}\delta_{Q^{i}_{t},V^{i}_{t}} \dpi.$ We now formulate a weak representation for the time evolution of scaled empirical measures $\mu_{t}^{\zeta, j} = \frac{1}{\gamma}\ntzj$, $j=1, \dots, J$. Recall that $\HQ,\HV $ denote the projection of $H^{i}_{Q,V}(\gamma\mu_{s^-}^{\zeta, j})$ to the position and velocity components of the $i$th particle of species $j$ respectively. We need to specify the noise terms that drive the particles' random motion and reactions. Let $\{W_t^n\}_{n \in \mathbb{N}_+}$ be a countable collection of standard independent Brownian motions in $\mathbb{R}^d$ that will drive particles' random velocity dynamics. Associated with the $\ell$th reaction, $\mathcal{R}^\ell$, we have a Poisson random measure $dN_{\ell}(s,\bm{i},\bm{y},\bm{v'},\theta_1,\theta_2)$ on $\mathbb{R}_{+} \times \mathbb{I}^{(\ell)} \times \yv \times \mathbb{R}_{+}^2$ with intensity measure $d\bar{N}(s,\bm{i},\bm{y},\bm{v'},\theta_1,\theta_2)  = ds  \left(\bigwedge_{j=1}^J \bigwedge_{r = 1}^{\alpha_{\ell j}} (\sum_{k \geq 0} \delta_k(i_r^{(j)}))\right) d\bm{i} \, d\bm{y} \, d\bm{v'} \, d\theta_1 \, d\theta_2$. Here $\mathbb{I}^{(\ell)}$ is the set of all possible substrate particle index combinations for reaction $\mathcal{R}_{\ell}$, and $\bm{i}$ denotes a specific substrate particle index combination in $\mathbb{I}^{(\ell)}$. $s$ denotes time, and $(\bm{y},\bm{v}')$ the product particle positions and velocities. The Poisson random measure $N_{\ell}$ encodes the stochasticity in the reaction dynamics of $\mathcal{R}_{\ell}$, with $\theta_1$ and $\theta_2$ being used to determine whether a reaction event occurs based on the reaction kernel and placement density respectively. We will later make use of the martingales, $d\tilde{N}_{\ell}(s,\bm{i},\bm{y},\bm{v'},\theta_1,\theta_2) = dN_{\ell}(s,\bm{i},\bm{y},\bm{v'},\theta_1,\theta_2) - d\bar{N}(s,\bm{i},\bm{y},\bm{v'},\theta_1,\theta_2)$, in the proof of the main Theorem~\ref{T:MainTheorem}.

For a test function $f(x_i,v_i) \in C^2_b(\Rd \times \Rd),$ and for each species $j=1, \ldots, J$, a weak representation of the dynamics of $\mtzj$ is given by
\begin{align*}
 \left\langle f, \mu_{t}^{\zeta, j}\right\rangle
&= \fmi + \ginverse \sum_{i\geq1} \int_{0}^{t} \mathbbm{1}_{\{i \leq \all\}}b_{j}\sqrt{2 D_{j}} \pfpv\left(\HQ,\HV \right) d W_{s}^{i}\\
&\phantom{=} +\ginverse \int_0^t \sumall \biggl[ \HV \pfpq \left(\HQ,\HV \right)  \\
&\phantom{=} -b_{j} \HV \pfpv \left(\HQ,\HV \right) + b_{j}^2 D_j \ppfppv \left(\HQ,\HV \right) \biggr] ds\\
&\phantom{=} +\sum_{\ell=1}^{L} \int_{0}^{t} \int_{\mathbb{I}^{(\ell)}} \int_{\yv} \int_{\mathbb{R}_{+}^2}\biggl(\left\langle f, \mu_{s^-}^{\zeta, j}-\frac{1}{\gamma} \sum_{r=1}^{\alpha_{\ell j}} \delta_{H^{i_{r}^{(j)}}_Q(\gamma \mu_{s^-}^{\zeta, j})}\delta_{H^{i_{r}^{(j)}}_V(\gamma \mu_{s^-}^{\zeta, j})}+\ginverse\sum_{r=1}^{\beta_{\ell j}}\delta_{y_{r}^{(j)}}\delta_{{v'}_{r}^{(j)}} \right\rangle\\
&\phantom{=} -\left\langle f, \mu_{s^-}^{\zeta, j}\right\rangle\biggr)\times \mathbbm{1}_{\{\bm{i} \in \Omega^{(\ell)}(\gamma\mu_{s^-}^{\zeta})\}} \times \mathbbm{1}_{\{\theta_1 \leq K_{\ell}^{\gamma}\left(\bm{x},\bm{v}\right)\}} \times \mathbbm{1}_{\{\theta_2 \leq m_{\ell}^{\eta}\left(\bm{y},\bm{v'}|\bm{x},\bm{v}\right)\}}d N_{\ell}(s,\bm{i},\bm{y},\bm{v'},\theta_1,\theta_2).\numberthis\label{Eq:PathDescription}
\end{align*}

For a test function $f \in C_{b}^{2}(\Rd \times \Rd)$ and for each species $j=1, \ldots, J$, we define the following generator for the diffusion of a particle of type $i$ at position $x_i$ with velocity $v_i$
\begin{align}
(\mathcal{L}_{i}f)(x_i,v_i)&\coloneqq v_i \nabla_{x_i}f(x_i,v_i) - b_{i} v_i \nabla_{v_i}f(x_i,v_i) + b_{i}^2 D_i \Delta_{v_i} f(x_i,v_i),\label{Eq:Operators}
\end{align}
and the corresponding formal adjoint operator,
\begin{align}
    (\mathcal{L}^*_if)(x_i,v_i)&\coloneqq -v_i \nabla_{x_i}f(x_i,v_i) + b_{i} \nabla_{v_i}\cdot [v_i f(x_i,v_i)  + b_{i} D_i \nabla_{v_i}f(x_i,v_i)].\label{Eq:AdjointOperators}
\end{align}

In this paper, we prove convergence of the measure-valued processes $\{\mu_{t}^{\zeta, j}\}_{t \in[0, T]}, j=1,2, \cdots, J$ as $\zeta \to 0$ over an appropriate space of c\`{a}dl\`{a}g measure-valued processes. Before stating the main result of this paper, Theorem \ref{T:MainTheorem}, let us briefly motivate its conclusion. In the following we abuse notation by assuming $\mathcal{P}^{(\ell)}(\gamma \mu_{s-}^{\zeta}, i)$ produces particle state arguments consistent for the context in which it is being used (i.e. $(\bm{x},\bm{v})$ in some contexts and interlaced particle configuration states, $(x_1,v_1, \cdots, x_{\alpha_{\ell J}}, v_{\alpha_{\ell J}})$, in other contexts). By taking expectation on \eqref{Eq:PathDescription}, and proper rewriting,  we have
   \begin{align*}
    \mathbb{E}&\left[\left\langle f, \mu_{t}^{\zeta, j}\right\rangle\right]
      =\mathbb{E}\left[\fmi\right]\nonumber\\
      &+\mathbb{E}\biggr[\ginverse \int_0^t \sumall \bigg< \biggl( v \pfpq \left(x,v \right) -b_{j} v \pfpv \left(x,v \right)+ b_{j}^2 D_j \ppfppv \left(x,v \right) \biggr),\\
      &\phantom{=}  \qquad \qquad \qquad \qquad\qquad\qquad \qquad \qquad \qquad \qquad \qquad \qquad \delta_{H^{i}_Q(\gamma \mu_{s^-}^{\zeta, j})}(dx)\delta_{H^{i}_V(\gamma \mu_{s^-}^{\zeta, j})}(dv)\bigg> ds\biggl]\\
      &\phantom{=} + \sum_{\ell=1}^{L}\mathbb{E}\biggl[\int_0^t\int_{\yv} \ginverse \displaystyle \sum_{\bm{i} \in \Omega^{(\ell)}(\gamma\mu_{s^-}^{\zeta})}\left(-\sum_{r=1}^{\alj}f\left(\HQirj,\HVirj\right)+\sum_{r=1}^{\beta_{\ell j}}f(y_{r}^{(j)},{v'}_{r}^{(j)}) \right)\\
      &\phantom{=} \qquad\qquad\qquad \qquad \qquad \qquad \qquad \times K_{\ell}^{\gamma}\left(\pl\right)m_{\ell}^{\eta} \left(\bm{y},\bm{v'}|\pl \right)d\bm{y} d\bm{v'}ds\biggr]\\
      &=\mathbb{E}\left[\fmi\right]+\mathbb{E}\left[\int_0^t \left\langle (\mathcal{L}_{j}f)(x,v), \msmzj(dx,dv)\right\rangle ds\right]\\
      &\phantom{=} - \sum_{\ell=1}^{L}\mathbb{E}\biggl[\int_0^t\ginverse \displaystyle \sum_{\bm{i} \in \Omega^{(\ell)}(\gamma\mu_{s^-}^{\zeta})}K_{\ell}^{\gamma}\left(\pl\right)\left(\sum_{r=1}^{\alj}f\left(\HQirj,\HVirj\right)\right)ds \biggr]\\
      &\phantom{=} + \sum_{\ell=1}^{L}\mathbb{E}\biggl[\int_0^t \ginverse \sum_{\bm{i} \in \Omega^{(\ell)}(\gamma\mu_{s^-}^{\zeta})} K_{\ell}^{\gamma}\left(\pl\right) \biggl( \int_{\yv} \displaystyle \left(\sum_{r=1}^{\beta_{\ell j}}f(y_{r}^{(j)},{v'}_{r}^{(j)}) \right)\\
      &\phantom{=} \times m_{\ell}^{\eta} \left(\bm{y},\bm{v'}|\pl \right)d\bm{y} d\bm{v'}\biggr)ds\biggr]\\
      &= \mathbb{E}\left[\fmi\right] + \mathbb{E}\left[\int_0^t \left\langle (\mathcal{L}_{j}f)(x,v), \msmzj(dx,dv)\right\rangle ds\right]\\
      &\phantom{=} - \sum_{\ell=1}^{L}\mathbb{E}\biggl[\int_0^t \ginverse \int_{\xv}\displaystyle \sum_{\bm{i} \in \Omega^{(\ell)}(\gamma\mu_{s^-}^{\zeta})}\klg(\bm{x},\bm{v})\left(\sum_{r=1}^{\alj}f\left(x_r^{(j)},v_r^{(j)}\right)\right)\delta_{\pl}(d\bm{x}, d\bm{v})ds\biggr]\\
      &\phantom{=} + \sum_{\ell=1}^{L}\mathbb{E}\biggl[\int_0^t \ginverse \int_{\xv}\displaystyle \sum_{\bm{i} \in \Omega^{(\ell)}(\gamma\mu_{s^-}^{\zeta})}\klg(\bm{x},\bm{v})\biggl(\int_{\yv}\left(\sum_{r=1}^{\blj}f\left(y_r^{(j)},{v'}_r^{(j)}\right)\right)\\
&\phantom{=}\times m_{\ell}^{\eta}\left(\bm{y},\bm{v'}|\bm{x},\bm{v})\right)d\bm{y}d\bm{v'}\biggr)\delta_{\pl}(d\bm{x}, d\bm{v})ds\biggr]\\
            &= \mathbb{E}\left[\fmi\right] + \mathbb{E}\left[\int_0^t \left\langle (\mathcal{L}_{j}f)(x,v), \msmzj(dx,dv)\right\rangle ds\right]\\
      &\phantom{=} +\sum_{\ell=1}^{L}\mathbb{E}\biggl[\int_0^t \int_{\txtv}\frac{1}{\bm{\alpha}^{(\ell)}!}K_\ell(\bm{x},\bm{v})\biggl(-\sum_{r=1}^{\alj}f\left(x_r^{(j)},v_r^{(j)}\right)+\int_{\yv}\left(\sum_{r=1}^{\blj}f\left(y_r^{(j)},{v'}_{r}^{(j)}\right)\right)\\
&\phantom{=}\times m_{\ell}^{\eta}\left(\bm{y},\bm{v'}|\bm{x},\bm{v}\right)d\bm{y}d\bm{v'}\biggr)\lmeasure ds\biggr].
\end{align*}
We arrive at the relation
   \begin{align*}
    \mathbb{E}&\left[\left\langle f, \mu_{t}^{\bm{\zeta}, j}\right\rangle\right]
      = \mathbb{E}\left[\fmi\right] + \mathbb{E}\left[\int_0^t \left\langle (\mathcal{L}_{j}f)(x,v), \msmzj(dx,dv)\right\rangle ds\right]\\
      &\phantom{=} +\sum_{\ell=1}^{L}\mathbb{E}\biggl[\int_0^t \int_{\txtv}\frac{1}{\bm{\alpha}^{(\ell)}!}K_\ell(\bm{x},\bm{v})\biggl(-\sum_{r=1}^{\alj}f\left(x_r^{(j)},v_r^{(j)}\right)+\int_{\yv}\left(\sum_{r=1}^{\blj}f\left(y_r^{(j)},{v'}_{r}^{(j)}\right)\right)\\
&\phantom{=}\times m_{\ell}^{\eta}\left(\bm{y},\bm{v'}|\bm{x},\bm{v}\right)d\bm{y}d\bm{v'}\biggr)\lmeasure ds\biggr],
\end{align*}
which motivates the main theorem of this paper, Theorem \ref{T:MainTheorem}. To state Theorem \ref{T:MainTheorem}, we also need to define $M_{F}(\Rd \times \Rd)$,  the space of finite measures endowed with the weak topology. We prove convergence of the measure-valued processes $\{\mu_{t}^{\zeta, j}\}_{t \in[0, T]}, j=1,2, \cdots, J$ on $\mathbb{D}_{\otimes_{j=1}^{J} M_{F}(\Rd \times \Rd)}([0, T])$, the space of c\`adl\`ag paths with values in $\otimes_{j=1}^{J} M_{F}(\Rd \times \Rd)$ endowed with the Skorokhod topology, see \cite{EthierKurtz}.
 \begin{theorem}\label{T:MainTheorem}
 (Mean field large-population limit) Let $0<T<\infty$ be given and let Assumptions \ref{A:molarBdd}-\ref{A:AssumptionMomentFiniteness} hold. Then, the sequence of measure-valued processes $\{\vmu_{t}^{\zeta}\}_{t \in[0, T]} \in \mathbb{D}_{\otimes_{j=1}^{J} M_{F}(\mathbb{R}^{d}\times\mathbb{R}^{d})}([0, T])$ is relatively compact in $\mathbb{D}_{\otimes_{j=1}^{J} M_{F}(\mathbb{R}^{d}\times\mathbb{R}^{d})}([0, T])$ and it converges in distribution to $\{\vxi_t\}_{t \in [0,T]} \in C_{\otimes_{j=1}^{J} M_{F}(\mathbb{R}^{d}\times\mathbb{R}^{d})}([0, T])$ as $\zeta \rightarrow 0$, where each $\xi_t^j$ is the unique solution to
\begin{align}\label{Eq:limitingMeasures}
   &\left\langle  f, \xi_{t}^{j}\right\rangle = \left\langle f, \xi_0^{j}\right\rangle + \int_0^t \left\langle (\mathcal{L}_{j}f)(x,v), \xi_{s}^{j}(dx,dv)\right\rangle ds\notag\\
      &\phantom{=} + \sum_{\ell=1}^{L}\int_0^t \int_{\txtv}\frac{1}{\bm{\alpha}^{(\ell)}!}K_\ell(\bm{x},\bm{v})\biggl(-\sum_{r=1}^{\alj}f\left(x_r^{(j)},v_r^{(j)}\right)+\int_{\yv}\left(\sum_{r=1}^{\blj}f\left(y_r^{(j)},{v'}_{r}^{(j)}\right)\right)\\
      &\phantom{=}\times m_{\ell}\left(\bm{y},\bm{v'}|\bm{x},\bm{v}\right)d\bm{y}d\bm{v'}\biggr)\llmeasure ds.\notag
\end{align}
\end{theorem}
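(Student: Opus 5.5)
The proof follows the Stroock--Varadhan martingale problem programme outlined in the introduction, carried out in four steps: tightness, identification of the limit, continuity of the limit, and uniqueness.

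\textbf{Step 1: semimartingale decomposition.} Starting from \eqref{Eq:PathDescription}, I compensate the Poisson random measures $N_\ell$ by $\bar N$. This writes
\[
\langle f,\mu^{\zeta,j}_t\rangle=\langle f,\mu^{\zeta,j}_0\rangle+\int_0^t \langle \mathcal{L}_j f,\mu^{\zeta,j}_s\rangle ds + \mathcal{R}^{f,j}_t + M^{f,j}_t ,
\]
where $\mathcal{R}^{f,j}_t$ is the reaction drift (the $\eta$-mollified version of the last term of \eqref{Eq:limitingMeasures}, taken against the product measure $\lambda^{(\ell)}[\mu^\zeta_{s-}]$ restricted to $\tilde{\mathbb X}^{(\ell)}\otimes\tilde{\mathbb V}^{(\ell)}$, up to an $O(1/\gamma)$ diagonal correction). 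The martingale $M^{f,j}_t$ has two pieces: a Brownian part with quadratic variation $\gamma^{-2}\int_0^t\sum_i |b_j\sqrt{2D_j}\,\partial_v f|^2ds \le C\|f\|_{C^1}^2 C_\circ t/\gamma$, and a compensated Poisson part. For the Poisson part, the jumps are of size $\|f\|_\infty/\gamma$. By Assumptions \ref{A:molarBdd}, \ref{A:kernalBdd} and \ref{A:KernelScaling}, the total jump rate is $O(\gamma)$, so its quadratic variation is also $O(1/\gamma)$. Doob's inequality then gives $\mathbb E\sup_{t\le T}|M^{f,j}_t|^2\to0$.

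\textbf{Step 2: tightness.} For $f\in C^2_b$, the drift terms are bounded uniformly in $\zeta$ by $C(K)C_\circ^2(\|f\|_{C^2})$ times the first moments of velocity. The first moments appear because $\mathcal{L}_j f$ contains $v\cdot\nabla_x f$, which is not bounded; they are controlled via the fourth moment bounds of Section \ref{S:MomentBounds}. Aldous' criterion together with Roelly's criterion then give tightness in $\mathbb D_{M_F'}[0,T]$ with the vague topology. To upgrade to $M_F$ with the weak topology (as in \cite{MS:1993}), I need a uniform compact containment estimate
\[
\lim_{R\to\infty}\sup_\zeta \mathbb E\sup_{t\le T}\langle \mathbbm 1_{|x|+|v|>R},\mu_t^{\zeta,j}\rangle=0 .
\]
I obtain this by applying the decomposition to a smooth cutoff $f_R$, or to $|x|^4+|v|^4$, and bounding the reaction contribution by Lemma \ref{L:boundoffm}. \emph{This is the main obstacle.} For $S_i+S_k\to S_j+S_r$ only momentum is conserved, so product velocities can be far from substrate velocities. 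I split the product velocity integral into the region $|v_1'-v_2'|\le R'$, where conservation of momentum and mass bounds the product velocities by the substrate velocities plus $R'$, and its complement, which is controlled by the tail and fourth moment of $\rho$ from Assumption \ref{A:AssumptionRho}. For $S_i+S_k\to S_j$, convexity of the placement (Assumption \ref{A:conservationofMomentum}) gives the bound directly. Gronwall then closes the moment estimate uniformly in $\zeta$.

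\textbf{Step 3: identification and continuity.} Take a subsequential limit $\vxi$ and use Skorokhod representation. Jumps are at most $C/\gamma$, so the limit lies in $C_{\otimes M_F}[0,T]$. For fixed $f$, I pass to the limit in each term. Convergence $\lambda^{(\ell)}[\mu^\zeta_s]\to\lambda^{(\ell)}[\xi_s]$ follows from weak convergence of product measures. The diagonal removal costs $O(1/\gamma)$. The convergence $m^\eta_\ell\to m_\ell$ is handled because $\int f(\bm y,\bm v')m^\eta_\ell\,d\bm y\,d\bm v'$ converges to its $\delta$-limit uniformly on compacts, by the uniform continuity of $f$ and the explicit mollified forms. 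Discontinuities of $K_\ell$ (for example, the Doi indicator) are handled by approximating $K_\ell$ by continuous kernels in $L^1$, and noting that $\xi_s$ has no atoms on the relevant sets. The unbounded $v\cdot\nabla_x f$ is dealt with via uniform integrability from the moment bounds. The martingale term vanishes, so every limit point solves \eqref{Eq:limitingMeasures}.

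\textbf{Step 4: uniqueness.} Given two solutions $\vxi,\bar\vxi$ with the same initial data, I use the mild formulation with the Langevin semigroup $P^j_t$. Testing with $P^j_{t-s}f$ and using $\|P^j_t f\|_\infty\le\|f\|_\infty$ (together with density of $C_b^2$ in the variation norm) gives
\[
\|\xi_t^j-\bar\xi_t^j\|_{M_F}\le C\int_0^t\sum_k\|\xi^k_s-\bar\xi^k_s\|_{M_F}ds .
\]
This uses boundedness of $K_\ell$, the fact that $m_\ell$ is a probability kernel, and total mass bounded by $C_\circ$, so that the quadratic terms are Lipschitz in the variation norm. Gronwall then yields uniqueness. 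Hence the full sequence converges in distribution to the deterministic $\vxi$.
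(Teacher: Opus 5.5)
Your four-step plan is the paper's: vague tightness via Aldous/Rebolledo, compact containment plus M\'el\'eard--Roelly to reach $M_F$, Skorokhod representation and identification, and uniqueness via the mild formulation with the Langevin semigroup and Gronwall. Identifying the limit term by term, instead of through It\^o's formula for cylindrical functionals $\Phi$, is an equivalent variant here.

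\textbf{Compact containment via a cutoff.} This is the step that does not go through as written. Lemma~\ref{L:boundoffm} bounds the reaction gain term for $f_m$ by cutoffs at a \emph{lower} level evaluated at the substrates. Since $f_{m'}\ge f_m$ for $m'<m$, an inequality of the form $Y^m_T\le Y^m_0+\dots+C\int_0^T Y^{m'}_t\,dt$ cannot be closed by Gronwall. Two mechanisms cause the drop in level, and your sketch misses both.
\begin{itemize}
\item For $S_i+S_k\to S_j$, the position weight $\alpha_i$ and the velocity weight $m_1/m_3$ differ. Convexity controls $|x_3|$ and $|v_3|$ separately, but $|x_3|+|v_3|$ only up to $(1+\sigma^*)\max_j(|x_j|+|v_j|)$. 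For example, take $|x_1|=|v_2|$ large, $v_1=x_2=0$, and $\alpha_i>m_1/m_3$. This is a multiplicative loss.
\item For $S_i+S_k\to S_j+S_r$, restricting to $|v_1'-v_2'|\le R'$ only gives $|v_1'|\le\max(|v_1|,|v_2|)+R'$. So neither $f_m(x_1,v_1')$ nor the cross pairing $f_m(x_1,v_2')$ is dominated by $f_{m-c}(x_1,v_1)$ when $|v_1-v_2|$ is large. The paper adds a split on $\max(|v_1|,|v_2|)\ge R$, controlled by Markov's inequality and the velocity moment bound.
\end{itemize}
The paper then closes the recursion in Lemma~\ref{L:massOutofCompact} by iterating it $n(m)\approx\log m/(2\log(1+\sigma^*))$ times. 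This uses $Y_0^{\phi^k(m)}\to0$ and $(C_1T)^n/n!\to0$.

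\textbf{Compact containment via moments.} If you go through moments, as ``Gronwall then closes the moment estimate'' suggests, the route is valid and simpler than the paper's. For power test functions, convexity and cross pairings are harmless once you sum over both substrates and use $\langle1,\mu\rangle\le C_\circ$. Chebyshev's inequality then gives compact containment. However, you need the \emph{maximal} bound $\sup_\zeta\mathbb{E}\sup_{t\le T}\langle|x|^2+|v|^2,\mu^{\zeta,j}_t\rangle<\infty$, not the $\sup_t\mathbb{E}$ bound of Lemma~\ref{L:MomentBounds}. So you must control the martingale part for an unbounded test function, which your Step 1 does not do.

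Given Lemma~\ref{L:MomentBounds}, this needs no further Gronwall; each term is bounded directly.
\begin{itemize}
\item The drift is bounded by $\int_0^T\langle|\mathcal{L}_jf|,\mu_s\rangle ds$.
\item The gain term is monotone in $t$.
\item The jump martingale satisfies $\mathbb{E}\sup_t|\int_0^t g\,d\tilde N_\ell|\le2\mathbb{E}\int_0^T|g|\,d\bar N$.
\item The Brownian part is handled by Doob's inequality, with quadratic variation $\gamma^{-1}\int_0^t\langle 8b_j^2D_j|v|^2,\mu^{\zeta,j}_s\rangle ds$.
\end{itemize}
With $|x|^4+|v|^4$, as you wrote, the Brownian quadratic variation involves $\langle|v|^6,\mu\rangle$, which Assumption~\ref{A:AssumptionMomentFiniteness} does not cover. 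Use the second moment instead. With that change, your Step 2 replaces Lemmas~\ref{L:massOutofCompact}, \ref{L:boundoffm} and \ref{L:interchangelimit} by a Chebyshev argument.

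\textbf{Discontinuous kernels.} Approximating a discontinuous $K_\ell$ in $L^1(d\bm{x}\,d\bm{v})$ does not help against empirical measures. What you need is that $\lambda^{(\ell)}[\xi_s]$ gives no mass to the discontinuity set of $K_\ell$ for a.e.\ $s$. That requires regularity of $\xi_s$ beyond the absence of atoms. The paper does not address this point either.
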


In the absence of velocities, i.e. overdamped motion where particle configurations are determined solely by their positions, an analogous theorem has been proven in \cite{IsaacsonSIMA2022}. Including velocity not only leads to a more physical model—since particles naturally have both position and velocity—but also introduces additional mathematical considerations. In particular, as we showed in Assumption \ref{A:PlacementDensity}, rules for velocity placement need to be specified, which necessitates properly handling and bounding of the resulting terms.

\begin{remark}
If the limiting measures $\vxi_t = \left(\xi_{t}^{1}(dx, dv), \xi_{t}^{2}(dx, dv), \ldots, \xi_{t}^{J}(dx, dv)\right)$ have marginal densities $\vrho(x,v, t) \coloneqq \left(\rho_1(x,v,t),\dots,\rho_J(x,v,t)\right)$, then these marginals solve, in a weak sense, the following kinetic PIDEs
\begin{align*}
\partial_{t} &\rho_{j}(x, v, t)=-v \nabla_{x}\rho_{j}(x, v, t) + b_{j} \nabla_{v}\cdot \left[v \rho_{j}(x, v, t) + b_{j} D_j \nabla_{v}\rho_{j}(x, v, t)\right]\\
&\phantom{=} +\sum_{\ell=1}^{L}\int_{\txtv} \frac{1}{\bm{\alpha}^{(\ell)}!} K_{\ell}(\bm{\tilde{x}},\bm{\tilde{v}})\biggl(-\sum_{r=1}^{\alpha_{\ell j}}\delta_x(\txrj)\delta_v(\tvrj)+\int_{\yv}\left(\sum_{r=1}^{\blj}\delta_x(\yrj)\delta_v(\vprj)\right)\\
&\phantom{=}\times m_{\ell}\left(\bm{y},\bm{v'}|\bm{\tilde{x}},\bm{\tilde{v}}\right)d\bm{y}d\bm{v'}\biggr) \left(\Pi_{k=1}^{J} \Pi_{s=1}^{\alpha_{\ell k}} \rho_{k}(\tilde{x}_{s}^{(k)}, \tilde{v}_{s}^{(k)},t)\right) d \bm{\tilde{x}}d \bm{\tilde{v}}. \numberthis \label{E:limitingPIDEs}
\end{align*}
\end{remark}

 We conclude this section by presenting a few examples to illustrate the limiting PIDEs for basic reaction types.

\begin{example}
Consider a system with three species, $A$, $B$, and $C$ that can undergo the reversible reaction $A+B \rightleftarrows C$. Define the measures for $A$, $B$, and $C$ particles at time $t$ respectively as $\mu_{t}^{\zeta, 1}, \mu_{t}^{\zeta, 2},$ and $\mu_{t}^{\zeta, 3} \in M(\Rd \times \Rd)$.

Let $\mathcal{R}_{1}$ be the forward reaction $A+B \rightarrow C$, with $K_{1}^{\gamma}(x_1, x_2, v_1, v_2)$ the probability per unit time one $A$ particle at position $x_1$ with velocity $v_1$ and one $B$ particle at position $x_2$ with velocity $v_2$ bind. Once reaction $\mathcal{R}_{1}$ fires, we generate a new particle $C$ at position $y$ with velocity $v'$ following the placement density $m_{1}^{\eta}(y,v' | x_1, x_2, v_1, v_2)$. For $\mathcal{R}_{1}$, the substrates are particles of species $A$ and $B$, so $\alpha_{11}=\alpha_{12}=1$ and $\alpha_{13}=0$. The product is one particle $C$, so $\beta_{11}=\beta_{12}=0$ and $\beta_{13}=1 .$

Let $\mathcal{R}_{2}$ be the backward reaction $C \rightarrow A+B$, with $K_{2}^{\gamma}(y,v')$ the probability per time one $C$ particle at position $y$ with velocity $v'$ unbinds. Once reaction $\mathcal{R}_{2}$ fires, we generate a new particle $A$ at position $x_1$ with velocity $v_1$ and a new particle $B$ at position $x_2$ with velocity $v_2$ following the placement density $m_{2}^{\eta}(x_1, x_2, v_1, v_2|y,v')$. For $\mathcal{R}_{2}$, the substrate is a $C$ particle, so $\alpha_{21}=\alpha_{22}=0$ and $\alpha_{23}=1$. The products are $A$ and $B$ particles, so $\beta_{21}=\beta_{22}=1$ and $\beta_{23}=0 .$

If the limiting spatially distributed measures for species $A, B$ and $C$ have marginal densities $\bigl(\rho_{1}(x, v, t), \rho_{2}(x, v, t), \rho_{3}(x, v, t)\bigr)$ respectively, they must solve the following reaction-diffusion equations in a weak sense:
\begin{align}
\partial_{t} &\rho_{1}(x_1,v_1,t) =-v_1 \nabla_{x_1}\rho_{1}(x_1, v_1, t) + b_{1} \nabla_{v_1}\cdot \left[v_1 \rho_{1}(x_1, v_1, t) + b_{1} D_1 \nabla_{v_1}\rho_{1}(x_1, v_1, t)\right] \notag \\
&-\biggl(\int_{\Rd\times\Rd} K_{1}(x_1,x_2,v_1,v_2) \rho_{2}(x_2, v_2, t) d x_2 dv_2\biggr) \rho_{1}(x_1,v_1,t)\notag\\
&+\int_{\Rd\times\Rd} K_{2}(y,v')\biggl(\int_{\Rd\times\Rd} m_{2}(x_1,x_2,v_1,v_2| y,v')d x_2 dv_2\biggr) \rho_{3}(y,v', t) d y dv' \notag\\
\partial_{t} &\rho_{2}(x_2,v_2, t)=-v_2 \nabla_{x_2}\rho_{2}(x_2, v_2, t) + b_{2} \nabla_{v_2}\cdot \left[v_2 \rho_{2}(x_2, v_2, t) + b_{2} D_2 \nabla_{v_2}\rho_{2}(x_2, v_2, t)\right]\notag\\
&-\biggl(\int_{\Rd\times\Rd} K_{1}(x_1,x_2,v_1,v_2) \rho_{1}(x_1,v_1,t) d x_1 dv_1 \biggr)\rho_{2}(x_2,v_2, t)\notag\\
&+\int_{\Rd\times\Rd} K_{2}(y,v')\biggl(\int_{\Rd\times\Rd} m_{2}(x_1,x_2,v_1,v_2| y,v')d x_1 dv_1\biggr) \rho_{3}(y,v',t) d y dv'  \notag\\
\partial_{t} &\rho_{3}(y,v', t)=-v' \nabla_{y}\rho_{3}(y, v', t) + b_{3} \nabla_{v'}\cdot \left[v' \rho_{3}(y, v', t) + b_{3} D_3 \nabla_{v'}\rho_{j}(y, v', t)\right]\notag\\
&+\int_{\mathbb{R}^{4d}} K_{1}(x_1,x_2,v_1,v_2) m_{1}(y,v'| x_1,x_2,v_1,v_2)\rho_{1}(x_1,v_1,t) \rho_{2}(x_2,v_2,t) d x_1 d x_2 dv_1 dv_2\notag\\
& -K_{2}(y,v')\rho_{3}(y,v', t).\notag \\
\end{align}
\end{example}
\begin{example}
Consider a system with four species, $A, B, C$ and $D$ that can undergo the reversible reaction $A+B \rightleftarrows C +D.$
Define the measures for A, B, C, and D particles at time $t$ respectively as $\mu_{t}^{\zeta, 1}, \mu_{t}^{\zeta, 2}, \mu_{t}^{\zeta, 3}$ and $\mu_{t}^{\zeta, 4} \in M(\mathbb{R}^{d} \times \Rd)$.

Let $\mathcal{R}_{1}$ be the forward reaction $A+B \rightarrow C+D$, with $K_{1}^{\gamma}(x_1, x_2, v_1, v_2)$ the probability per time one $A$ particle at position $x_1$ with velocity $v_1$ and one $B$ particle at position $x_2$ with velocity $v_2$ bind. Once reaction $\mathcal{R}_{1}$ fires, we generate one new particle $C$ at position $y_1$ with velocity $v_1'$ and one new particle $D$ at position $y_2$ with velocity $v_2'$ following the placement density $m_{1}^{\eta}(y_1,y_2,v_1', v_2'| x_1, x_2, v_1, v_2)$. For $\mathcal{R}_{1}$, the substrates are particles of species $A$ and $B$, so $\alpha_{11}=\alpha_{12}=1$ and $\alpha_{13}=\alpha_{14}=0$. The products are one particle $C$ and one particle $D$, so $\beta_{11}=\beta_{12}=0$ and $\beta_{13}=\beta_{14}=1 .$

Let $\mathcal{R}_{2}$ be the backward reaction $C+D \rightarrow A+B$, with $K_{2}^{\gamma}(y_1, y_2, v_1', v_2')$ the probability per time one $C$ particle at position $y_1$ with velocity $v_1'$ and one $D$ particle at position $y_2$ with velocity $v_2'$ bind. Once reaction $\mathcal{R}_{2}$ fires, we generate a new particle $A$ at position $x_1$ with velocity $v_1$ and a new particle $B$ at position $x_2$ with velocity $v_2$ following the placement density $m_{2}^{\eta}(x_1, x_2, v_1, v_2 | y_1,y_2,v_1', v_2')$. For $\mathcal{R}_{2}$, the substrates are one $C$ particle and one $D$ particle, so $\alpha_{21}=\alpha_{22}=0$ and $\alpha_{23}=\alpha_{24}=1$. The products are one $A$ particle and one $B$ particle, so $\beta_{21}=\beta_{22}=1$ and $\beta_{23}=\beta_{24}=0 .$

If the limiting spatially distributed measures for species $A, B, C$ and $D$ have marginal densities $\bigl(\rho_{1}(x,v,t), \rho_{2}(x,v,t), \rho_{3}(x,v,t), \rho_{4}(x,v,t)\bigr)$ respectively, they must solve the following reaction-diffusion equations in a weak sense:
\begin{align*}
&\partial_{t} \rho_{1}(x_1,v_1,t)=-v_1 \nabla_{x_1}\rho_{1}(x_1, v_1, t) + b_{1} \nabla_{v_1}\cdot \left[v_1 \rho_{1}(x_1, v_1, t) + b_{1} D_1 \nabla_{v_1}\rho_{1}(x_1, v_1, t)\right]\\
\\
&\phantom{=} -\biggl(\int_{\Rd \times \Rd} K_{1}(x_1, x_2, v_1, v_2)\rho_{2}(x_2, v_2, t) dx_2dv_2\biggr) \rho_{1}(x_1,v_1,t)+\int_{\mathbb{R}^{4d}} K_{2}(y_1,y_2,v_1', v_2')\\
&\phantom{=} \times \biggl(\int_{\Rd \times \Rd}m_{2}(x_1, x_2, v_1, v_2 | y_1,y_2,v_1', v_2')d x_2 dv_2\biggr)\rho_{3}(y_1, v_1', t) \rho_{4}(y_2, v_2', t) d y_1 dv_1' d y_2 dv_2' \\
&\partial_{t} \rho_{2}(x_2,v_2, t) =-v_2 \nabla_{x_2}\rho_{2}(x_2, v_2, t) + b_{2} \nabla_{v_2}\cdot \left[v_2 \rho_{2}(x_2, v_2, t) + b_{2} D_2 \nabla_{v_2}\rho_{1}(x_2, v_2, t)\right]  \\
&\phantom{=}-\biggl(\int_{\Rd \times \Rd} K_{1}(x_1, x_2, v_1, v_2)\rho_{1}(x_1,v_1,t) d x_1 dv_1 \biggr)\rho_{2}(x_2, v_2, t)+\int_{\mathbb{R}^{4d}} K_{2}(y_1,y_2,v_1', v_2')\\
&\phantom{=} \times \biggl(\int_{\Rd \times \Rd} m_{2}(x_1, x_2, v_1, v_2 | y_1,y_2,v_1', v_2')d x_1 dv_1\biggr)\rho_{3}(y_1, v_1', t)\rho_{4}(y_2, v_2', t) d y_1 dv_1'dy_2 dv_2' \\
&\partial_{t} \rho_{3}(y_1,v_1', t) = -v_1' \nabla_{y_1}\rho_{1}(y_1, v_1', t) + b_{3} \nabla_{v_1'}\cdot \left[v_1' \rho_{3}(y_1, v_1', t) + b_{3} D_3 \nabla_{v_1'}\rho_{3}(y_1, v_1', t)\right]\\
&\phantom{=} -\biggl(\int_{\Rd \times \Rd}K_{2}(y_1,y_2,v_1', v_2') \rho_{4}(y_2, v_2', t)dy_2 dv_2'\biggr)\rho_{3}(y_1, v_1', t)+\int_{\mathbb{R}^{4d}} K_{1}(x_1, x_2, v_1, v_2) \\
&\phantom{=} \times
\biggl(\int_{\Rd \times \Rd}m_{1}(y_1,y_2,v_1', v_2'|x_1, x_2, v_1, v_2) dy_2 dv_2'\biggr)\rho_{1}(x_1,v_1,t) \rho_{2}(x_2, v_2, t) dx_1dx_2dv_1dv_2\\
&\partial_{t} \rho_{4}(y_2,v_2', t)=-v_2' \nabla_{y_2}\rho_{4}(y_2, v_2', t) + b_{4} \nabla_{v_2'}\cdot \left[v_2' \rho_{4}(y_2, v_2', t) + b_{4} D_4 \nabla_{v_2'}\rho_{4}(y_2, v_2', t)\right]\\
&\phantom{=} -\biggl(\int_{\Rd \times \Rd}K_{2}(y_1,y_2,v_1', v_2')\rho_{3}(y_1, v_1', t)dy_1dv_1'\biggr)\rho_{4}(y_2, v_2', t) +\int_{\mathbb{R}^{4d}} K_{1}(x_1, x_2, v_1, v_2) \\
&\phantom{=} \times \biggl(\int_{\Rd \times \Rd}m_{1}(y_1,y_2,v_1', v_2'|x_1, x_2, v_1, v_2) dy_1 dv_1'\biggr)\rho_{1}(x_1,v_1,t) \rho_{2}(x_2, v_2, t) dx_1dx_2dv_1dv_2.
\end{align*}
\end{example}
\begin{example}
Consider a system with two species, $A$ and $B$ that can undergo the reversible dimerization reaction $A+A \rightleftarrows B.$
Define the measures for A and B particles at time $t$ respectively as $\mu_{t}^{\zeta, 1}$ and $\mu_{t}^{\zeta, 2} \in M(\mathbb{R}^{d}\times \Rd)$.

Let $\mathcal{R}_{1}$ be the forward reaction $A+A \rightarrow B$, with $K_{1}^{\gamma}(x_1, x_2, v_1, v_2)$ the probability per time one $A$ particle at position $x_1$ with velocity $v_1$ and another $A$ particle at position $x_2$ with velocity $v_2$ bind. Once reaction $\mathcal{R}_{1}$ fires, we generate a new particle $B$ at position $y$ following the placement density $m_{1}^{\eta}(y,v' | x_1, x_2, v_1, v_2 )$. For $\mathcal{R}_{1}$, the substrates are particles of species $A$, so $\alpha_{11}=2$ and $\alpha_{12}=0$. The product is one particle $B$, so $\beta_{11}=0$ and $\beta_{12}=1.$

Let $\mathcal{R}_{2}$ be the backward reaction $B \rightarrow A+A$, with $K_{2}^{\gamma}(y,v')$ the probability per time one $B$ particle at position $y$ with velocity $v'$ unbinds. Once reaction $\mathcal{R}_{2}$ fires, we generate two new $A$ particles at $x_1$ with velocity $v_1$ and at $x_2$ with velocity $v_2$ following the placement density $m_{2}^{\eta}(x_1, x_2, v_1, v_2|y,v')$. For $\mathcal{R}_{2}$, the substrate is one $B$ particle, so $\alpha_{21}=0$ and $\alpha_{22}=1$. The products are two $A$ particles, so $\beta_{21}=2$ and $\beta_{22}=0$.

If the limiting spatially distributed measures for species $A$ and $B$ have marginal densities $\bigl(\rho_{1}(x, v, t), \rho_{2}(x, v, t)\bigr)$ respectively, they must solve the following reaction-diffusion equations in a weak sense:
\begin{align*}
&\partial_{t} \rho_{1}(x_1,v_1,t)=-v_1 \nabla_{x_1}\rho_{1}(x_1, v_1, t) + b_{1} \nabla_{v_1}\cdot \left[v_1 \rho_{1}(x_1, v_1, t) + b_{1} D_1 \nabla_{v_1}\rho_{1}(x_1, v_1, t)\right]\\
&\phantom{=} +2\int_{\mathbb{R}^{d}\times\Rd} K_{2}(y,v')\biggl(\int_{\mathbb{R}^{d}\times \Rd}m_{2}(x_1,x_2,v_1,v_2 | y,v')dx_2dv_2\biggr)\rho_{2}(y,v', t) d y dv' \\
&\phantom{=} -\biggl(\int_{\mathbb{R}^{d}\times \Rd}K_{1}(x_1,x_2,v_1,v_2) \rho_{1}(x_2,v_2, t)dx_2dv_2\biggr)\rho_{1}(x_1,v_1, t) \\
&\partial_{t} \rho_{2}(x_2,v_2, t) =-v_2 \nabla_{x_2}\rho_{2}(x_2, v_2, t) + b_{2} \nabla_{v_2}\cdot \left[v_2 \rho_{2}(x_2, v_2, t) + b_{2} D_2 \nabla_{v_2}\rho_{2}(x_2, v_2, t)\right] \\
&\phantom{=}-K_{2}(x_2,v_2)\rho_{2}(x_2,v_2, t) \\
&\phantom{=} +\frac{1}{2}\int_{\mathbb{R}^{4d} } K_{1}(x_1,x_2,v_1,v_2) m_{1}(y,v' | x_1,x_2,v_1,v_2) \rho_{1}(x_1,v_1, t) \rho_{1}(x_2,v_2, t) d x_1dx_2dv_1dv_2,
\end{align*}
where we have assumed $K_1$ is symmetric in the two $A$ particles' coordinates.
\end{example}

\section{Identification}\label{S:Identification}

We proceed to formally identify the limiting measures. Recall that $\bm{\mu}:=(\mu^{1}, \cdots, \mu^{J}) \in \otimes_{j=1}^{J} M_{F}(\mathbb{R}^{d}\times\mathbb{R}^{d})$ and $\mu=\sum_{j=1}^{J} \mu^{j} \delta_{S_{j}} \in M_{F}(\hat{P})$ with each $\mu^{j} \in M_{F}(\mathbb{R}^{d}\times\mathbb{R}^{d})$.
Let $\mathcal{S}$ be the collection of elements $\Phi$ in the space of bounded functionals, $B\left(\otimes_{j=1}^{J} M_{F}(\mathbb{R}^{d}\times\mathbb{R}^{d})\right)$, of the form
\begin{equation} \label{eq:IdentificationPhiDef}
\Phi(\bm{\mu})=\varphi\left(\left\langle f_{1}, \bm{\mu}\right\rangle,\left\langle f_{2}, \bm{\mu} \right\rangle, \ldots ,\left\langle f_{M}, \bm{\mu}\right\rangle\right)
\end{equation}
for some $M \in \mathbb{N}$, $\varphi \in C^{\infty}(\mathbb{R}^{JM})$, $\langle f_m, \bm{\mu}\rangle = (\langle f_{1,m},\mu^1 \rangle, \cdots, \langle f_{J,m},\mu^J \rangle)$ where each $f_{j,m} \subset C_{b}^{2}(\mathbb{R}^{d}\times\mathbb{R}^{d})$ for $1 \leq j \leq J$ and $1 \leq m \leq M$. Then for $\Phi \in \mathcal{S}$ of the above form, the generator $\mathcal{A}$ of (\ref{Eq:limitingMeasures}) and of the limiting martingale problem for $1 \leq j \leq J$, is defined as
\begin{align*}
&(\mathcal{A} \Phi)(\bm{\mu}) \coloneqq
\sum_{m=1}^{M} \sum_{j=1}^{J} \frac{\partial \varphi}{\partial x_{(m-1) \times J+j}}\left(\left\langle f_{1}, \bm{\mu}\right\rangle,\left\langle f_{2}, \bm{\mu}\right\rangle, \ldots,\left\langle f_{M}, \bm{\mu}\right\rangle\right)\biggl[\left\langle (\mathcal{L}_{j} f_{j, m})(x,v), \mu^{j}(dx, dv)\right\rangle\\
&\phantom{=}+\sum_{\ell=1}^{L}\int_{\txtv}\frac{1}{\bm{\alpha}^{(\ell)}!}K_\ell(\bm{x},\bm{v})\biggl(-\sum_{r=1}^{\alj}\fjm \left(x_r^{(j)},v_r^{(j)}\right)+\int_{\yv}\left(\sum_{r=1}^{\blj}\fjm\left(y_r^{(j)},{v'}_{r}^{(j)}\right)\right)\\
&\phantom{=}\times m_{\ell}\left(\bm{y},\bm{v'}|\bm{x},\bm{v}\right)d\bm{y}d\bm{v'}\biggr)\lambda^{(\ell)}[\mu](d\bm{x}, d\bm{v})\biggr].
\end{align*}
As explained in~\cite{IsaacsonSIMA2022}, to identify the limit it suffices to show convergence of the martingale problem for functions of the form~\eqref{eq:IdentificationPhiDef}, we we now show below.
\begin{lemma}(Weak Convergence).\label{Eq:weakconvergence} For any $\Phi \in \mathcal{S}$ and $0 \leq r_{1} \leq r_{2} \cdots \leq r_{W}=s<t<T$ and $\{\psi_{w}\}_{w=1}^{W} \subset$ $B\left(\otimes_{j=1}^{J} M_{F}(\mathbb{R}^{d}\times \mathbb{R}^{d})\right)$, we have that
\begin{equation}
\lim_{\zeta \rightarrow 0} \mathbb{E}\left[\left\{\Phi(\bm{\mu}_{t}^{\zeta})-\Phi(\bm{\mu}_{s}^{\zeta})-\int_{s}^{t}(\mathcal{A} \Phi)(\bm{\mu}_{r}^{\zeta}) d r\right\} \prod_{w=1}^{W} \psi_{w}\left(\bm{\mu}_{r_{w}}^{\zeta}\right)\right]=0.\label{il}
\end{equation}
\end{lemma}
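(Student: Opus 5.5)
The plan is to apply It\^o's formula for semimartingales with jumps to $\Phi(\bm{\mu}_t^{\zeta})=\varphi(\langle f_1,\bm{\mu}_t^\zeta\rangle,\dots,\langle f_M,\bm{\mu}_t^\zeta\rangle)$, using the pathwise representation \eqref{Eq:PathDescription} for each coordinate $\langle f_{j,m},\mu_t^{\zeta,j}\rangle$. This decomposes $\Phi(\bm{\mu}_t^\zeta)-\Phi(\bm{\mu}_s^\zeta)$ into four pieces:
\begin{itemize}
\item[(i)] a Brownian stochastic integral, which is a martingale;
\item[(ii)] a compensated Poisson integral against $d\tilde N_\ell$, also a martingale;
\item[(iii)] the drift $\int_s^t \mathcal{A}^\zeta\Phi(\bm{\mu}_r^\zeta)\,dr$, where $\mathcal{A}^\zeta$ is the prelimit generator: the transport part equals that of $\mathcal{A}$ exactly, while the reaction part is the full jump increment $\varphi(\cdot+\Delta)-\varphi(\cdot)$, integrated against $K_\ell^\gamma m_\ell^\eta$ and summed over $\bm{i}\in\Omega^{(\ell)}(\gamma\mu_{r-}^\zeta)$;
\item[(iv)] the second-order It\^o correction from the Brownian part, of the form $\frac{1}{\gamma^2}\sum_i b_j^2 D_j\,\partial^2\varphi\,|\partial_v f|^2$.
\end{itemize}
Since the $\psi_w(\bm{\mu}^\zeta_{r_w})$ are bounded and $\mathcal{F}_s$-measurable, the martingale terms (i) and (ii) vanish in the expectation \eqref{il}. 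By boundedness of $\psi_w$, it therefore suffices to show
\[
\mathbb{E}\int_s^t |\mathcal{A}^\zeta\Phi(\bm{\mu}_r^\zeta)-\mathcal{A}\Phi(\bm{\mu}_r^\zeta)|\,dr\to 0.
\]

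\textbf{Bounding the It\^o correction and the jump Taylor remainder.} The term (iv) is bounded by $C\gamma^{-1}\langle 1,\mu^{\zeta,j}_r\rangle\le C C_\circ/\gamma$ using Assumption \ref{A:molarBdd} and $f\in C_b^2$, so it vanishes. For the reaction part, each jump changes $\langle f_{j,m},\mu^{\zeta,j}\rangle$ by $O(\|f\|_\infty/\gamma)$. A second-order Taylor expansion of $\varphi$ therefore gives the first-order term plus a remainder $O(\gamma^{-2})$ per reaction event. The total reaction intensity is $\sum_{\bm{i}}K^\gamma_\ell=O(\gamma)$ by Assumptions \ref{A:kernalBdd}, \ref{A:KernelScaling} and \ref{A:molarBdd}, since $|\Omega^{(\ell)}|\le (\gamma C_\circ)^{|\bm\alpha^{(\ell)}|}$ and $K_\ell^\gamma=\gamma^{1-|\bm\alpha^{(\ell)}|}K_\ell$. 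Hence the remainder contributes $O(1/\gamma)$.

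\textbf{Rewriting the first-order term.} As in the expectation computation preceding Theorem \ref{T:MainTheorem}, the first-order term is $\partial\varphi$ times
\[
\int_{\txtv}\frac{1}{\bm\alpha^{(\ell)}!}K_\ell\bigl(-\textstyle\sum_r f+\int f\, m_\ell^\eta\bigr)\lambda^{(\ell)}[\mu_r^\zeta].
\]
This differs from the corresponding term of $\mathcal{A}$ in two respects. First, the sum over distinct indices $\Omega^{(\ell)}$ is compared with the full product measure $\lambda^{(\ell)}$. For $2S_j$ reactions the diagonal $i_1=i_2$ contributes at most $\gamma^{-1}C(K)\|f\|_\infty C_\circ$, so this discrepancy is $O(1/\gamma)$. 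Second, $m_\ell^\eta$ replaces $m_\ell$. The term $\int f(\bm y,\bm v')\,m_\ell^\eta(\bm y,\bm v'|\bm x,\bm v)\,d\bm y\,d\bm v'$ is, by the mollified forms in Assumption \ref{A:PlacementDensity}, an average of $f$ over an $\eta$-neighbourhood of the delta-placement configuration. For the cases $S_i\to S_j$, $S_i+S_k\to S_j$ and $S_i\to S_j+S_k$ one gets a bound $|\int f m^\eta_\ell-\int f m_\ell|\le \eta\|\nabla f\|_\infty$ uniformly in $(\bm x,\bm v)$. For the unbinding case we change variables and integrate against $\rho$, and the uniform-in-$(\bm{x},\bm{v})$ estimate follows similarly. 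Multiplying by bounded $K_\ell$ and a mass bounded by $C_\circ^{|\bm\alpha|}$, this discrepancy is $O(\eta)$.

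\textbf{Main obstacle.} The hardest case is $S_i+S_k\to S_j+S_r$, where the velocity mollifier acts on the combination $m_3v_1'+m_4v_2'$ with the prefactor $(m_3+m_4)^d\rho(|v_1'-v_2'|)$. Here the map $(v_1',v_2')\mapsto(\text{momentum},\text{relative velocity})$ must be used to reduce to a convolution in the momentum variable. The first attempt would be to change variables to $P=m_3v_1'+m_4v_2'$ and $u=v_1'-v_2'$; the Jacobian is $(m_3+m_4)^d$, which cancels the prefactor. This yields $\int\rho(u)\,[G_\eta*g_u](m_1v_1+m_2v_2)\,du$, where $g_u$ is $f$ composed with a linear map. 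The difference from the $\eta=0$ version is then at most $C\eta\|\nabla f\|_\infty$, with $C$ depending only on the masses, because the linear map is bounded and $\rho$ integrates to one. Should a uniform Lipschitz estimate fail (e.g.\ for general $f\in C_b^2$ in the limit formulation), the fallback is to split $|u|\le R$ versus $|u|>R$ using the tail bound of Assumption \ref{A:AssumptionRho}, with the moment bounds (Lemma \ref{L:boundoffm}) controlling the large-velocity region. Collecting terms gives $\mathbb{E}\int_s^t|\mathcal{A}^\zeta\Phi-\mathcal{A}\Phi|\,dr\le C(t-s)(\gamma^{-1}+\eta)\to0$, proving \eqref{il}.
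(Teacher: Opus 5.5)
Your proposal is correct and follows essentially the same route as the paper: It\^o's formula applied to $\Phi(\bm{\mu}^{\zeta}_t)$, $O(1/\gamma)$ bounds on the It\^o correction and on the jump Taylor remainder using $K_\ell^\gamma=\gamma^{1-|\bm{\alpha}^{(\ell)}|}K_\ell$ and Assumption \ref{A:molarBdd}, and a uniform-in-$(\bm{x},\bm{v})$ $O(\eta)$ placement-density estimate for the $m_\ell^\eta$ versus $m_\ell$ discrepancy, which is the paper's Lemma \ref{L:placementDensityDifference}. The differences are minor: you remove the martingale terms by orthogonality to the bounded $\mathcal{F}_s$-measurable weights rather than through the vanishing quadratic-variation bound \eqref{Eq:quadraticvariation}, and you get a direct $L^1$ rate $C(t-s)(\gamma^{-1}+\eta)$ without the Skorokhod detour; your diagonal correction and the $|u|>R$ fallback are not needed, because $\mathcal{A}$ already integrates over $\tilde{\mathbb{X}}^{(\ell)}\otimes\tilde{\mathbb{V}}^{(\ell)}$ and your Lipschitz estimate holds for every $f\in C_b^2$.
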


\begin{proof}
For $j=1,
\ldots,J,$ we decompose \eqref{Eq:PathDescription} into three terms
\begin{equation}
    \left\langle f, \mu_{t}^{\zeta, j}\right\rangle = \fmi + \At + \Mt ,
\end{equation}
where
\begin{align*}
    \At &= \int_0^t \left\langle (\mathcal{L}_{j}f)(x,v), \msmzj(dx,dv)\right\rangle ds \\
    &\phantom{=} + \sum_{\ell=1}^{L}\int_0^t \int_{\txtv}\frac{1}{\bm{\alpha}^{(\ell)}!}K_{\ell}(\bm{x},\bm{v})\biggl(-\sum_{r=1}^{\alj}f\left(x_r^{(j)},v_r^{(j)}\right)+\int_{\yv}\left(\sum_{r=1}^{\blj}f\left(y_r^{(j)},{v'}_{r}^{(j)}\right)\right)\\
&\phantom{=}\times m_{\ell}^{\eta}\left(\bm{y},\bm{v'}|\bm{x},\bm{v}\right)d\bm{y}d\bm{v'}\biggr)\lmeasure ds,
\end{align*}
and
\begin{align*}
    \Mt &= \ginverse \sum_{i\geq1} \int_{0}^{t} \mathbbm{1}_{\{i \leq \all\}}b_{j}\sqrt{2 D_{j}} \pfpv\left(\HQ,\HV \right) d W_{s}^{i}\\
    &\phantom{=} +\sum_{\ell=1}^{L} \int_{0}^{t} \int_{\mathbb{I}^{(\ell)}} \int_{\yv} \int_{\mathbb{R}_{+}^2}\biggl(\left\langle f, \mu_{s^-}^{\zeta, j}-\frac{1}{\gamma} \sum_{r=1}^{\alpha_{\ell j}} \delta_{H^{i_{r}^{(j)}}_Q(\gamma \mu_{s^-}^{\zeta, j}),H^{i_{r}^{(j)}}_V(\gamma \mu_{s^-}^{\zeta, j})}+\ginverse\sum_{r=1}^{\beta_{\ell j}}\delta_{y_{r}^{(j)},{v'}_{r}^{(j)}} \right\rangle\\
&\hspace{2cm} -\left\langle f, \mu_{s^-}^{\zeta, j}\right\rangle\biggr)\times 1_{\{\bm{i} \in \Omega^{(\ell)}(\gamma\mu_{s^-}^{\zeta})\}} \times 1_{\{\theta_1 \leq K_{\ell}^{\gamma}\left(\pl\right)\}}\times\nonumber\\
&\hspace{4cm}\times 1_{\{\theta_2 \leq m_{\ell}^{\eta}\left(\bm{y},\bm{v'}|\pl\right)\}}d \tilde{N}_{\ell}(s,\bm{i},\bm{y},\bm{v'},\theta_1,\theta_2).
\end{align*}

The quadratic variation of $\Mt$ is
\begin{align*}
    &\left\langle M^{f,j}\right\rangle_t = \frac{1}{\gamma^2} \int_0^t \sumall \left(b_{j}\sqrt{2 D_{j}} \pfpv\left(\HQ,\HV \right)\right)^2 ds\\
    &\phantom{=} + \frac{1}{\gamma^2} \sum_{\ell=1}^{L} \int_{0}^{t} \int_{\yv} \sum_{\bm{i} \in \Omega^{(\ell)}(\gamma\mu_{s^-}^{\zeta})}\left(-\sum_{r=1}^{\alpha_{\ell j}} f\left(\HQ,\HV\right)+\sum_{r=1}^{\beta_{\ell j}}f\left(y_r^{(j)},{v'}_{r}^{(j)}\right) \right)^2 \\
    &\phantom{=}\quad \times K_{\ell}^{\gamma}\left(\pl\right)m_{\ell}^{\eta} \left(\bm{y},\bm{v'}|\pl \right)d\bm{y}d\bm{v'}ds\\
    &\leq \frac{1}{\gamma}\int_0^t \left\langle 2(b_{j})^2 D_j \left(\pfpv\right)^2,\msmzj\right\rangle ds+ \nonumber\\
    &\phantom{=}+\frac{1}{\gamma^2} \sum_{\ell=1}^{L} \int_{0}^{t} \int_{\yv} \sum_{\bm{i} \in \Omega^{(\ell)}(\gamma\mu_{s^-}^{\zeta})} \left(\alj+\blj\right)^2\norm{f}^2_{C_b^0(\Rd\times\Rd)}\times\\
    &\phantom{=}\quad\quad \times K_{\ell}^{\gamma}\left(\pl\right)m_{\ell}^{\eta} \left(\bm{y},\bm{v'}|\pl \right)d\bm{y}d\bm{v'}ds\\
     &\leq  \frac{2b_{j}^2D_jt\norm{f}^2_{C_b^1(\Rd \times \Rd)}}{\gamma}C_{\circ}+ 16C(K)\norm{f}^2_{C_b^0(\Rd\times\Rd)} \sum_{\ell=1}^{L} \int_{0}^{t} \frac{1}{\gamma^2}\sum_{\bm{i} \in \Omega^{(\ell)}(\gamma\mu_{s^-}^{\zeta})}\gamma^{1-|\alpha^{(\ell)}|}ds\\
    &\leq \frac{2b_{j}^2D_jt\norm{f}^2_{C_b^1(\Rd \times \Rd)}}{\gamma}C_{\circ} + \frac{C(K)tL\norm{f}^2_{C_b^0(\Rd\times\Rd)}}{\gamma}C_{\circ}^2. \numberthis \label{Eq:quadraticvariation}
\end{align*}

For $\left\langle M^{f,j}\right\rangle_t$ to be uniformly bounded and vanish in the large population limit, we use Assumptions \ref{A:molarBdd} and \ref{A:kernalBdd}, where $\klg = \gamma^{1-|\alpha^{(\ell)}|}K_\ell,$ and $ K_\ell$ and $\sum_{j=1}^{J}\langle 1, \mu_{t}^{\zeta, j}\rangle$ being uniformly bounded by generic constants $C(K)$ and $C_{\circ}$ respectively.

We now define $M^{f,j}_t = \mathcal{C}_{t}^{f, j}+\mathcal{D}_{t}^{f, j},$ where
\begin{equation*}
\cfjt = \ginverse \sum_{i\geq1} \int_{0}^{t} \mathbbm{1}_{\{i \leq \all\}}b_{j}\sqrt{2 D_{j}} \pfpv\left(\HQ,\HV \right) d W_{s}^{i} \numberthis \label{Eq:continuousmartingale}
\end{equation*}
is the continuous martingale part, and
\begin{equation}\label{dfj}
\begin{aligned}
\dfjt &= \sum_{\ell=1}^{L} \int_{0}^{t} \int_{\mathbb{I}^{(\ell)}} \int_{\yv} \int_{\mathbb{R}_{+}^2}\biggl(\left\langle f, \mu_{s^-}^{\zeta, j}-\frac{1}{\gamma} \sum_{r=1}^{\alpha_{\ell j}} \delta_{H^{i_{r}^{(j)}}_Q(\gamma \mu_{s^-}^{\zeta, j}),H^{i_{r}^{(j)}}_V(\gamma \mu_{s^-}^{\zeta, j})}+\ginverse\sum_{r=1}^{\beta_{\ell j}}\delta_{y_{r}^{(j)},{v'}_{r}^{(j)}} \right\rangle\\
&\hspace{1cm} -\left\langle f, \mu_{s^-}^{\zeta, j}\right\rangle\biggr)\times 1_{\{\bm{i} \in \Omega^{(\ell)}(\gamma\mu_{s^-}^{\zeta})\}} \times 1_{\{\theta_1 \leq K_{\ell}^{\gamma}\left(\pl\right)\}}\times \\
&\hspace{3cm}\times1_{\{\theta_2 \leq m_{\ell}^{\eta}\left(\bm{y},\bm{v'}|\pl\right)\}}d \tilde{N}_{\ell}(s,\bm{i},\bm{y},\bm{v'},\theta_1,\theta_2)
\end{aligned}
\end{equation}
is the martingale part with respect to the Poisson point processes. For simplicity of notation, we define the integrand of $\dfjt$ as
\begin{equation}\label{Eq:poissonprocess}
\begin{aligned}
g^{\ell, f, \mu^{\zeta, j}}&(s, \bm{i}, \bm{y},\bm{v'},\theta_1, \theta_2) = \biggl(\left\langle f, \mu_{s^-}^{\zeta, j}-\frac{1}{\gamma} \sum_{r=1}^{\alpha_{\ell j}} \delta_{H^{i_{r}^{(j)}}_Q(\gamma \mu_{s^-}^{\zeta, j}),H^{i_{r}^{(j)}}_V(\gamma \mu_{s^-}^{\zeta, j})}+\ginverse\sum_{r=1}^{\beta_{\ell j}}\delta_{y_{r}^{(j)},{v'}_{r}^{(j)}} \right\rangle\\
&\phantom{=} -\left\langle f, \mu_{s^-}^{\zeta, j}\right\rangle\biggr)\times 1_{\{\bm{i} \in \Omega^{(\ell)}(\gamma\mu_{s^-}^{\zeta})\}} \times 1_{\{\theta_1 \leq K_{\ell}^{\gamma}\left(\pl\right)\}}\times 1_{\{\theta_2 \leq m_{\ell}^{\eta}\left(\bm{y},\bm{v'}|\pl\right)\}}\\
&=\ginverse\left(-\sum_{r=1}^{\alj}f\left(\HQirj,\HVirj\right)+\sum_{r=1}^{\beta_{\ell j}}f(y_{r}^{(j)},{v'}_{r}^{(j)}) \right)\\
&\phantom{=}\times 1_{\{\bm{i} \in \Omega^{(\ell)}(\gamma\mu_{s^-}^{\zeta})\}} \times 1_{\{\theta_1 \leq K_{\ell}^{\gamma}\left(\pl\right)\}}\times 1_{\{\theta_2 \leq m_{\ell}^{\eta}\left(\bm{y},\bm{v'}|\pl\right)\}},
\end{aligned}
\end{equation}
which represents the jumps and is uniformly bounded by $\mathcal{O}(\frac{1}{\gamma})$. With some abuse of notation we shall write $\bm{g}^{\ell, f, \bm{\mu}^{\zeta}}$ for the vector $(g^{\ell, f, \mu^{\zeta, 1}}, \cdots, g^{\ell, f, \mu^{\zeta, J}})$. Then \eqref{dfj} becomes
$$
\mathcal{D}_{t}^{f, j}=\sum_{\ell=1}^{L} \int_{0}^{t} \int_{\mathbb{I}^{(\ell)}} \int_{\yv} \int_{\mathbb{R}_{+}^2} g^{\ell, f, \mu^{\zeta, j}}(s, \bm{i}, \bm{y},\bm{v'},\theta_1, \theta_2) d \tilde{N}_{\ell}(s,\bm{i},\bm{y},\bm{v'},\theta_1,\theta_2).
$$

Applying Itô's formula (see Theorem $5.1$ in \cite{IkedaWatanabe2014}) to $\Phi(\bm{\mu}_{t}^{\zeta})$ we obtain
\begin{align*}
&\Phi(\bm{\mu}_{t}^{\zeta})-\Phi(\bm{\mu}_{s}^{\zeta})-\int_{s}^{t}(\mathcal{A} \Phi)(\bm{\mu}_{r}^{\zeta}) d r\\
=&\int_{s}^{t} \sum_{m=1}^{M} \sum_{j=1}^{J} \frac{\partial \varphi}{\partial x_{(m-1) \times J+j}}\left(\langle f_{1}, \bm{\mu}_{r}^{\zeta}\rangle,\langle f_{2}, \bm{\mu}_{r}^{\zeta}\rangle \ldots\langle f_{M}, \bm{\mu}_{r}^{\zeta}\rangle\right)d \mathcal{C}_{r}^{f_{j, m}, j}
\\
&+\frac{1}{2} \int_{s}^{t} \sum_{m=1}^{M} \sum_{j=1}^{J} \frac{\partial^{2} \varphi}{\partial x_{(m-1) \times J+j}^{2}}\left(\langle f_{1}, \bm{\mu}_{r}^{\zeta}\rangle,\langle f_{2}, \bm{\mu}_{r}^{\zeta}\rangle \cdots\langle f_{M}, \bm{\mu}_{r}^{\zeta}\rangle\right) d\langle\mathcal{C}^{f_{j, m}, j}\rangle_{r} 
\\
& \phantom{=} +\sum_{\ell=1}^{L} \int_{s}^{t} \int_{\mathbb{I}^{(\ell)}} \int_{\yv} \int_{\mathbb{R}_{+}^2}\biggl(\varphi\bigl(\langle f_{1}, \bm{\mu}_{r}^{\bm{\zeta}}\rangle+\bm{g}^{\ell, f_{1}, \bm{\mu}^{\zeta}}(r, \bm{i}, \bm{y},\bm{v'},\theta_1, \theta_2), \ldots,\langle f_{M}, \bm{\mu}_{r}^{\zeta}\rangle\\
&\phantom{=}+\bm{g}^{\ell, f_{M}, \bm{\mu}^{\zeta}}(r, \bm{i}, \bm{y},\bm{v'},\theta_1, \theta_2)\bigr)-\varphi\left(\langle f_{1}, \bm{\mu}_{r}^{\zeta}\rangle,\langle f_{2}, \bm{\mu}_{r}^{\zeta}\rangle \ldots\langle f_{M}, \bm{\mu}_{r}^{\zeta}\rangle\right)\biggr) d \tilde{N}_{\ell}(r, \bm{i}, \bm{y},\bm{v'},\theta_1, \theta_2) 
\\
& +\sum_{\ell=1}^{L} \int_{s}^{t} \int_{\mathbb{I}^{(\ell)}} \int_{\yv} \int_{\mathbb{R}_{+}^2}\biggl(\varphi\bigl(\langle f_{1}, \bm{\mu}_{r}^{\bm{\zeta}}\rangle+\bm{g}^{\ell, f_{1}, \bm{\mu}^{\zeta}}(r, \bm{i}, \bm{y},\bm{v'},\theta_1, \theta_2), \ldots,\langle f_{M}, \bm{\mu}_{r}^{\zeta}\rangle\\
&\phantom{=}+\bm{g}^{\ell, f_{M}, \bm{\mu}^{\zeta}}(r, \bm{i}, \bm{y},\bm{v'},\theta_1, \theta_2)\bigr) -\varphi\left(\langle f_{1}, \bm{\mu}_{r}^{\zeta}\rangle,\langle f_{2}, \bm{\mu}_{r}^{\zeta}\rangle \ldots\langle f_{M}, \bm{\mu}_{r}^{\zeta}\rangle\right) \\
& \phantom{=}-\sum_{m=1}^{M} \sum_{j=1}^{J} g^{\ell, f_{j, m}, \mu^{\zeta,j}}(r, \bm{i}, \bm{y},\bm{v'},\theta_1, \theta_2)\\
&\phantom{=}\times \frac{\partial \varphi}{\partial x_{(m-1)\times J+j}}\left(\langle f_{1}, \bm{\mu}_{r}^{\zeta}\rangle,\ldots,\langle f_{M}, \bm{\mu}_{r}^{\zeta}\rangle\right)\biggr) d \bar{N}_{\ell}(r, \bm{i}, \bm{y},\bm{v'},\theta_1, \theta_2) \\
&+\sum_{m=1}^{M} \sum_{j=1}^{J} \sum_{\ell=1}^{L}\int_0^t\frac{\partial \varphi}{\partial x_{(m-1) \times J+j}}\left(\left\langle f_{1}, \bm{\mu}_{r}^{\bm{\zeta}}\right\rangle,\left\langle f_{2}, \bm{\mu}_{r}^{\bm{\zeta}}\right\rangle, \ldots,\left\langle f_{M}, \bm{\mu}_{r}^{\bm{\zeta}}\right\rangle\right)\\
&\phantom{=}\times \biggl[\int_{\txtv}\frac{1}{\bm{\alpha}^{(\ell)}!}K_\ell(\bm{x},\bm{v})\times \biggl(\int_{\yv}\left(\sum_{r=1}^{\blj}\fjm\left(y_r^{(j)},{v'}_{r}^{(j)}\right)\right)\\
&\phantom{=}\times \left( m_{\ell}^{\eta}\left(\bm{y},\bm{v'}|\bm{x},\bm{v}\right)-m_{\ell}\left(\bm{y},\bm{v'}|\bm{x},\bm{v}\right)\right)d\bm{y}d\bm{v'}\biggr)\lambda^{(\ell)}[\mu](d\bm{x}, d\bm{v})\biggr]\\
& =\sum_{\kappa=1}^{5} \Lambda_{\kappa}^{\zeta}(t), \numberthis\label{Eq:itoforidentification}
\end{align*}
where $\Lambda_{\kappa}^{\zeta}(t)$ represents the $\kappa$th additive term on the right-hand side. We now use the Skorokhod representation theorem (Theorem $1.8$ in \cite{EthierKurtz}) which, for the purposes of identifying the limit and proving \eqref{Eq:weakconvergence}, allows us to assume that the aforementioned claimed convergence of $\bm{\mu}_{t}^{\bm{\zeta}}$ holds with probability one in the topology of weak convergence of measures. The Skorokhod representation theorem involves the introduction of another probability space, but we ignore this distinction in the notation. To show \eqref{Eq:weakconvergence}, it is then sufficient to prove that the left-hand side of \eqref{Eq:itoforidentification} goes to zero in probability. We now proceed to prove convergence in probability to zero for $\Lambda_{\kappa}^{\zeta}(t)$ for $\kappa=1, \cdots, 5$.

First, note that $\Lambda_{1}^{\zeta}$ and $\Lambda_{3}^{\zeta}$ are both square integrable martingales. In fact, by \eqref{Eq:quadraticvariation} and \eqref{Eq:poissonprocess}, we obtain
\begin{equation*}
    \lim_{\zeta\rightarrow 0}\displaystyle \sup_{t\in [0,T]} \mathbb{E}|\Lambda_{1}^{\zeta}(t) + \Lambda_{3}^{\zeta}(t)|^2 = 0.
\end{equation*}
Similarly, we have by \eqref{Eq:continuousmartingale} that
\begin{equation*}
    \lim_{\zeta\rightarrow 0}\displaystyle \sup_{t\in [0,T]} \mathbb{E}|\Lambda_{2}^{\zeta}(t)| = 0,
\end{equation*}
and by \eqref{Eq:poissonprocess} that
\begin{equation*}
    \lim_{\zeta\rightarrow 0}\displaystyle \sup_{t\in [0,T]} \mathbb{E}|\Lambda_{4}^{\zeta}(t)| = 0.
\end{equation*}
Finally, $\Lambda_{5}^{\zeta}$ goes to zero in probability by Lemma \ref{L:placementDensityDifference}. Consequently, the left-hand side of \eqref{Eq:itoforidentification}
 goes to zero in probability, concluding the proof of the lemma.

\end{proof}
\section{Tightness} \label{S:tightness}
 Recall that $M_{F}(\Rd \times \Rd)$ denotes the space of finite measures endowed with the weak topology, and denote by $M_{F}'(\Rd \times \Rd)$ the space of finite measures endowed with the vague topology. In this section, we prove tightness of the measure-valued processes $\{\mu_{t}^{\zeta, j}\}_{t \in[0, T]}, j=1,2, \cdots, J$ on $\mathbb{D}_{M_{F}(\Rd \times \Rd)}[0, T]$, the space of c\`adl\`ag paths with values in $M_{F}(\Rd \times \Rd)$ endowed with Skorokhod topology. Towards this aim, we first show that the processes $\{\mu_{t}^{\zeta, j}\}_{t \in[0, T]}, j=1, \cdots, J$, are tight on $\mathbb{D}_{M_{F}'(\Rd \times \Rd)}[0, T]$ in Lemma \ref{L:vagueTightness} followed by tightness on $\mathbb{D}_{M_{F}(\Rd \times \Rd)}[0, T]$ in Theorem \ref{T:MainTightnessTheorem}.

\subsubsection{Tightness in $\mathbb{D}_{M'_{F}(\mathbb{R}^d \times \Rd)}[0,T]$.} It suffices to show the real-valued processes $\{\langle f, \mu_{t}^{\zeta, j}\rangle\}, j=1, \cdots, J$, for any test function $f \in C_0^{2}(\mathbb{R}^{d}\times \mathbb{R}^{d})$, which is dense in $C_0(\mathbb{R}^{d} \times \mathbb{R}^{d})$, are tight in $\mathbb{D}_{\mathbb{R}}[0, T]$, see \cite{Coppoletta1986}. In establishing the above statement, we use analogs of the Rebolledo Criterion \cite{Joffe1986} and the Aldous Condition \cite{Aldous1978}.

\begin{lemma}\label{L:AldousA_M}
    For any $T>0$ and $\delta>0$, there exists generic constants $C$ such that for any pair of stopping times $(\sigma, \tau)$ with $0 \leq \sigma \leq \tau \leq \sigma+\delta \leq T$, we have for $j=1, \cdots, J$,
$$
\mathbb{E}\left[\langle M^{f, j}\rangle_{\tau}-\langle M^{f, j}\rangle_{\sigma}\right] \leq C \delta,
$$
\end{lemma}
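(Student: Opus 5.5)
The plan is to reuse the explicit expression for the predictable quadratic variation $\langle M^{f,j}\rangle_t$ computed in the proof of Lemma \ref{Eq:weakconvergence}. There the quadratic variation is written as a time integral, $\int_0^t (\cdots)\,ds$, of a nonnegative integrand. Hence for stopping times $\sigma\le\tau\le\sigma+\delta$ we have
\[
\langle M^{f,j}\rangle_{\tau}-\langle M^{f,j}\rangle_{\sigma}=\int_{\sigma}^{\tau}(\cdots)\,ds,
\]
and the whole task reduces to bounding this integrand pathwise, uniformly in $s$ and $\zeta$.

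The integrand has two contributions, and I will bound each pathwise.

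\textbf{Brownian part.} This contribution is
\[
\frac{1}{\gamma}\left\langle 2b_j^2D_j\left(\pfpv\right)^2,\mu_{s^-}^{\zeta,j}\right\rangle \le \frac{2b_j^2D_j\norm{f}^2_{C_b^1}}{\gamma}\,C_\circ ,
\]
by Assumption \ref{A:molarBdd}.

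\textbf{Poisson part.} The squared jump size is at most $(\alj+\blj)^2\norm{f}^2_{C_b^0}\le 16\norm{f}^2_{C_b^0}$. The placement density $m_\ell^\eta$ integrates to one in $(\bm{y},\bm{v'})$ by Assumption \ref{A:PlacementDensity}, and $K_\ell^\gamma\le \gamma^{1-|\bm\alpha^{(\ell)}|}C(K)$ by Assumptions \ref{A:kernalBdd} and \ref{A:KernelScaling}. The cardinality of $\Omega^{(\ell)}(\gamma\mu^\zeta_{s^-})$ is at most $(\gamma C_\circ)^{|\bm\alpha^{(\ell)}|}$. Multiplying these, the factor $\gamma^{-2}\gamma^{1-|\bm\alpha^{(\ell)}|}\gamma^{|\bm\alpha^{(\ell)}|}$ collapses to $\gamma^{-1}$ regardless of the reaction order. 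The Poisson contribution is therefore at most $C(K)L\norm{f}^2_{C_b^0}C_\circ^2/\gamma$.

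Summing, the integrand is bounded by a deterministic constant $C=C(f,K,L,C_\circ,b_j,D_j)$, uniformly for $\gamma\ge1$. It even carries an extra factor $1/\gamma$, which we may simply drop. Integrating over $[\sigma,\tau]$, whose length is at most $\delta$, gives
\[
\langle M^{f,j}\rangle_\tau-\langle M^{f,j}\rangle_\sigma\le C\delta \quad\text{almost surely},
\]
and taking expectations yields the claim.

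The only point needing care is the counting step in the Poisson part. For $2S_j$ reactions the index set $\Omega^{(\ell)}$ has at most $\binom{N_j}{2}\le N_j^2$ elements, and for $S_j+S_k$ reactions it has $N_jN_k\le(\gamma C_\circ)^2$ elements, so in both cases the $\gamma$ powers cancel exactly as above. This cancellation is exactly the role of Assumption \ref{A:KernelScaling}. Beyond this, I expect no real obstacle: since the bound holds pathwise, the stopping-time structure plays no role beyond $\tau-\sigma\le\delta$.
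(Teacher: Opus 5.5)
Your proposal is correct and follows the paper's own route: write the increment of $\langle M^{f,j}\rangle$ as $\int_\sigma^\tau$ of the integrand from \eqref{Eq:quadraticvariation}, and bound that integrand by a $\zeta$-independent constant times $1/\gamma$. The Brownian part is handled by Assumption~\ref{A:molarBdd}; the Poisson part uses the kernel bound, the $\gamma^{1-|\bm{\alpha}^{(\ell)}|}$ scaling, the normalization of $m_\ell^\eta$ and the count $|\Omega^{(\ell)}|\le(\gamma C_\circ)^{|\bm{\alpha}^{(\ell)}|}$, after which $\tau-\sigma\le\delta$ finishes the argument. Your bound is pathwise before taking expectations and tracks the $C_\circ^2$ factor in the Poisson term, so it is slightly more careful than the constant in the paper's displayed estimate, but it is the same argument.
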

\begin{proof}[Proof of Lemma \ref{L:AldousA_M}]
The upper bound for $\mathbb{E}\left[\langle M^{f, j}\rangle_{\tau}-\langle M^{f, j}\rangle_{\sigma}\right]$ follows essentially the same arguments as in~\cite{IsaacsonSIMA2022}, which we give below for completeness:
\begin{align*}
\mathbb{E}&\left[\langle M^{f, j}\rangle_{\tau}-\langle M^{f, j}\rangle_{\sigma}\right]=\frac{1}{\gamma^{2}} \mathbb{E}\left[\int_{\sigma}^{\tau} \sum_{i=1}^{\gamma\langle 1, \mu_{s-}^{\zeta, j}\rangle}\left(b_{j}\sqrt{2 D_{j}} \frac{\partial f}{\partial V}\left(\HQ,\HV\right)\right)^{2} d s\right] \\
&\phantom{=}+\sum_{\ell=1}^{L} \mathbb{E}\biggl[\int_{\sigma}^{\tau} \int_{\yv} \frac{1}{\gamma^{2}}\sum_{\bm{i} \in \Omega^{(\ell)}(\gamma\mu_{s^-}^{\zeta})}\biggl(-\sum_{r=1}^{\alpha_{\ell j}} f\left(\HQirj,\HVirj\right)\\
&\phantom{=}+\sum_{r=1}^{\beta_{\ell j}}f\left(y_r^{(j)},{v'}_{r}^{(j)}\right) \biggr)^2 \times K_{\ell}^{\gamma}\left(\mathcal{P}^{(\ell)}(\gamma \mu_{s-}^{\zeta}, i)\right) m_{\ell}^{\eta}\left(\bm{y}, \bm{v'} | \mathcal{P}^{(\ell)}(\gamma \mu_{s-}^{\zeta}, \bm{i})\right)d \bm{y} d \bm{v'}d s\biggr] \\
&\leq \frac{C_{\circ} \mathbb{E}[\tau-\sigma]\|f\|_{C_0^{1}(\Rd \times \Rd)}^{2}}{\gamma}\left(C(D_{j})b_{j}^2+C(K) L\right) \\
&\leq C \delta.
\end{align*}
\end{proof}

To obtain an analogous upper bound on $\mathbb{E}\left[\left|A_{\tau}^{f, j}-A_{\sigma}^{f, j}\right|^{2}\right]$ will require us to first establish a-priori uniform bounds in $\zeta$ of appropriate moments with respect to both the position and velocity components of the empirical measure $\mu_{t}^{\bm{\zeta}, j}$ for $j=1,\cdots, J$. This is one of the technical difficulties seen in the underdamped case studied in this paper, which does not appear in the overdamped case of \cite{IsaacsonSIMA2022}.

Note that Assumption \ref{A:AssumptionMomentFiniteness} does not assume that $\sum_{j=1}^{J}\mathbb{E}\la |x|^{4} +|v|^{4},\mu_{t}^{\zeta, j}(dx,dv)\ra \newline<\infty$ is uniformly bounded with respect to $\zeta\in \mathbb{R}_{+} \times \mathbb{R}_{+}$ (apart from the requirement of a uniform bound at time $t=0$). Instead, Assumption \ref{A:AssumptionMomentFiniteness} only assumes that for each $\zeta\in\mathbb{R}_{+} \times \mathbb{R}_{+} $, the $4$th-moment is finite. We prove in Lemma \ref{L:MomentBounds} that these moments (assuming that they are well defined per Assumption \ref{A:AssumptionMomentFiniteness}) are indeed uniformly bounded in $\zeta\in\mathbb{R}_{+} \times \mathbb{R}_{+}$.

\begin{lemma}\label{L:AldousA_A}
    For any $T>0$ and $\delta>0$, there exists generic constants $C$ such that for any pair of stopping times $(\sigma, \tau)$ with $0 \leq \sigma \leq \tau \leq \sigma+\delta \leq T$, we have for $j=1, \cdots, J$,
$$
\mathbb{E}\left[\left|A_{\tau}^{f, j}-A_{\sigma}^{f, j}\right|^{2}\right] \leq C \delta.
$$
\end{lemma}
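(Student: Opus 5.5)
We split $A_{\tau}^{f,j}-A_{\sigma}^{f,j}$ into a transport part and a reaction part,
\begin{align*}
A_{\tau}^{f,j}-A_{\sigma}^{f,j} = \int_\sigma^\tau \left\langle \mathcal{L}_j f, \msmzj\right\rangle ds + \int_\sigma^\tau R^{f,j}_s\, ds,
\end{align*}
where $R^{f,j}_s$ denotes the sum over $\ell$ of the reaction integrals in the definition of $\At$. We then use $(a+b)^2\le 2a^2+2b^2$ together with Cauchy--Schwarz in time, $\bigl(\int_\sigma^\tau h\,ds\bigr)^2 \le (\tau-\sigma)\int_\sigma^\tau h^2 ds \le \delta \int_0^T h^2\,ds$. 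This reduces the claim to showing that $\mathbb{E}\int_0^T |\langle \mathcal{L}_j f,\msmzj\rangle|^2 ds$ and $\mathbb{E}\int_0^T |R^{f,j}_s|^2 ds$ are bounded uniformly in $\zeta$.

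The reaction part is the easier of the two. Since $m_\ell^\eta$ is a probability density in $(\bm{y},\bm{v'})$ (Assumption~\ref{A:PlacementDensity}), the inner $\bm{y},\bm{v'}$ integral is bounded by $\beta_{\ell j}\norm{f}_{C_b^0}$. Then $K_\ell\le C(K)$ (Assumption~\ref{A:kernalBdd}) and $\lambda^{(\ell)}[\mu_{s-}^\zeta](\txtv)\le C_\circ^{|\bm\alpha^{(\ell)}|}$ (Assumption~\ref{A:molarBdd}) give $|R^{f,j}_s|\le C(K)\,L\,4\,C_\circ^2\norm{f}_{C_b^0}$ pathwise. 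This contributes at most $C\delta^2\le CT\delta$.

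The main obstacle is the transport term. The operator $\mathcal{L}_j f = v\cdot\nabla_x f - b_j v\cdot\nabla_v f + b_j^2D_j\Delta_v f$ has coefficients growing linearly in $v$, so it is not bounded even for $f\in C^2_0$ in general, and in any case the estimate should hold uniformly in $\zeta$. We bound
\begin{align*}
|\langle \mathcal{L}_j f,\msmzj\rangle| \le C\norm{f}_{C_b^2}\bigl(\langle 1,\msmzj\rangle + \langle |v|,\msmzj\rangle\bigr),
\end{align*}
and then apply Jensen/Cauchy--Schwarz with respect to the finite measure $\msmzj$: $\langle |v|,\msmzj\rangle^2 \le \langle 1,\msmzj\rangle\langle |v|^2,\msmzj\rangle \le C_\circ \langle 1+|v|^4,\msmzj\rangle$. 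It then remains to invoke the uniform-in-$\zeta$ moment bound of Lemma~\ref{L:MomentBounds}, namely $\sup_{\zeta}\sup_{t\le T}\sum_j\mathbb{E}\langle |x|^4+|v|^4,\mu_t^{\zeta,j}\rangle<\infty$, which is established later in Section~\ref{S:MomentBounds} and which we take as given. That lemma is where the real work lies. It is proved by applying the weak representation \eqref{Eq:PathDescription} to truncations of $|x|^4+|v|^4$, using the friction term to control velocity growth, and using momentum conservation (Assumption~\ref{A:conservationofMomentum}) together with the fourth moment of $\rho$ (Assumption~\ref{A:AssumptionRho}) to control the placement terms through Lemma~\ref{L:boundoffm}, followed by Gronwall.

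Combining the two parts, $\mathbb{E}\bigl|A_\tau^{f,j}-A_\sigma^{f,j}\bigr|^2 \le 2\delta\, C\norm{f}^2_{C_b^2}\, T\,(C_\circ^2+C_\circ\sup_{\zeta,s}\mathbb{E}\langle 1+|v|^4,\mu_s^{\zeta,j}\rangle) + C\delta^2 \le C\delta$, with $C$ independent of $\zeta$ and of the stopping times. Stopping times cause no difficulty, because the Cauchy--Schwarz step only uses $\tau-\sigma\le\delta$ pathwise and the remaining integrand is integrated over all of $[0,T]$.
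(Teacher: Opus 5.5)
Your proof is correct and follows essentially the paper's argument: the same transport/reaction split, Cauchy--Schwarz in time, a pathwise bound on the reaction term from bounded $K_\ell$, normalized $m_\ell^\eta$ and Assumption~\ref{A:molarBdd}, and a uniform-in-$\zeta$ velocity moment from Lemma~\ref{L:MomentBounds} for the transport term. The paper invokes that lemma with $p=1$, i.e.\ second moments, which suffices directly, so your detour through $|v|^4$ is unnecessary but harmless. One caution about your aside: Lemma~\ref{L:MomentBounds} is not proved via Lemma~\ref{L:boundoffm} or truncations. In fact Lemma~\ref{L:boundoffm} itself uses Lemma~\ref{L:MomentBounds}, so that description would be circular, though it plays no role in your argument.
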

\begin{proof}[Proof of Lemma \ref{L:AldousA_A}]
We shall decompose $A_{t}^{f, j}$ into two terms. In particular we write that $A_{t}^{f, j}=A_{t}^{f, j,1}+A_{t}^{f, j,2}$ where
\begin{align*}
     A_{t}^{f, j,1}&= \int_0^t \left\langle (\mathcal{L}_{j}f)(x,v), \msmzj(dx,dv)\right\rangle ds \\
    A_{t}^{f, j,2}&=\sum_{\ell=1}^{L}\int_0^t \int_{\txtv}\frac{1}{\bm{\alpha}^{(\ell)}!}K_{\ell}(\bm{x},\bm{v})\biggl(-\sum_{r=1}^{\alj}f\left(x_r^{(j)},v_r^{(j)}\right)+\int_{\yv}\left(\sum_{r=1}^{\blj}f\left(y_r^{(j)},{v'}_{r}^{(j)}\right)\right)\\
&\phantom{=}\times m_{\ell}^{\eta}\left(\bm{y},\bm{v'}|\bm{x},\bm{v}\right)d\bm{y}d\bm{v'}\biggr)\lmeasure ds,
\end{align*}
By triangle inequality, we have that
\begin{align}
\mathbb{E}\left[\left|A_{\tau}^{f, j}-A_{\sigma}^{f, j}\right|^{2}\right]&\leq 2 \mathbb{E}\left[\left|A_{\tau}^{f, j,1}-A_{\sigma}^{f, j,1}\right|^{2}\right]+2\mathbb{E}\left[\left|A_{\tau}^{f, j,2}-A_{\sigma}^{f, j,2}\right|^{2}\right].
\end{align}

Let us first focus on the term $\mathbb{E}\left[\left|A_{\tau}^{f, j,1}-A_{\sigma}^{f, j,1}\right|^{2}\right]$. To properly bound this term, we use the bound $\la 1, \mu_t^{{\zeta}, j} \ra \leq C_{\circ}$ from Assumption \ref{A:molarBdd} and Lemma \ref{L:MomentBounds} with $p=1$. Then, we have
\begin{align}
\mathbb{E}\left[\left|A_{\tau}^{f, j,1}-A_{\sigma}^{f, j,1}\right|^{2}\right]&=\mathbb{E}\left|\int_{\sigma}^{\tau} \left\langle (\mathcal{L}_{j}f)(x,v), \msmzj(dx,dv)\right\rangle ds\right|^{2}\nonumber\\
&\leq\delta \mathbb{E}\int_{\sigma}^{\tau} \left|\left\langle (\mathcal{L}_{j}f)(x,v), \msmzj(dx,dv)\right\rangle \right|^{2} ds\nonumber\\
&=\delta \mathbb{E}\int_{\sigma}^{\tau} \left|\left\langle v \nabla_{x}f(x,v) - b_{j} v \nabla_{v}f(x,v) +  b_{j}^2 D_j \Delta_{v} f(x,v), \msmzj(dx,dv)\right\rangle \right|^{2} ds\nonumber\\
&\leq C (D_j, b_j, C_{\circ}) \delta  \|f\|^{2}_{C^{2}_0(\Rd \times \Rd)} \mathbb{E}\int_{\sigma}^{\tau} \left\langle |v|^{2} +1, \msmzj(dx,dv)\right\rangle  ds\nonumber\\
&\leq C (D_j, b_j, C_{\circ}) \delta  \|f\|^{2}_{C^{2}_0(\Rd \times \Rd)}\left[ \mathbb{E}\int_{0}^{T} \left\langle |v|^{2}, \msmzj(dx,dv)\right\rangle  ds+\delta C_{\circ}\right]\nonumber\\
&\leq C(D_j, b_{j},C_{\circ}) \delta  \|f\|^{2}_{C^{2}_0(\Rd \times \Rd)}\left[ \int_{0}^{T} \theta^{\zeta,j,2}  ds+\delta\right],
\end{align}
which by Assumption \ref{A:AssumptionMomentFiniteness} and Lemma \ref{L:MomentBounds} with $p=1$ leads to the bound
\begin{align}
\mathbb{E}\left[\left|A_{\tau}^{f, j,1}-A_{\sigma}^{f, j,1}\right|^{2}\right]&\leq C \delta  \|f\|^{2}_{C^{2}_0(\Rd \times \Rd)}
\end{align}


We prove a similar bound for $\mathbb{E}\left[\left|A_{\tau}^{f, j,2}-A_{\sigma}^{f, j,2}\right|^{2}\right]$. Indeed, we obtain
\begin{align*}
    &\mathbb{E}\left[\left|A_{\tau}^{f, j,2}-A_{\sigma}^{f, j,2}\right|^{2}\right] =  \mathbb{E}\biggl[\biggl|\sum_{\ell=1}^{L}\int_\sigma^\tau \int_{\txtv}\frac{1}{\bm{\alpha}^{(\ell)}!}K_{\ell}(\bm{x},\bm{v})\biggl(-\sum_{r=1}^{\alj}f\left(x_r^{(j)},v_r^{(j)}\right)\\
&\phantom{=}+\int_{\yv}\left(\sum_{r=1}^{\blj}f\left(y_r^{(j)},{v'}_{r}^{(j)}\right)\right)m_{\ell}^{\eta}\left(\bm{y},\bm{v'}|\bm{x},\bm{v}\right)d\bm{y}d\bm{v'}\biggr)\lmeasure ds\biggr|^2\biggr] \\
&\leq C(L) \sum_{\ell = 1}^L\mathbb{E}\biggl[\left|\tau-\sigma \right|\int_\sigma^\tau \int_{\txtv}\biggl|\frac{1}{\bm{\alpha}^{(\ell)}!}K_{\ell}(\bm{x},\bm{v})\biggl(-\sum_{r=1}^{\alj}f\left(x_r^{(j)},v_r^{(j)}\right)\\
&\phantom{=}+\int_{\yv}\left(\sum_{r=1}^{\blj}f\left(y_r^{(j)},{v'}_{r}^{(j)}\right)\right)m_{\ell}^{\eta}\left(\bm{y},\bm{v'}|\bm{x},\bm{v}\right)d\bm{y}d\bm{v'}\biggr)\biggr|^2\lmeasure\\
&\phantom{=}\times \int_{\txtv} 1^2 \lmeasure ds\biggr] \\
&\leq C(L,K, C_{\circ}) \norm{f}^{2}_{C^{0}_0(\Rd \times \Rd)} \sum_{\ell = 1}^L \mathbb{E}\biggl[\left|\tau-\sigma \right|\int_\sigma^\tau \int_{\txtv}\lmeasure ds\biggr]\\
&\leq C(L,K, C_{\circ}) \norm{f}^{2}_{C^{0}_0(\Rd \times \Rd)} \mathbb{E}\left[\left|\tau-\sigma \right|^2\right]\\
&\leq C(L,K,C_{\circ})\delta \norm{f}^{2}_{C^{0}_0(\Rd \times \Rd)}.
\end{align*}
Here we have absorbed terms involving $L$ and $C_{\circ}$ into the generic constant $C(L,K,C_{\circ})$ from line to line.
\end{proof}

\begin{lemma}\label{L:Aldous}  For any $T>0, \epsilon_{1}>0, \epsilon_{2}>0, j=1,2, \cdots, J$, there exists $\delta>0$ and $n_{0} \in \mathbb{N}$ such that for any sequence $(\sigma_{n}, \tau_{n})_{n \in \mathbb{N}}$ of pairs of stopping times with $\sigma_{n} \leq \tau_{n} \leq T$,
$$
\sup _{n \geq n_{0}} \mathbb{P}\{|\langle f, \mu_{\sigma_{n}}^{n, j}\rangle-\langle f, \mu_{\tau_{n}}^{n, j}\rangle| \geq \epsilon_{2}, \tau_{n} \leq \sigma_{n}+\delta\} \leq \epsilon_{1}.
$$
\end{lemma}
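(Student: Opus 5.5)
This is the Aldous condition for the real-valued processes $\langle f,\mu_t^{n,j}\rangle$, $f\in C_0^2(\Rd\times\Rd)$, and I would derive it directly from Lemmas \ref{L:AldousA_M} and \ref{L:AldousA_A} via Chebyshev's inequality. First I reduce to the case $\tau_n\le\sigma_n+\delta$ holding surely: set $\tilde\tau_n:=\min(\tau_n,\sigma_n+\delta,T)$, which is again a stopping time with $\sigma_n\le\tilde\tau_n\le\sigma_n+\delta$. On the event $\{\tau_n\le\sigma_n+\delta\}$ we have $\tilde\tau_n=\tau_n$. The probability in the statement is therefore bounded by $\mathbb{P}\{|\langle f,\mu^{n,j}_{\tilde\tau_n}\rangle-\langle f,\mu^{n,j}_{\sigma_n}\rangle|\ge\epsilon_2\}$. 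If $\sigma_n+\delta>T$, I would simply rerun the proofs of the two lemmas with $\tilde\tau_n$, since they only use $\tilde\tau_n-\sigma_n\le\delta$.

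Next I use the decomposition $\langle f,\mu_t^{\zeta,j}\rangle=\langle f,\mu_0^{\zeta,j}\rangle+A_t^{f,j}+M_t^{f,j}$ from the proof of Lemma \ref{Eq:weakconvergence}. This gives
\[
\mathbb{P}\{|\langle f,\mu^{n,j}_{\tilde\tau_n}\rangle-\langle f,\mu^{n,j}_{\sigma_n}\rangle|\ge\epsilon_2\}\le \mathbb{P}\{|A^{f,j}_{\tilde\tau_n}-A^{f,j}_{\sigma_n}|\ge\epsilon_2/2\}+\mathbb{P}\{|M^{f,j}_{\tilde\tau_n}-M^{f,j}_{\sigma_n}|\ge\epsilon_2/2\}.
\]
I bound the first term by Chebyshev's inequality and Lemma \ref{L:AldousA_A}, which gives at most $4C\delta/\epsilon_2^2$. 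For the second term, $M^{f,j}$ is a square-integrable martingale, so optional stopping (applied to the bounded stopping times $\sigma_n\le\tilde\tau_n\le T$) yields
\[
\mathbb{E}\bigl|M^{f,j}_{\tilde\tau_n}-M^{f,j}_{\sigma_n}\bigr|^2=\mathbb{E}\bigl[\langle M^{f,j}\rangle_{\tilde\tau_n}-\langle M^{f,j}\rangle_{\sigma_n}\bigr].
\]
By Lemma \ref{L:AldousA_M} this is at most $C\delta$, so Chebyshev's inequality bounds the second term by $4C\delta/\epsilon_2^2$.

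The constants $C$ in Lemmas \ref{L:AldousA_M} and \ref{L:AldousA_A} are uniform in $\zeta$. This follows from Assumptions \ref{A:molarBdd} and \ref{A:kernalBdd} together with the uniform moment bound of Lemma \ref{L:MomentBounds}, which Lemma \ref{L:AldousA_A} relies on. Choosing $\delta=\epsilon_1\epsilon_2^2/(8C)$ makes the total probability at most $\epsilon_1$ for every $n$, so any $n_0$ works, for instance $n_0=1$.

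The main point requiring care is the optional stopping step. It needs $M^{f,j}$ to be a genuine $L^2$ martingale rather than merely a local one, which holds because the bound \eqref{Eq:quadraticvariation} on $\langle M^{f,j}\rangle_T$ is uniform. The other delicate point is the uniformity in $n$ of the constant in Lemma \ref{L:AldousA_A}, which rests entirely on the uniform moment bounds. Everything else is routine.
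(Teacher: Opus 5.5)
Your proposal is correct and follows the same route as the paper: you split the increment of $\langle f,\mu^{n,j}\rangle$ into the martingale and drift increments, apply Markov/Chebyshev, and invoke Lemmas \ref{L:AldousA_M} and \ref{L:AldousA_A}, whose constants are uniform in $\zeta$. Your truncation $\tilde\tau_n=\tau_n\wedge(\sigma_n+\delta)$ and the explicit optional-stopping identity $\mathbb{E}|M^{f,j}_{\tilde\tau_n}-M^{f,j}_{\sigma_n}|^2=\mathbb{E}[\langle M^{f,j}\rangle_{\tilde\tau_n}-\langle M^{f,j}\rangle_{\sigma_n}]$ make rigorous two steps that the paper's one-line Markov-inequality argument leaves implicit.
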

\begin{proof}
    For $\tau_{n} \leq \sigma_{n}+\delta$, we have
$$
\begin{aligned}
&\sup _{n \geq n_{0}} \mathbb{P}\{|M_{\sigma_{n}}^{f, j}-M_{\tau_{n}}^{f, j}|+|A_{\sigma_{n}}^{f, j}-A_{\tau_{n}}^{f, j}| \geq \epsilon_{2}\} \\
&\leq \sup _{n \geq n_{0}} 2 \frac{1}{\epsilon_{2}^{2}} \mathbb{E}[\langle M^{f, j}\rangle_{\tau_{n}}-\langle M^{f, j}\rangle_{\sigma_{n}}+|A_{\tau_{n}}^{f, j}-A_{\sigma_{n}}^{f, j}|^{2}] \quad \text { (by Markov inequality) } \\
&\leq \frac{2}{\epsilon_{2}^{2}}C \delta \leq \epsilon_{1} \quad \text { (by Lemmas } \ref{L:AldousA_M} \text { and } \ref{L:AldousA_A} \text{ with } \delta \text { sufficiently small) }.
\end{aligned}
$$
\end{proof}

\begin{lemma} \label{L:vagueTightness}
For each $j=1, \cdots, J$, the sequence of real-valued processes $\{\langle f, \mu_{t}^{\zeta, j}\rangle\}_{\zeta \in(0,1)^{2}}$ is tight in $\mathbb{D}_{\mathbb{R}}[0, T]$.
\end{lemma}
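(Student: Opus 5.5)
To prove Lemma \ref{L:vagueTightness} we apply the Aldous criterion for real-valued càdlàg processes (see \cite{Aldous1978}, or Theorem 2.2.2 of \cite{Joffe1986}). For a fixed $f \in C_0^2(\mathbb{R}^d\times\mathbb{R}^d)$ and a fixed $j$, the process $\{\langle f,\mu_t^{\zeta,j}\rangle\}$ is tight in $\mathbb{D}_{\mathbb{R}}[0,T]$ once two conditions hold. Condition (i) is the compact containment of the one-dimensional marginals: for each $t\in[0,T]$ (or on a dense set of times), the laws of $\langle f,\mu_t^{\zeta,j}\rangle$ are tight in $\mathbb{R}$. Condition (ii) is the Aldous condition on increments between stopping times. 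Condition (ii) is exactly the content of Lemma \ref{L:Aldous}, so most of the work is already done. It remains to check (i) and to note that the criterion applies to the family indexed by $\zeta\in(0,1)^2$; we do this by passing to arbitrary sequences $\zeta_n\to 0$, which is how Lemma \ref{L:Aldous} is phrased.

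For (i) the argument is a direct uniform bound. By Assumption \ref{A:molarBdd},
\[
\sup_{t\in[0,T]}\bigl|\langle f,\mu_t^{\zeta,j}\rangle\bigr| \le \|f\|_{C_b^0(\mathbb{R}^d\times\mathbb{R}^d)}\,\langle 1,\mu_t^{\zeta,j}\rangle \le C_{\circ}\,\|f\|_{C_b^0(\mathbb{R}^d\times\mathbb{R}^d)}
\]
almost surely, uniformly in $\zeta$. Hence all the processes take values in the compact interval $[-C_{\circ}\|f\|_{C_b^0},\,C_{\circ}\|f\|_{C_b^0}]$, and condition (i) holds trivially, in fact uniformly over the whole path.

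For (ii) we use the semimartingale decomposition $\langle f,\mu_t^{\zeta,j}\rangle=\langle f,\mu_0^{\zeta,j}\rangle+A_t^{f,j}+M_t^{f,j}$. The initial term cancels in increments. Lemma \ref{L:Aldous}, which combines Lemmas \ref{L:AldousA_M} and \ref{L:AldousA_A} through Markov's inequality, then gives the required bound
\[
\sup_{n\ge n_0}\mathbb{P}\bigl\{|\langle f,\mu^{n,j}_{\sigma_n}\rangle-\langle f,\mu^{n,j}_{\tau_n}\rangle|\ge\epsilon_2,\ \tau_n\le\sigma_n+\delta\bigr\}\le\epsilon_1 .
\]
The finitely many indices $n<n_0$ cause no difficulty. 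Each individual process is càdlàg, so a single process trivially satisfies the Aldous condition after shrinking $\delta$, and we take the minimum of these finitely many $\delta$'s.

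The only genuinely delicate point is that Lemma \ref{L:AldousA_A} uses the uniform-in-$\zeta$ moment bound of Lemma \ref{L:MomentBounds} to control the transport term $\langle v\nabla_x f - b_jv\nabla_v f,\mu^{\zeta,j}_s\rangle$. This term is unbounded in $v$ in general, although for $f\in C_0^2$ it is in fact bounded, since $v\nabla f$ is compactly supported. So for compactly supported test functions even this step is elementary. We therefore expect no real obstacle here: the lemma follows by combining the a.s.\ mass bound with Lemma \ref{L:Aldous} and citing the Aldous–Rebolledo criterion.
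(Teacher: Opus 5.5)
Your proposal is correct and follows the paper's proof. The uniform mass bound of Assumption~\ref{A:molarBdd} gives compact containment: you use the almost-sure bound, where the paper uses $\sup_{\zeta}\mathbb{E}[\sup_{t\in[0,T]}|\langle f,\mu_t^{\zeta,j}\rangle|]<\infty$. Lemma~\ref{L:Aldous} then supplies the Aldous condition needed for the Aldous--Rebolledo criterion.

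One small correction: your claim that a single c\`adl\`ag process ``trivially'' satisfies the Aldous condition is false in general, since a deterministic process with a jump at a fixed time violates it. That step is unnecessary anyway, because the criterion only requires $n\ge n_0$, and the constants in Lemmas~\ref{L:AldousA_M} and~\ref{L:AldousA_A} are already uniform in $\zeta$.
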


\begin{proof}
    By Assumption \ref{A:molarBdd}, $\langle 1, \mu_{t}^{\zeta, j}\rangle, j=1, \cdots, J$ are uniformly bounded. Since $f \in C_{0}^{2}(\Rd \times \Rd)$, we then get
$$
\sup _{\|\zeta\| \leq 1} \mathbb{E}[\sup _{t \in[0, T]} |\langle f, \mu_{t}^{\zeta, j}\rangle | ]<\infty.
$$
Combined with the Aldous condition from Lemma
\ref{L:Aldous}, we obtain that the sequence of real-valued processes $\{\langle f, \mu_t^{\zeta,j}\rangle\}_{\zeta \in (0,1)^2},$ is tight in $\mathbb{D}_\mathbb{R}[0,T]$ by the Rebolledo Criterion \cite{Joffe1986} for each $j = 1,2,\dots,J.$
\end{proof}

\subsubsection{Tightness in $\mathbb{D}_{M_{F}(\mathbb{R}^d \times \Rd)}[0,T]$.} Using the next Lemma \ref{L:massOutofCompact}, we manage to control the mass of measures outside of compact sets so that we can go from tightness in $\mathbb{D}_{M_{F}'(\Rd \times \Rd)}[0, T]$ to tightness in $\mathbb{D}_{M_{F}(\Rd \times \Rd)}[0, T]$.


\begin{lemma} \label{L:massOutofCompact}
There exists a sequence of $C_{b}^{2}(\Rd \times \Rd)$ functions $\{f_{m}(x,v)\}_{m \geq 0}$ with $f_{0} \equiv 1$ such that
$$
\begin{aligned}
f_{m}(x,v) &=0 \text { when }\|x\| + \|v\| \leq m-1 \\
f_{m}(x,v) &=1 \text { when }\|x\|+ \|v\|>m \\
0 \leq f_{m}(x,v) & \leq 1 \text { when } m-1<\|x\| + \|v\|\leq m .
\end{aligned}
$$
Additionally, $\displaystyle\sup _{m \geq 0}\|f_{m}(x,v)\|_{C_{b}^{2}(\Rd \times \Rd)}:=\sup _{m \geq 0} \mkern22mu \smashoperator{\sup_{\substack{ x \in \mathbb{R}^{d},v \in \mathbb{R}^{d},\\ |\alpha| \leq 2\strut}}}|D_{\alpha} f_{m}(x,v)|<\infty$. For such a sequence of functions $\{f_{m}(x,v)\}_{m \geq 0}$, we have
$$
\lim _{m \rightarrow \infty} \limsup _{\zeta \rightarrow 0} \mathbb{E}\left[\sup _{t \in[0, T]}\langle f_{m}, \mu_{t}^{\zeta, j}\rangle \right]=0
$$
for all $j=1,2, \cdots, J$.
\end{lemma}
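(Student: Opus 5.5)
The plan is to construct $f_m$ from a fixed smooth cutoff and then run the semimartingale decomposition \eqref{Eq:PathDescription} with $f=f_m$, controlling each piece uniformly in $\zeta$.

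\emph{Construction of $f_m$.} Take a smooth nondecreasing $\chi:\R\to[0,1]$ with $\chi(r)=0$ for $r\le 0$ and $\chi(r)=1$ for $r\ge 1$. Since $\|x\|+\|v\|$ is not smooth at $x=0$ or $v=0$, replace it by the smooth comparable quantity $\phi(x,v)=\sqrt{1+|x|^2}+\sqrt{1+|v|^2}-2$. Set
\[
f_m(x,v)=\chi\bigl(\phi(x,v)-(m-1)\bigr).
\]
If needed, shift the argument by a constant so that the support conditions hold exactly. Because $\phi$ has bounded first and second derivatives, the $C^2_b$ norms of $f_m$ are bounded uniformly in $m$. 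Moreover, $\nabla f_m$ and $D^2 f_m$ are supported in the shell $\{m-2\le\|x\|+\|v\|\le m+1\}$.

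\emph{Decomposition.} Write $\langle f_m,\mu^{\zeta,j}_t\rangle=\langle f_m,\mu^{\zeta,j}_0\rangle+A^{f_m,j}_t+M^{f_m,j}_t$. Three of the four contributions are straightforward.
\begin{itemize}
\item \emph{Initial term.} Since $f_m\le \ind_{\{\|x\|+\|v\|\ge m-1\}}\le (|x|^4+|v|^4)\,C/(m-1)^4$, Assumption~\ref{A:AssumptionMomentFiniteness} gives $\sup_\zeta\mathbb{E}\langle f_m,\mu_0^{\zeta,j}\rangle\le C m^{-4}$.
\item \emph{Martingale.} By Doob's inequality and \eqref{Eq:quadraticvariation}, $\mathbb{E}\sup_t|M^{f_m,j}_t|\le C\gamma^{-1/2}$, uniformly in $m$. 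This vanishes as $\zeta\to0$.
\item \emph{Transport term.} Bound $|\mathcal L_j f_m|\le C(1+|v|)\ind_{\{\|x\|+\|v\|\ge m-2\}}$. Then $\mathbb{E}\int_0^T\langle|\mathcal L_jf_m|,\mu^{\zeta,j}_s\rangle ds\le C m^{-2}\int_0^T\mathbb{E}\langle 1+|x|^4+|v|^4,\mu_s^{\zeta,j}\rangle ds$. The integral is bounded uniformly in $\zeta$ by the moment bounds of Lemma~\ref{L:MomentBounds}, which are uniform in $\zeta$.
\end{itemize}

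\emph{Reaction term.} This is the main obstacle. The loss part $-\sum_r f_m(x_r^{(j)},v_r^{(j)})$ is nonpositive, so it can be dropped when bounding $\sup_t\langle f_m,\mu_t^{\zeta,j}\rangle$ from above. The gain part requires showing
\[
\int K_\ell\Bigl(\int \textstyle\sum_r f_m(y_r^{(j)},v_r'^{(j)})\,m^\eta_\ell\,d\bm y\,d\bm v'\Bigr)\lambda^{(\ell)}[\mu_s^{\zeta}]\to 0
\]
uniformly in $\zeta$ as $m\to\infty$. This is where the content of Lemma~\ref{L:boundoffm} enters, and I would invoke it directly. The argument splits by reaction type.
\begin{itemize}
\item \emph{$S_i\to S_j$ and $S_i+S_k\to S_j$.} The products lie within distance $\eta\le1$ of the substrates, or of convex combinations of their positions. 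The velocities are momentum-weighted averages, hence bounded by $\max|v_i|$ by Assumption~\ref{A:conservationofMomentum}. So a product in the far region forces some substrate into $\{\|x\|+\|v\|\ge (m-2)/2\}$. That set carries $O(m^{-4})$ expected mass by the moment bound, and Assumption~\ref{A:molarBdd} bounds the other factor of $\lambda^{(\ell)}$.
\item \emph{$S_i\to S_j+S_k$ and $S_i+S_k\to S_j+S_r$.} Here the product separation and velocity difference are drawn from $\rho$. Split into two regions according to whether $|u|\le R$ (and $|w|\le R$) or not.
\begin{itemize}
\item In the region $|u|\le R$, the product velocities are within $C(R)$ of the momentum-conserving average, so the previous argument applies with $m$ replaced by $m-C(R)$.
\item In the complementary region, the contribution is at most $C\,C_\circ^2\int_{|u|>R}\rho\le C\epsilon$ by Assumption~\ref{A:AssumptionRho}. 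Alternatively, Chebyshev with the fourth moment of $\rho$ gives a bound of order $R^{-4}$.
\end{itemize}
\end{itemize}
Choosing $R=R(\epsilon)$ first and then $m\to\infty$ makes this term smaller than $C\epsilon$ uniformly in $\zeta$.

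\emph{Conclusion.} Combining the pieces gives
\[
\limsup_{\zeta\to0}\mathbb{E}\sup_t\langle f_m,\mu^{\zeta,j}_t\rangle\le Cm^{-2}+C\epsilon+o_m(1).
\]
Letting $m\to\infty$ and then $\epsilon\to0$ proves the claim.
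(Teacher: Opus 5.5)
Your argument is correct, and apart from the construction of $f_m$ (see the second paragraph) it follows a genuinely different and simpler route than the paper.

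\textbf{Comparison with the paper.} The paper closes the estimate by a recursion in $m$. With $Y^{m}_T=\sup_j\mathbb{E}[\sup_t\langle f_m,\mu^{\zeta,j}_t\rangle]$, it bounds the unweighted part of the transport term by $\langle f_{m-1},\mu^{\zeta,j}_s\rangle$. It bounds the gain terms, via Lemma~\ref{L:boundoffm}, by $\langle f_{m-1-(D+2)R},\cdot\rangle$ and $\langle f_{\lfloor (m-1)/(1+\sigma^*)\rfloor},\cdot\rangle$. This gives $Y^m_T\le Y^m_0+C\gamma^{-1/2}+C_1\int_0^T Y^{\phi(m)}_t\,dt+C_2T(\epsilon+C\eta+g_T^{1/2})$. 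The paper then iterates this $n(m)\approx\log m$ times, so that $(C_1T)^{n}/n!\to0$ while $\phi^{n}(m)\to\infty$. Uniform-in-time moment bounds enter there only through $g_T(m,\zeta)$ (Lemma~\ref{L:interchangelimit}) and through the integral $III$ in Case 4.

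You instead use Lemma~\ref{L:MomentBounds} throughout. Once the loss term is dropped and the martingale is handled by Doob, every remaining term is a time integral of a nonnegative quantity. So $\mathbb{E}\sup_t$ reduces to $\int_0^T\mathbb{E}$, and Chebyshev with the uniform fourth-moment bound controls each tail mass at level $k\approx m/2$ or $k\approx m-C(R)$ by $O(k^{-4})$. This removes the recursion, the choice of $n(m)$, and the need for the sharp constant $\sigma^*$ (a factor $2$ suffices for you), and it gives an explicit rate. The iteration is the natural device in the overdamped setting, where $|\mathcal{L}f_m|\le Cf_{m-1}$ and no moments are needed. Here it saves no hypothesis, because the unbounded coefficient in $v\cdot\nabla_x$ already forces the fourth-moment bound.

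\textbf{The construction needs repair.} With $\phi=\sqrt{1+|x|^2}+\sqrt{1+|v|^2}-2$ you only have $\|x\|+\|v\|-2\le\phi\le\|x\|+\|v\|$. So $\chi(\phi-(m-1))$ is guaranteed to equal $1$ only once $\|x\|+\|v\|\ge m+2$, and no constant shift fixes this. Indeed $\sup\{\phi:\|x\|+\|v\|\le m-1\}\ge m-2$, whereas $\inf\{\phi:\|x\|+\|v\|>m\}\le m-2+2/m$. Any cutoff in $\phi$ would therefore need a transition window of width at most $2/m$, which destroys the uniform $C^2_b$ bound.

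Use instead $\phi_\delta=\sqrt{\delta^2+|x|^2}+\sqrt{\delta^2+|v|^2}$ with a fixed $\delta\in(0,1/2)$. It satisfies $\|x\|+\|v\|\le\phi_\delta\le\|x\|+\|v\|+2\delta$ and has Hessian bounded by $C/\delta$. Set $f_m=\chi\bigl((\phi_\delta-(m-1)-2\delta)/(1-2\delta)\bigr)$ for $m\ge1$. This meets the stated conditions exactly, with $m$-independent $C^2_b$ norms. Your instinct to smooth was right, since $\psi(\|x\|+\|v\|-(m-1))$ as written in the paper is not $C^2$ across $\{x=0\}\cup\{v=0\}$.

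Downstream you only use two facts: $f_m\le\mathbbm{1}_{\{\|x\|+\|v\|>m-1\}}$, and the derivatives of $f_m$ vanish on $\{\|x\|+\|v\|\le m-1\}$. Your stated derivative shell $\{m-2\le\|x\|+\|v\|\le m+1\}$ is slightly off, but only its lower edge matters, so nothing else in the argument changes.
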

\begin{proof} Even though the structure of the proof of Lemma \ref{L:massOutofCompact} is similar to Lemma 7.5 in \cite{IsaacsonSIMA2022}, there are some delicate differences due to the inclusion of the velocity components. We present the proof below emphasizing the differences.

As in \cite{Jourdain2012}, consider $\psi(x,v) = 6(x+v)^5-15(x+v)^4+10(x+v)^3 \in C^2([0,1]\times [0,1])$, but now we set $f_m(x,v) = \psi\bigl(0\vee(\norm{x}+\norm{v}-(m-1))\wedge1\bigr),$ for $x, v\in \mathbb{R}^d, m \geq1.$ Recall that by Assumption \ref{A:AssumptionRho}, for given $\epsilon>0$ there exists a radius $R \in \mathbb{N}$ large enough such that $\int_{r>R} \rho(r)r^{d-1}  d r \le \epsilon$ and $\int_{|w|+|u|>R} \rho(|w|,|u|)dwdu < \epsilon$. Furthermore, by Assumption \ref{A:molarBdd}, we assume that the total molar concentration $\sum_{j=1}^{J}\langle 1, \mu_{t}^{\zeta, j}\rangle$ is uniformly in time bounded by some constant $C_{\circ}$. Therefore, for $m$ sufficiently large, we obtain
\begin{align*}
&\langle f_{m}, \mu_{t}^{\bm{\zeta}, j}\rangle=\langle f_{m}, \mu_{0}^{\bm{\zeta}, j}\rangle+M_{t}^{f_{m}, j}\nonumber\\
&\phantom{=}+\int_0^t \left\langle v \nabla_{x}f_m(x,v) - b_{j} v \nabla_{v}f_m(x,v) + b_{j}^2 D_j \Delta_{v} f_m(x,v), \msmzj(dx,dv)\right\rangle ds\\
&\phantom{=} +\sum_{\ell=1}^{L}\int_0^t \int_{\txtv}\frac{1}{\bm{\alpha}^{(\ell)}!}K_\ell(\bm{x},\bm{v})\biggl(-\sum_{r=1}^{\alj}f_m\left(x_r^{(j)},v_r^{(j)}\right)+\int_{\yv}\left(\sum_{r=1}^{\blj}f_m\left(y_r^{(j)},{v'}_{r}^{(j)}\right)\right)\nonumber\\
&\phantom{=}\times m_{\ell}^{\eta}\left(\bm{y},\bm{v'}|\bm{x},\bm{v}\right)d\bm{y}d\bm{v'}\biggr)\lmeasure ds\\
&\leq\langle f_{m}, \mu_{0}^{\bm{\zeta}, j}\rangle+M_{t}^{f_{m}, j}+C(b_{j})\norm{f_m}_{C_b^2(\Rd \times \Rd)}\int_0^t \left\langle (1 + |v|) \mathbbm{1}_{\{m-1 \leq \norm{x} + \norm{v} \leq m\}} , \msmzj(dx,dv)\right\rangle ds\\
&\phantom{=} +\sum_{\ell=1}^{L}\int_0^t \int_{\txtv}\frac{1}{\bm{\alpha}^{(\ell)}!}K_\ell(\bm{x},\bm{v})\biggl(\int_{\yv}\left(\sum_{r=1}^{\blj}f_m\left(y_r^{(j)},{v'}_{r}^{(j)}\right)\right)\nonumber\\
&\phantom{=}\times m_{\ell}^{\eta}\left(\bm{y},\bm{v'}|\bm{x},\bm{v}\right)d\bm{y}d\bm{v'}\biggr)\lmeasure ds \numberthis\label{Eq:fmmu}
\end{align*}

Consider $g_t(m,\zeta) \coloneqq \mathbb{E}\left[\int_0^t \left\langle |v|^2 \mathbbm{1}_{\{m-1 \leq \norm{x} + \norm{v} \leq m\}} , \msmzj(dx,dv)\right\rangle ds\right]$. Then we obtain the following bound
\begin{align*}
    &\mathbb{E}\left[\supint\int_0^t \left\langle |v| \mathbbm{1}_{\{m-1 \leq \norm{x} + \norm{v} \leq m\}} , \msmzj(dx,dv)\right\rangle ds\right]\\
    \leq&\left(\mathbb{E}\left[\supint\int_0^t \left\langle |v|^2 \mathbbm{1}_{\{m-1 \leq \norm{x} + \norm{v} \leq m\}} , \msmzj(dx,dv)\right\rangle ds\right]\right)^{\frac{1}{2}}\\
    &\phantom{=}\times\left(\mathbb{E}\left[\supint\int_0^t \left\langle \mathbbm{1}_{\{m-1 \leq \norm{x} + \norm{v} \leq m\}} , \msmzj(dx,dv)\right\rangle ds\right]\right)^{\frac{1}{2}}\\
    \leq&\left(\mathbb{E}\left[\int_0^T \left\langle |v|^2 \mathbbm{1}_{\{m-1 \leq \norm{x} + \norm{v} \leq m\}} , \msmzj(dx,dv)\right\rangle ds\right]\right)^{\frac{1}{2}}\nonumber\\
    &\phantom{=}\times\left(\mathbb{E}\left[\supint\int_0^t \left\langle f_{m-1}(x,v), \msmzj(dx,dv)\right\rangle ds\right]\right)^{\frac{1}{2}}\\
        \leq&g_T^{\frac{1}{2}}(m,\zeta)\left(L + \mathbb{E}\left[\supint\int_0^T \left\langle f_{m-1}(x,v), \msmzj(dx,dv)\right\rangle ds\right]\right).
\end{align*}

Next, by Lemma \ref{L:boundoffm} we bound the expectation of the supremum over time of Eq \eqref{Eq:fmmu}
\begin{align*}
    \mathbb{E}&\left[\displaystyle \sup_{t\in [0,T]}\langle f_{m}, \mu_{t}^{\bm{\zeta}, j}\rangle\right]\leq  \mathbb{E}\left[\langle f_{m}, \mu_{0}^{\bm{\zeta}, j}\rangle \right] + \mathbb{E}\left[\sup_{t\in [0,T]}M_{t}^{f_{m}, j} \right]\\
&\phantom{=}+C(b_{j})\norm{f_m}_{C_b^2(\Rd \times \Rd)}\left(L g_T^{\frac{1}{2}}(m,\zeta) + \paren{1 + g_T^{\frac{1}{2}}(m,\zeta)}\mathbb{E}\left[\supint\int_0^T \left\langle f_{m-1}(x,v), \msmzj(dx,dv)\right\rangle ds\right]\right)\\
&\phantom{=} +\mathbb{E}\biggl[\supint\sum_{\ell=1}^{L}\int_0^t \int_{\txtv}\frac{1}{\bm{\alpha}^{(\ell)}!}K_\ell(\bm{x},\bm{v})\biggl(\int_{\yv}\left(\sum_{r=1}^{\blj}f_m\left(y_r^{(j)},{v'}_{r}^{(j)}\right)\right)\nonumber\\
&\phantom{=}\times m_{\ell}^{\eta}\left(\bm{y},\bm{v'}|\bm{x},\bm{v}\right)d\bm{y}d\bm{v'}\biggr)\lmeasure dt\biggr]\\
&\leq  \mathbb{E}\left[\langle f_{m}, \mu_{0}^{\bm{\zeta}, j}\rangle \right] + \mathbb{E}\left[\sup_{t\in [0,T]}M_{t}^{f_{m}, j} \right] + C_2 T \left(\epsilon + C\eta + g_T^{\frac{1}{2}}(m,\zeta)\right) \\
&\phantom{=}+ C_1 \int_0^T \displaystyle \sup_{1\leq i\leq J}\mathbb{E}\left[\sup_{s\in[0,t]} \left\langle f_{\psi(m)}(x,v), \msmzj(dx,dv)\right\rangle +\sup_{s\in[0,t]} \left\langle f_{\phi(m)}(x,v), \msmzj(dx,dv)\right\rangle \right] ds, \numberthis \label{E:EsupBoundfm}
\end{align*}
where for, $R$ from Assumption \ref{A:AssumptionRho}, and fixed constants $D> \left\{\frac{2m_{1}}{m_{1}+m_{2}}, \frac{2m_{2}}{m_{1}+m_{2}}\right\}$ and $\sigma^{*}\in [1,2)$ both defined in Lemma \ref{L:boundoffm}, we have set $\phi(m)=\floor{\frac{m-1}{1+\sigma^{*}}}$, $\psi(m)=m-1-(D+2)R$. The other constants are defined as follows $C=2 L C(K)(C_{\circ} \vee 1)$, $C_{1}=\sup_{m,\zeta} 2\left(C \vee\left\|f_{m}\right\|_{C_{b}^{2}\left(\mathbb{R}^{d}\times \mathbb{R}^{d}\right)}(1 + g_T^{\frac{1}{2}}(m,\zeta))\right)$, and  $C_{2}= 2 L C(K) C_{\circ}^2\sup_{m} \left\|f_{m}\right\|_{C_{b}^{2}\left(\mathbb{R}^{d}\times \mathbb{R}^{d}\right)}$.

Since the constants $\sigma^{*},D,R<\infty$ are fixed, for $m$ large enough we shall have that $\phi(m)\leq \psi(m)+1$. This then implies that for $m$ large enough, we shall have that $f_{\psi(m)}(x,v)\leq f_{\phi(m)}(x,v)$. Therefore,  defining the quantity $Y_{T}^{m, \bm{\zeta}}=\sup_{1\leq j\leq J}\mathbb{E}\left[\sup_{t\in[0,T]}\left(\left\langle f_{m}, \mu_{t}^{\zeta, j}\right\rangle\right)\right]$, we get  the inequality
\begin{align}
&Y_{T}^{m, \bm{\zeta}}  \leq Y_{0}^{m, \bm{\zeta}}+\displaystyle\sup _{1 \leq j \leq J} \mathbb{E}\left[\sup _{t \in[0, T]}\left|M_{t}^{f_{m}, j}\right|\right]+C_{1} \int_{0}^{T} Y_{t}^{\phi(m), \bm{\zeta}} d t+C_2 T \left(\epsilon + C\eta + g_T^{\frac{1}{2}}(m,\zeta)\right) \nonumber \\
&\leq Y_{0}^{m, \bm{\zeta}}+C_{3} \frac{1}{\sqrt{\gamma}}+C_{1} \int_{0}^{T} Y_{t}^{\phi(m), \bm{\zeta}} d t+C_2 T \left(\epsilon + C\eta + g_T^{\frac{1}{2}}(m,\zeta)\right)
\end{align}

We note that by Lemma \ref{L:interchangelimit}, $g_T^{\frac{1}{2}}(m,\zeta)$ can be made arbitrarily small for $m$ large enough. We set for notational simplicity $C_{4}(\epsilon)=C_{3} \frac{1}{\sqrt{\gamma}}+C_2 T \left(\epsilon + C\eta + g_T^{\frac{1}{2}}(m,\zeta)\right)$, suppressing the dependence of $C_{4}$ on $m,\zeta$ and we note that $\lim_{\epsilon,\zeta\rightarrow 0, m\rightarrow\infty}C_{4}(\epsilon)=0$.  Iterating the latter expression in $m$ one time gives
\begin{align}
&Y_{T}^{m, \bm{\zeta}}  \leq Y_{0}^{m, \bm{\zeta}}+C_{4}(\epsilon)+ C_{1}\int_{0}^{T} \left(Y_{0}^{\phi(m), \bm{\zeta}} +C_{4}(\epsilon)\right)d t+ C_{1}^{2}\int_{0}^{T}\int_{0}^{t} Y_{s}^{\phi(\phi(m)), \bm{\zeta}}dsdt\nonumber\\
&\leq Y_{0}^{m, \bm{\zeta}}+C_{4}(\epsilon)+ C_{1}T (Y_{0}^{m, \bm{\zeta}}+C_{4}(\epsilon))+\frac{(C_{1}T)^{2}}{2!}\sup_{s\leq T}Y_{s}^{\phi(\phi(m)), \bm{\zeta}}
\end{align}

Denote now by $\phi^{k}(m)$ the $k-$th convolution of $\phi(\phi(\cdots\phi(m))\cdots))$. Iterating the latter $n$ times in total (with $n$ to be determined) will yield
\begin{align}
&Y_{T}^{m, \bm{\zeta}} \leq \sum_{k=0}^{n-1}\frac{(C_{1}T)^{k}}{k!}(Y_{0}^{\phi^{k}(m), \bm{\zeta}}+C_{4}(\epsilon))+\frac{(C_{1}T)^{n}}{n!}\sup_{s\leq T}Y_{s}^{\phi^{k}(m), \bm{\zeta}}
\end{align}

Next we need to determine the right value for $n$. We want to achieve two things at the same time
\begin{itemize}
\item{$Y_{0}^{\phi^{n}(m), \bm{\zeta}}\rightarrow 0$ as $\phi^{n}(m)\rightarrow\infty$, and}
\item{$\frac{(C_{1}T)^{n}}{n!}\rightarrow 0$ as $n\rightarrow\infty$.}
\end{itemize}

Note that $\phi^{n}(m)\approx \frac{m}{(1+\sigma^{*})^{n}}$. So, if we choose $n=n(m)=\floor{\frac{\log m}{2\log(1+\sigma^{*})}}$ we obtain that $\phi^{n(m)}(m)\approx m^{1/2}$ and that $\frac{(C_{1}T)^{n(m)}}{n(m)!}\approx e^{- cn(m)\log n(m)}\rightarrow 0$. Thus,  with this choice for $n(m)$, both objectives are materialized.

Hence setting  $n(m)=\floor{\frac{\log m}{2\log(1+\sigma^{*})}}$ and using the fact that $\tilde{Y}_{T}^{0}=\displaystyle\limsup _{\zeta \rightarrow 0} \displaystyle\sup _{1 \leq j \leq J} \mathbb{E}\left[\sup _{t \in[0, T]}\left\langle 1, \mu_{t}^{\zeta, j}\right\rangle\right]$ is bounded by $C_\circ$, we have
\begin{align}
&Y_{T}^{m, \bm{\zeta}} \leq \sum_{k=0}^{n(m)-1}\frac{(C_{1}T)^{k}}{k!} Y_{0}^{\phi^{k}(m), \bm{\zeta}}+C_{4}(\epsilon) e^{C_{1}T}+o(1),
\end{align}
and we obtain
\begin{align}
&\lim_{m\rightarrow\infty}\limsup_{\zeta\rightarrow 0} Y_{T}^{m, \bm{\zeta}} \leq C_2 T   e^{C_{1}T} \epsilon.
\end{align}

Note, $C_{1}$ and $C_{2}$ do not depend on $\zeta, R$ or $\epsilon$. This implies
$$
\lim _{m \rightarrow \infty} \limsup _{\zeta \rightarrow 0} \mathbb{E}\left[\sup _{t \in[0, T]}\left\langle f_{m}, \mu_{t}^{\zeta, j}\right\rangle\right]
$$
is less than an arbitrary small number $C_{2} T e^{C_{1} T} \epsilon$, i.e. the limit is zero, for all $1 \leq j \leq J$, concluding the proof of the lemma.

\end{proof}

Denote by $\vxi_t = \left(\xi_{t}^{1}, \xi_{t}^{2}, \ldots, \xi_{t}^{J}\right)$ the weak limit of a subsequence of $(\mu_t^{\zeta,1},\cdots, \mu_t^{\zeta,J})$ in $\mathbb{D}_{M_{F}'(\Rd \times \Rd)}[0, T]$   (which exists by Lemma \ref{L:vagueTightness}). We then have the following continuity result.
\begin{lemma}\label{L:ContinuityLimitingMeasure}
The process $\left\{\left(\xi_{t}^{1}, \xi_{t}^{2}, \ldots, \xi_{t}^{J}\right)\right\}_{t\in[0,T]}$ is a continuous process from $[0,T]$ to both $M_{F}'(\Rd \times \Rd)$ and $M_{F}(\Rd \times \Rd)$.
\end{lemma}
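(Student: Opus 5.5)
The proof rests on the fact that the jumps of the prelimit processes vanish uniformly as $\zeta\to 0$. It has three steps: (i) continuity of $t\mapsto\langle f,\xi^j_t\rangle$ for each fixed $f$ from a countable dense family; (ii) upgrading this to continuity in the vague topology; (iii) upgrading to the weak topology using the tail control of Lemma \ref{L:massOutofCompact}.

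\emph{Step (i): jump bound for fixed $f$.} Fix $f\in C^2_0(\Rd\times\Rd)$. In the representation \eqref{Eq:PathDescription}, the only discontinuities of $t\mapsto\langle f,\mu^{\zeta,j}_t\rangle$ come from the Poisson integrals. Each reaction event changes $\langle f,\mu^{\zeta,j}\rangle$ by at most $\frac{1}{\gamma}(\alpha_{\ell j}+\beta_{\ell j})\norm{f}_\infty\le \frac{4}{\gamma}\norm{f}_\infty$, since at most one reaction fires at a time almost surely. Hence
\[
\sup_{t\in[0,T]}\bigl|\langle f,\mu^{\zeta,j}_t\rangle-\langle f,\mu^{\zeta,j}_{t-}\rangle\bigr|\le \frac{4\norm{f}_\infty}{\gamma}\to0 \quad\text{deterministically.}
\]
The map $x\mapsto \sup_{t\le T}|x(t)-x(t-)|$ is continuous on $\mathbb{D}_{\mathbb{R}}[0,T]$ with the Skorokhod topology. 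Moreover, $\langle f,\mu^{\zeta,j}\rangle\Rightarrow\langle f,\xi^j\rangle$ along the subsequence, by the continuous mapping theorem applied to the vague limit. The standard result (Ethier–Kurtz, Theorem 3.10.2) then gives that $\langle f,\xi^j\rangle$ is almost surely continuous.

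\emph{Step (ii): vague continuity.} Take a countable family $\{f_k\}\subset C^2_0(\Rd\times\Rd)$ that is dense in $C_0(\Rd\times\Rd)$. Step (i) gives a single null set off which every $t\mapsto\langle f_k,\xi^j_t\rangle$ is continuous. The total masses $\langle 1,\xi^j_t\rangle$ are bounded by $C_\circ$, which is inherited from Assumption \ref{A:molarBdd} by lower semicontinuity. A $3\epsilon$ argument then extends continuity to every $f\in C_0$, which is exactly continuity into $M_F'(\Rd\times\Rd)$.

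\emph{Step (iii): weak continuity, the main obstacle.} Vague continuity together with continuity of the total mass implies weak continuity, so the issue is that mass may escape to infinity in $(x,v)$. I would use Lemma \ref{L:massOutofCompact}. Since $f_m\ge 0$ is continuous, we can approximate $f_m$ from below by compactly supported $f_m\chi_R$. By Fatou and vague lower semicontinuity,
\[
\mathbb{E}\Bigl[\sup_{t\in[0,T]}\langle f_m,\xi^j_t\rangle\Bigr]\le\liminf_{\zeta\to0}\mathbb{E}\Bigl[\sup_{t\in[0,T]}\langle f_m,\mu^{\zeta,j}_t\rangle\Bigr].
\]
Here $\sup_t$ of a continuous (by step (ii)) nonnegative functional is lower semicontinuous in the Skorokhod topology, once we restrict to continuous limits. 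The right-hand side tends to $0$ as $m\to\infty$. Since $\sup_t\langle f_m,\xi^j_t\rangle$ is monotone in $m$, passing along a subsequence gives $\sup_t\langle f_m,\xi^j_t\rangle\to0$ almost surely, i.e.\ $\{\xi^j_t\}_{t\le T}$ is uniformly tight almost surely.

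Now take $g\in C_b(\Rd\times\Rd)$ and split $g=g(1-f_m)+gf_m$. The function $g(1-f_m)$ is compactly supported and continuous, so $t\mapsto\langle g(1-f_m),\xi^j_t\rangle$ is continuous by step (ii). The remainder satisfies $|\langle gf_m,\xi^j_t\rangle|\le\norm{g}_\infty\sup_s\langle f_m,\xi^j_s\rangle$ uniformly in $t$, and this tends to $0$. A uniform limit of continuous functions is continuous, so $t\mapsto\xi^j_t$ is continuous into $M_F(\Rd\times\Rd)$.

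The delicate point is justifying the lower semicontinuity and measurability used in passing $\mathbb{E}\sup_t$ to the limit. If that is awkward, I would instead bound $\langle f_m,\xi^j_t\rangle$ at each rational $t$ and then use continuity from step (ii) to pass to all $t$.
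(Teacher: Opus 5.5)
Your proposal is correct and follows essentially the same route as the paper, which defers to Lemma 7.6 of \cite{IsaacsonSIMA2022} using Lemmas \ref{L:vagueTightness} and \ref{L:massOutofCompact}: the deterministic $O(1/\gamma)$ jump bound gives vague continuity of the limit, and Lemma \ref{L:massOutofCompact} with Fatou gives $\sup_{t}\langle f_m,\xi^j_t\rangle\to0$ almost surely, which upgrades vague to weak continuity via the splitting $g=g(1-f_m)+gf_m$. The step you flag as delicate is harmless: for $g=f_m\chi_R\in C_c$ the functional $\mu_\cdot\mapsto\sup_{t\le T}\langle g,\mu_t\rangle$ is bounded and continuous on $\mathbb{D}_{M_F'}[0,T]$, so the inequality follows from weak convergence and then monotone convergence in $R$.
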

\begin{proof}
Given Lemma \ref{L:vagueTightness} and Lemma \ref{L:massOutofCompact}, the proof of this result follows directly from that of Lemma 7.6 in \cite{IsaacsonSIMA2022}. Therefore the details are omitted.
\end{proof}

Now we are ready to present the main tightness result.
\begin{theorem}\label{T:MainTightnessTheorem}
For any $j\in\{1,\cdots,J\}$, the family of measure-valued stochastic processes $\left\{\mu_t^{\zeta,j}\right\}_{t\in[0,T]}$ is tight in $\mathbb{D}_{M_{F}(\Rd \times \Rd)}[0, T]$.
\end{theorem}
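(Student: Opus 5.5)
The plan is to follow the standard route of \cite{MS:1993} (and Lemma 7.7 / Theorem 7.1 of \cite{IsaacsonSIMA2022}): tightness in the vague topology plus uniform control of mass escaping to infinity plus continuity of the limit upgrades to tightness in the weak topology. Fix $j$.

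First, by Lemma \ref{L:vagueTightness} and the density of $C_0^2(\Rd\times\Rd)$ in $C_0(\Rd\times\Rd)$, the family $\{\mu^{\zeta,j}_t\}$ is tight in $\mathbb{D}_{M_F'(\Rd\times\Rd)}[0,T]$ (Roelly's criterion, \cite{Coppoletta1986}). Take any subsequence and extract a further subsequence converging in distribution in $\mathbb{D}_{M_F'}[0,T]$ to some $\xi^j$. By Lemma \ref{L:ContinuityLimitingMeasure}, $\xi^j$ has continuous paths in both $M_F'$ and $M_F$.

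The key step is to use the criterion of M\'el\'eard--Roelly \cite{MS:1993}. A sequence that converges in law in $\mathbb{D}_{M_F'}[0,T]$ to a process that is continuous in $M_F$ also converges in law in $\mathbb{D}_{M_F}[0,T]$, provided the total masses $\langle 1,\mu^{\zeta,j}_t\rangle$ converge in law in $\mathbb{D}_{\mathbb{R}}[0,T]$ to $\langle 1,\xi^j_t\rangle$. To check this, I write $1 = (1-f_m) + f_m$ with $f_m$ from Lemma \ref{L:massOutofCompact}. The function $1-f_m$ is compactly supported and lies in $C^2_b$, so it can be approximated in sup norm by $C_0^2$ functions; vague convergence therefore gives $\langle 1-f_m,\mu^{\zeta,j}\rangle \Rightarrow \langle 1-f_m,\xi^j\rangle$ in $\mathbb{D}_{\mathbb{R}}[0,T]$. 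The remainder is controlled uniformly in time by Lemma \ref{L:massOutofCompact}:
\begin{equation*}
\lim_{m\to\infty}\limsup_{\zeta\to0}\mathbb{E}\Big[\sup_{t\le T}\langle f_m,\mu^{\zeta,j}_t\rangle\Big]=0 .
\end{equation*}
On the limit side, Fatou's lemma together with the fact that $f_m$ is dominated by $1-f_{m'}$-type cutoffs from below gives $\mathbb{E}\sup_t\langle f_m,\xi^j_t\rangle\to0$. A standard $\epsilon/3$ argument in the Skorokhod metric (e.g. Theorem 3.2 of Billingsley) then yields convergence of the total masses. Hence every subsequence has a further subsequence converging in $\mathbb{D}_{M_F}[0,T]$. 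By Prohorov, since $\mathbb{D}_{M_F}[0,T]$ is Polish, this is tightness.

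Alternatively, one can verify Jakubowski's criterion directly. The compact containment condition takes the form
\[
K_\epsilon=\{\mu:\langle1,\mu\rangle\le C_\circ,\ \mu(\{\|x\|+\|v\|>m_k\})\le 1/k\ \forall k\},
\]
with $m_k$ chosen via Lemma \ref{L:massOutofCompact} and Markov's inequality. The separating family $\{\langle f,\cdot\rangle: f\in C_b^2\}$ then gives tight real processes: this follows from Lemmas \ref{L:AldousA_M}--\ref{L:Aldous}, whose bounds only use $\|f\|_{C_b^2}$, while Assumption \ref{A:molarBdd} supplies uniform boundedness.

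The main obstacle is the interchange of the limits $m\to\infty$ and $\zeta\to0$ uniformly over $t\in[0,T]$. This requires the supremum in time to sit inside the expectation, which is exactly what Lemma \ref{L:massOutofCompact} provides. For the Jakubowski route, one must also note that the Aldous estimates remain valid for $f\in C_b^2$ rather than $C_0^2$. This holds because Lemma \ref{L:AldousA_A} uses the uniform-in-$\zeta$ second velocity moment bound (Lemma \ref{L:MomentBounds}), and that bound does not depend on compact support of $f$.
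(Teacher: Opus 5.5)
Your main argument is correct and is essentially the paper's: the paper's (omitted) proof invokes the M\'el\'eard--Roelly criterion as in Theorem 7.7 of \cite{IsaacsonSIMA2022}. It combines vague tightness (Lemma \ref{L:vagueTightness}), uniform-in-time control of escaping mass (Lemma \ref{L:massOutofCompact}) and continuity of the limit in $M_F$ (Lemma \ref{L:ContinuityLimitingMeasure}) to get convergence of the total masses, exactly as you do. Your Jakubowski variant is also valid, since the Aldous bounds only use $\|f\|_{C_b^2}$ and the uniform moment bound. Your limit-side sentence is garbled but the claim is true, and the simplest justification is that continuity of $t\mapsto\xi^j_t$ in $M_F$ makes $\{\xi^j_t\}_{t\in[0,T]}$ compact, hence uniformly tight, so $\sup_{t\le T}\langle f_m,\xi^j_t\rangle\to0$ almost surely with no Fatou argument.
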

\begin{proof}
Given Lemmas \ref{L:vagueTightness}, \ref{L:massOutofCompact} and \ref{L:ContinuityLimitingMeasure}, the proof of this result follows by the arguments in \cite{meleard1993convergences}, as in Theorem 7.7 of \cite{IsaacsonSIMA2022}. Details are omitted.
\end{proof}

\section{Uniqueness}\label{S:uniqueness}
We now show that the solution to~\eqref{Eq:limitingMeasures} is unique in $C_{M_{F}(\Rd \times \Rd)}([0,T])$. $C$ will subsequently denote a generic constant. Suppose, by contradiction, that we have two different solutions to~\eqref{Eq:limitingMeasures}, $\{\vxi_t \coloneqq (\xi_{t}^{1}, \xi_{t}^{2}, \cdots, \xi_{t}^{J})\}_{t \in[0, T]}$ and $\{\vxibar_t \coloneqq (\bar{\xi}_{t}^{1}, \bar{\xi}_{t}^{2}, \cdots, \bar{\xi}_{t}^{J})\}_{t \in[0, T]}$, with the same initial condition $\vxi_{0}=\vxibar_{0}$. Parallel to Eq (\ref{Eq:limitingMeasures}), for a test function of the form of $\psi_{t}(x,v) \in C_{b}^{1,2}(\mathbb{R}_{+} \times \mathbb{R}^{2d})$, we get
\begin{equation}\label{sol}
\begin{aligned}
\langle\psi_{t}, \xi_{t}^{j}\rangle &= \langle\psi_{0}, \xi_{0}^{j}\rangle+\int_{0}^{t}\langle\partial_{s} \psi_{s}+(\mathcal{L}_{j} \psi_{s})(x,v), \xi_{s}^{j}(d x, dv)\rangle d s \\
&\phantom{=} +\sum_{\ell=1}^{L}\int_{0}^{t}\int_{\txtv} \frac{1}{\bm{\alpha}^{(\ell)}!} K_{\ell}(\bm{x},\bm{v})\biggl(-\sum_{r=1}^{\alpha_{\ell j}}\psi_{t}(\xrj,\vrj) +\int_{\yv}\left(\sum_{r=1}^{\blj}\psi_{t}(\yrj,\vprj)\right)\\
&\phantom{=}\times m_{\ell}\left(\bm{y},\bm{v'}|\bm{x},\bm{v}\right)d\bm{y}d\bm{v'}\biggr)\llmeasure ds.
\end{aligned}
\end{equation}

Let $\mathcal{P}_{j, t}, t \geq 0$, be the semigroup generated by $\mathcal{L}_{j}$, defined in (\ref{Eq:Operators}),
for  $j=1,2, \cdots, J$. Choose $\psi_{s}(x,v;t)=\mathcal{P}_{j, t-s} f(x,v)$, respectively for each $1 \leq j \leq J$, where $f \in C_{b}^{2}(\Rd \times \Rd) \text{ and }\|f\|_{L^{\infty}} \leq 1$, with $s\leq t$. Using the semigroup property, we obtain
\begin{equation}\label{group}
\begin{aligned}
\langle &f, \xi_{t}^{j}\rangle =\langle \mathcal{P}_{j,t}f, \xi_{0}^{j}\rangle +\sum_{\ell=1}^{L}\int_{0}^{t}\int_{\txtv} \frac{1}{\bm{\alpha}^{(\ell)}!} K_{\ell}(\bm{x},\bm{v})\biggl(-\sum_{r=1}^{\alpha_{\ell j}}\mathcal{P}_{\ell, t-s}f(\xrj,\vrj)\\
&\phantom{=}+\int_{\yv}\left(\sum_{r=1}^{\blj}\mathcal{P}_{\ell, t-s}f(\yrj,\vprj)\right)m_{\ell}\left(\bm{y},\bm{v'}|\bm{x},\bm{v}\right)d\bm{y}d\bm{v'}\biggr)\llmeasure ds.
\end{aligned}
\end{equation}
Because we work on finite time intervals, we subsequently make use of the bound that
$\displaystyle \sup_{t \in [0,T]} \|\mathcal{P}_{j,t} f\|_{L^\infty} \leq C\norm{f}_{L^\infty}<C<\infty$ as $\norm{f}_{L^\infty} \leq 1$ (see Chapter 4 of \cite{Pazy1983}) for some finite constant $C=C(T)<\infty$ that depends on the time horizon $T$.

From~\eqref{group},  the remainder of the argument essentially follows the proof of uniqueness in~\cite{IsaacsonSIMA2022}. For completeness we reproduce the argument here. With $M=1 \vee \displaystyle\sup_{\{t \in[0, T], j=1,2, \cdots, J\}}|\langle 1, \xi_{t}^{j}\rangle| \vee|\langle 1, \bar{\xi}_{t}^{j}\rangle|<\infty$, we get the following estimates for $\langle f, \xi_{t}^{j}-\bar{\xi}_{t}^{j}\rangle$
\begin{align*}
&|\langle f, \xi_{t}^{j}-\bar{\xi}_{t}^{j}\rangle|  \leq \sum_{\ell=1}^{L}\int_{0}^{t}\biggl|\int_{\txtv} \frac{1}{\bm{\alpha}^{(\ell)}!} K_{\ell}(\bm{x},\bm{v})\biggl(-\sum_{r=1}^{\alpha_{\ell j}}\mathcal{P}_{\ell, t-s}f(\xrj,\vrj)\\
&\phantom{=}+\int_{\yv}\left(\sum_{r=1}^{\blj}\mathcal{P}_{\ell, t-s}f(\yrj,\vprj)\right)m_{\ell}\left(\bm{y},\bm{v'}|\bm{x},\bm{v}\right)d\bm{y}d\bm{v'}\biggr)\nonumber\\
&\phantom{=}\quad\times\left|\llmeasure-\llbmeasure\right|\biggr| ds\\
&\leq C(K) \sum_{\ell=1}^{L} \frac{\alpha_{\ell j}+\beta_{\ell j}}{\bm{\alpha}^{(\ell)}!} \int_{0}^{t} \norm{\llmeasure-\llbmeasure}_{M_{F}(\xv)} d s \\
&\leq C(K) \sum_{\ell=1}^{L} \frac{\alpha_{\ell j}+\beta_{\ell j}}{\bm{\alpha}^{(\ell)}!} \int_{0}^{t}\|\otimes_{i=1}^{J}(\otimes_{r=1}^{\alpha_{\ell i}} \xi_{s}^{i})-\otimes_{i=1}^{J}(\otimes_{r=1}^{\alpha_{\ell i}} \bar{\xi}_{s}^{i})\|_{M_{F}(\xv)} d s \\
& \leq C(K) \sum_{\ell=1}^{L} \frac{\alpha_{\ell j}+\beta_{\ell j}}{\bm{\alpha}^{(\ell)}!} \int_{0}^{t} M^{|\alpha^{(\ell)}|-1} \sum_{i=1}^{J} \alpha_{\ell i}\|\xi_{s}^{i}-\bar{\xi}_{s}^{i}\|_{M_{F}(\Rd \times \Rd)} d s,\numberthis \label{Eq:limitingMeasureDifference}
\end{align*}
where the last inequality is due to the fact that $\left\langle 1, \xi_{s}^{i}\right\rangle$ or $\left\langle 1, \bar{\xi}_{s}^{i}\right\rangle$ are uniformly bounded by $M$ for all $1 \leq i \leq J$. In the second equality of Eq$\eqref{Eq:limitingMeasureDifference}$, we have used the following estimates
\begin{align*}
    &\biggl|\frac{1}{\bm{\alpha}^{(\ell)}!} K_{\ell}(\bm{x},\bm{v})\biggl(-\sum_{r=1}^{\alpha_{\ell j}}\mathcal{P}_{\ell, t-s}f(\xrj,\vrj)\\
&\phantom{=}+\int_{\yv}\left(\sum_{r=1}^{\blj}\mathcal{P}_{\ell, t-s}f(\yrj,\vprj)\right)m_{\ell}\left(\bm{y},\bm{v'}|\bm{x},\bm{v}\right)d\bm{y}d\bm{v'}\biggr)\biggr|\\
\leq&\biggl|\frac{1}{\bm{\alpha}^{(\ell)}!} K_{\ell}(\bm{x},\bm{v})\biggl(-\sum_{r=1}^{\alpha_{\ell j}}\left|\mathcal{P}_{\ell, t-s}f(\xrj,\vrj)\right|\\
&\phantom{=}+\int_{\yv}\sum_{r=1}^{\blj}\left|\mathcal{P}_{\ell, t-s}f(\yrj,\vprj)\right|m_{\ell}\left(\bm{y},\bm{v'}|\bm{x},\bm{v}\right)d\bm{y}d\bm{v'}\biggr)\biggr|\\
\leq&C(K) \sum_{\ell=1}^{L} \frac{\alpha_{\ell j}+\beta_{\ell j}}{\bm{\alpha}^{(\ell)}!}. \numberthis \label{Eq:negativeSemigroup}
\end{align*}
As in~\cite{IsaacsonSIMA2022}, we introduce a norm on $M_{F}(\Rd \times \Rd)$ defined by
\begin{equation*}
    \|\xi\|_{M_{F}(\mathbb{R}^{2d})}=\sup_{f \in C_b^2(\mathbb{R}^{2d}), \|f\|_{L^{\infty}} \leq 1}|\langle f, \xi\rangle|.
\end{equation*}
Summing over all particle types, we get
\begin{align*}
&\sum_{j=1}^{J}\|\xi_{t}^{j}-\bar{\xi}_{t}^{j}\|_{M_{F}(\mathbb{R}^{2d})} \leq C(K) \sum_{j=1}^{J} \sum_{\ell=1}^{L} \frac{\alpha_{\ell j}+\beta_{\ell j}}{\bm{\alpha}^{(\ell)}!} \int_{0}^{t} M^{|\alpha^{(\ell)}|-1} \sum_{i=1}^{J} \alpha_{\ell i}\|\xi_{s}^{i}-\bar{\xi}_{s}^{i}\|_{M_{F}(\mathbb{R}^{2d})} d s\\
\leq& C(K)LJ \max _{1 \leq \ell \leq L, 1 \leq j \leq J}\bigl(\frac{\alpha_{\ell j}+\beta_{\ell j}}{\bm{\alpha}^{(\ell)}!} M^{| \alpha^{(\ell)} |-1} \alpha_{\ell j}\bigr)\int_{0}^{t} \sum_{i=1}^{J} \|\xi_{s}^{i}-\bar{\xi}_{s}^{i}\|_{M_{F}(\mathbb{R}^{2d})} d s&
\end{align*}
for some generic constant $C<\infty$. Applying Gronwall's inequality, we get $\sum_{j=1}^{J}\norm{\xi_{t}^{j}-\bar{\xi}_{t}^{j}}_{M_{F}(\mathbb{R}^{2d})}=0$ for all $t \in[0, T]$,
  concluding the proof of the uniqueness of the limiting solution.

\section{Preliminary moment bounds}\label{S:MomentBounds}
The purpose of this section is to obtain moment bounds on $\mu_{t}^{\zeta,j}$ that are uniform with respect to $\zeta\in \mathbb{R}_{+}\times \mathbb{R}_{+}$, for all $t \in [0,T]$ and for any $T<\infty$, i.e., to bound
\[
\sup_{t\in[0,T]}\theta^{\vec{\zeta},j,4}_{t}=\sup_{t\in[0,T]}\mathbb{E}\la |x|^{4} +|v|^{4},\mu^{\zeta,j}_{t}(dx,dv)\ra
\]
uniformly with respect to $\zeta\in\mathbb{R}_{+} \times \mathbb{R}_{+}$.

The first step is to prove that the regularization by $\eta$ appropriately recovers the placement measures in the limit as $\eta\downarrow 0$. In particular, we have the following lemma.
\begin{lemma}\label{L:placementDensityDifference}
For any $\eta \geq 0$ small enough, $1 \leq \ell \leq L, \bm{y} \in \mathbb{Y}^{(\ell)}, \bm{v'} \in \mathbb{V'}^{(\ell)}, \bm{x} \in \mathbb{X}^{(\ell)}, \bm{v} \in \mathbb{V}^{(\ell)}$, and $f \in C_{b}^{2}\left(\mathbb{Y}^{(\ell)},\mathbb{V'}^{(\ell)}\right)$, there exists a constant $C$ such that
$$
\left|\int_{\yv} f(\bm{y},\bm{v'})\left(m_{\ell}^{\eta}(\bm{y},\bm{v'} \mid \bm{x},\bm{v})-m_{\ell}(\bm{y},\bm{v'} \mid \bm{x},\bm{v})\right) d \bm{y} d \bm{v'}\right| \leq C\eta \|f\|_{C_{b}^{2}\left(\yv\right)}.
$$
\end{lemma}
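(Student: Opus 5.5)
The plan is to treat each of the four placement-density types in Assumption~\ref{A:PlacementDensity} separately. In each case we write the integral against $m_\ell^\eta$ as an integral of $f$ evaluated at a point shifted by mollifier variables, and then Taylor expand. The general mechanism is as follows. After the change of variables $z=(\text{argument of } G_\eta)/\eta$, every factor $G_\eta(\cdot)$ becomes $G(z)\,dz$ with $|z|\le 1$. The corresponding factor of $m_\ell$ is the Dirac delta, which amounts to setting $z=0$. The difference of the two integrals is therefore
\[
\int \bigl(F(w+\eta z)-F(w)\bigr)G(z)\,dz
\]
for some function $F$ built from $f$, possibly also integrated against $\rho$ and summed over the $p_i$. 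A first-order Taylor bound then gives $\le \eta\,\|\nabla F\|_\infty \int |z|G(z)\,dz$. Since $\int z\,G(z)\,dz$ need not vanish, first order is the best we should claim, and this matches the stated $C\eta$. The $C_b^2$ norm on the right-hand side is more than enough.

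\emph{Case 1}, $S_i\to S_j$. The substitution $y=x+\eta z_1$, $v'=v+\eta z_2$ gives
\[
\int\!\!\int \bigl[f(x+\eta z_1,v+\eta z_2)-f(x,v)\bigr]G(z_1)G(z_2)\,dz_1\,dz_2,
\]
which is bounded by $2\eta\|f\|_{C^1_b}$.

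\emph{Case 3}, $S_i+S_k\to S_j$. For each $i$ in the convex combination we substitute $y=\alpha_i x_1+(1-\alpha_i)x_2+\eta z_1$ and $v'=(m_1v_1+m_2v_2)/m_3+\eta z_2$. The same bound then follows, summed with weights $p_i$, and $\sum_i p_i=1$.

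\emph{Case 4}, $S_i+S_k\to S_j+S_r$. Here the spatial mollifiers $G_\eta(x_1-y_1)G_\eta(x_2-y_2)$ shift $y_1,y_2$ directly. For the velocity part we parametrize $(v_1',v_2')$ by the relative velocity $u=v_1'-v_2'$ and the center-of-mass momentum $P=m_3v_1'+m_4v_2'$. The Jacobian of this linear map is $(m_3+m_4)^{-d}$, which is exactly why the prefactor $(m_3+m_4)^d$ appears, and it makes $\rho(|u|)\,du\,G_\eta(P-m_1v_1-m_2v_2)\,dP$ a probability measure. Writing $v_1'=(P+m_4u)/(m_3+m_4)$ and $v_2'=(P-m_3u)/(m_3+m_4)$, and setting $P=m_1v_1+m_2v_2+\eta z$, the perturbation of $v_1',v_2'$ is $O(\eta)$ uniformly in $u$. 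The bound therefore again holds with constant $C\|f\|_{C^1_b}$, and the $\rho$-integral contributes only its total mass $1$.

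\emph{Case 2}, $S_i\to S_j+S_k$. This is the main obstacle, because $m_\ell^\eta$ is mollified in the substrate variables $(x,v)$ rather than the product variables. Here I would change variables to $w=y_1-y_2$ and $c=\alpha_i y_1+(1-\alpha_i)y_2$, which has unit Jacobian. For the velocities I would use $u=v_1'-v_2'$ and $P=(m_1v_1'+m_2v_2')/m_3$, whose Jacobian is constant by conservation of mass $m_1+m_2=m_3$. The factor $G_\eta(x-c)G_\eta(v-P)$ then shifts $c=x-\eta z_1$ and $P=v-\eta z_2$, while $\rho(|w|,|u|)$ is untouched. We then express $(y_1,y_2,v_1',v_2')$ affinely in $(c,w,P,u)$, so an $O(\eta)$ shift of $(c,P)$ moves the products by $O(\eta)$ uniformly in $(w,u)$. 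Taylor expansion then gives the $C\eta\|f\|_{C^1_b}$ bound once more. The delicate point is to check that the normalizations of $m_\ell^\eta$ and $m_\ell$ match after the Jacobians, so that no $O(1)$ mass discrepancy appears. For that I rely on Assumption~\ref{A:PlacementDensity} (each $m^\eta_\ell$ is a probability density) and on the normalization of $\rho$ in Assumption~\ref{A:AssumptionRho}.
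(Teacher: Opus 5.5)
Your proposal is correct and matches the paper's proof: the paper also goes case by case, subtracts the value of $f$ at the mollifier's center (using that the mollified factors have unit mass, with conservation of mass in Case 2), restricts to the $O(\eta)$ support ball, and applies the mean value theorem to get $C\eta\|f\|_{C^1_b}$. The only difference is cosmetic. In Case 2 the paper integrates in $(w,y_2,u,v_2')$ rather than your $(w,c,u,P)$, which is equivalent up to a shift for fixed $(w,u)$, and in Case 4 it uses $v_2'$ together with the relative velocity in place of your $(u,P)$.
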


\begin{proof}\label{P:Identification}
In all cases below, $B(z,\eta)$ will represent the ball centered at the point $z$ belonging to the appropriate Euclidean space with radius $\eta>0$.

\underline{Case 1: Reaction of the form $S_{i} \rightarrow S_{j}$.}
\begin{align*}
&\left|\int_{\yv} f(\bm{y},\bm{v'})\left(m_{\ell}^{\eta}(\bm{y},\bm{v'} \mid \bm{x},\bm{v})-m_{\ell}(\bm{y},\bm{v'} \mid \bm{x},\bm{v})\right) d \bm{y} d \bm{v'}\right|\\
=&\biggl|\biggl[\int_{\mathbb{R}^{2 d}} f\left(y,v'\right) G_{\eta}\left(y-x\right) G_{\eta}\left(v'-v\right)d ydv' - f(x,v)\biggr]\biggr|\\
=&\biggl|\biggl[\int_{\mathbb{R}^{2 d}} \left(f\left(y,v'\right)- f(x,v)\right) G_{\eta}\left(y-x\right) G_{\eta}\left(v'-v\right)d ydv' \biggr]\biggr|\\
\leq&\int_{B\left(\left(x,v\right),\sqrt{2}\eta\right)} \left|f\left(y,v'\right)- f(x,v)\right| G_{\eta}\left(y-x\right) G_{\eta}\left(v'-v\right)d ydv' \\
\leq&\int_{B\left(\left(x,v\right),\sqrt{2}\eta\right)} \sqrt{2}\eta \norm{f}_{C^1_b(\Rd \times \Rd)}G_{\eta}\left(y-x\right) G_{\eta}\left(v'-v\right)d ydv' \\
\leq&\sqrt{2}\eta\norm{f}_{C^1_b(\Rd \times \Rd)}.
\end{align*}

\underline{Case 2: Reaction of the form $S_{i} \rightarrow S_{j}+S_{k}$.}
\begin{align*}
&\left|\int_{\yv} f(\bm{y},\bm{v'})\left(m_{\ell}^{\eta}(\bm{y},\bm{v'} \mid \bm{x},\bm{v})-m_{\ell}(\bm{y},\bm{v'} \mid \bm{x},\bm{v})\right) d \bm{y} d \bm{v'}\right|\\
=&\biggl| \sum_{i=1}^{I} p_{i} \times\biggl[\int_{\mathbb{R}^{4 d}} f\left(y_{1}, y_{2}, v_1', v_2'\right) \rho\left(\left|y_{1}-y_{2}\right|,\left|{v}_{1}-{v}_{2}\right|\right) G_{\eta}\left(x-\left(\alpha_{i} y_{1}+\left(1-\alpha_{i}\right) y_{2}\right)\right)\\
&\phantom{=}\times G_{\eta}\left(v-\frac{m_1 v_1+ m_2 v_2}{m_3}\right)d y_{1} d y_{2} d v_{1} d v_{2}\\
&\phantom{=}- \int_{\mathbb{R}^{4 d}} f\left(y_{1}, y_{2}, v_1', v_2'\right) \rho\left(\left|y_{1}-y_{2}\right|,\left|{v}_{1}-{v}_{2}\right|\right) \delta\left(x-\left(\alpha_{i} y_{1}+\left(1-\alpha_{i}\right) y_{2}\right)\right)\\
&\phantom{=}\times \delta\left(v-\frac{m_1 v_1+ m_2 v_2}{m_3}\right)d y_{1} d y_{2} d v_{1} d v_{2} \biggr]\biggr|\\
=&\biggl| \sum_{i=1}^{I} p_{i} \times\biggl[\int_{\mathbb{R}^{4 d}} f\left(w+y_{2}, y_{2}, u+{v}_2, {v}_2\right) \rho\left(\left|w\right|,\left|u\right|\right) G_{\eta}\left(x-\alpha_i w-y_2\right)\\
&\phantom{=}\times G_{\eta}\left(v-\frac{m_1}{m_3}u-\frac{m_1+m_2}{m_3}v_2\right)d y_{2} d v_{2} d w d u\\
&\phantom{=}- \int_{\mathbb{R}^{4 d}} f\left(w+y_{2}, y_{2}, u+{v}_2, {v}_2\right) \rho\left(\left|w\right|,\left|u\right|\right) \delta\left(x-\alpha_i w-y_2\right)\nonumber\\
&\phantom{=}\qquad\times\delta\left(v-\frac{m_1}{m_3}u-\frac{m_1+m_2}{m_3}v_2\right)d y_{2} d v_{2} d w d u \biggr]\biggr|\\
=&\biggl| \sum_{i=1}^{I} p_{i} \times\biggl[\int_{\mathbb{R}^{2 d}} \rho\left(\left|w\right|,\left|u\right|\right) \biggl(\int_{\mathbb{R}^{2 d}}\biggl(f\left(w+y_{2}, y_{2}, u+{v}_2, {v}_2\right)\\
&\phantom{=}-\frac{m_3}{m_1+m_2}f\left(w+x-\alpha_i w, x-\alpha_i w, u + \frac{m_3}{m_1+m_2}v - \frac{m_1}{m_1+m_2}u,\frac{m_3}{m_1+m_2}v - \frac{m_1}{m_1+m_2}u\right)\biggr)\\
&\phantom{=}\times G_{\eta}\left(x-\alpha_i w-y_2\right)G_{\eta}\left(v-\frac{m_1}{m_3}u-v_2\right) d y_{2} d v_{2}\biggr)d w d u\biggr]\biggr|\\
\leq&\biggl| \sum_{i=1}^{I} p_{i} \times\biggl[\int_{\mathbb{R}^{2 d}} \rho\left(\left|w\right|,\left|u\right|\right) \biggl(\int_{B\left(\left(x-\alpha_i w,v-\frac{m_1}{m_3}u\right),\sqrt{2}\eta\right)}\biggl|f\left(w+y_{2}, y_{2}, u+{v}_2, {v}_2\right) \\
&\phantom{=}-f\left(w+x-\alpha_i w, x-\alpha_i w, u + v - \frac{m_1}{m_3}u,v - \frac{m_1}{m_3}u\right)\biggr|\\
&\phantom{=}\times G_{\eta}\left(x-\alpha_i w-y_2\right)G_{\eta}\left(v-\frac{m_1}{m_3}u-v_2\right) d y_{2} d v_{2}\biggr)d w d u\biggr]\biggr|\text{ (by Assumption \ref{A:conservationofMomentum})}\\
\leq&\biggl| \sum_{i=1}^{I} p_{i} \times\biggl[\int_{\mathbb{R}^{2 d}} \rho\left(\left|w\right|,\left|u\right|\right) \biggl(\int_{B\left(\left(x-\alpha_i w,v-\frac{m_1}{m_3}u\right),\sqrt{2}\eta\right)} \sqrt{2}\eta\norm{f}_{C^1_b(R^{4d})} G_{\eta}\left(x-\alpha_i w-y_2\right)\\
&\phantom{=}\times G_{\eta}\left(v-\frac{m_1}{m_3}u-v_2\right) d y_{2} d v_{2}\biggr)d w d u\biggr]\biggr|\\
\leq&\sqrt{2}\eta\norm{f}_{C^1_b(R^{4d})}.
\end{align*}

\underline{Case 3: Reaction of the form $S_{i}+S_{k} \rightarrow S_{j}$.}
\begin{align*}
&\left|\int_{\yv} f(\bm{y},\bm{v'})\left(m_{\ell}^{\eta}(\bm{y},\bm{v'} \mid \bm{x},\bm{v})-m_{\ell}(\bm{y},\bm{v'} \mid \bm{x},\bm{v})\right) d \bm{y} d \bm{v'}\right|\\
=&\biggl| \sum_{i=1}^{I} p_{i} \times\biggl[\int_{\mathbb{R}^{2 d}} f\left(y,v'\right) G_{\eta}\left(y-\left(\alpha_{i} x_{1}+\left(1-\alpha_{i}\right) x_{2}\right)\right)G_{\eta}\left(v'-\frac{m_1 v_1+ m_2 v_2}{m_3}\right)d ydv'\\
&\phantom{=}- f\left(\alpha_{i} x_{1}+\left(1-\alpha_{i}\right) x_{2},\frac{m_1 v_1+ m_2 v_2}{m_3}\right)\biggr]\biggr|\\
=&\biggl| \sum_{i=1}^{I} p_{i} \times\biggl[\int_{\mathbb{R}^{2 d}} \left(f\left(y,v'\right) - f\left(\alpha_{i} x_{1}+\left(1-\alpha_{i}\right) x_{2},\frac{m_1 v_1+ m_2 v_2}{m_3}\right)\right)\\
&\phantom{=}\times G_{\eta}\left(y-\left(\alpha_{i} x_{1}+\left(1-\alpha_{i}\right) x_{2}\right)\right)G_{\eta}\left(v'-\frac{m_1 v_1+ m_2 v_2}{m_3}\right)d ydv'\biggr]\biggr|\\
\leq&\biggl| \sum_{i=1}^{I} p_{i} \times\biggl[\int_{B\left(\left(\alpha_{i} x_{1}+\left(1-\alpha_{i}\right) x_{2},\frac{m_1 v_1+ m_2 v_2}{m_3}\right),\sqrt{2}\eta\right)} \left|f\left(y,v'\right) - f\left(\alpha_{i} x_{1}+\left(1-\alpha_{i}\right) x_{2},\frac{m_1 v_1+ m_2 v_2}{m_3}\right)\right|\\
&\phantom{=}\times G_{\eta}\left(y-\left(\alpha_{i} x_{1}+\left(1-\alpha_{i}\right) x_{2}\right)\right)G_{\eta}\left(v'-\frac{m_1 v_1+ m_2 v_2}{m_3}\right)d ydv'\biggr]\biggr|\\
\leq&\biggl| \sum_{i=1}^{I} p_{i} \times\biggl[\int_{B\left(\left(\alpha_{i} x_{1}+\left(1-\alpha_{i}\right) x_{2},\frac{m_1 v_1+ m_2 v_2}{m_3}\right),\sqrt{2}\eta\right)} \sqrt{2}\eta\norm{f}_{C^1_b(R^{4d})} G_{\eta}\left(y-\left(\alpha_{i} x_{1}+\left(1-\alpha_{i}\right) x_{2}\right)\right)\\
&\phantom{=}\times G_{\eta}\left(v'-\frac{m_1 v_1+ m_2 v_2}{m_3}\right)d ydv'\biggr]\biggr|\\
\leq&\sqrt{2}\eta\norm{f}_{C^1_b(R^{4d})}.
\end{align*}

\underline{Case 4: Reaction of the form $S_i + S_k \rightarrow S_j + S_r$.}  For notational convenience, let us define the set
\begin{align*}
\mathcal{B}(x_1,x_2,\bm{v},w;\eta)&=B\left(\left(x_1,x_2,\frac{m_1 v_1 + m_2 v_2}{m_3 + m_4} - \frac{m_3}{m_3 + m_4}w\right),\sqrt{3}\eta\right),
\end{align*}
denoting the ball centered at $\left(x_1,x_2,\frac{m_1 v_1 + m_2 v_2}{m_3 + m_4} - \frac{m_3}{m_3 + m_4}w\right)\in\mathbb{R}^{3d}$ with radius $\sqrt{3}\eta$.  Then, we have
\begin{align*}
&\left|\int_{\yv} f(\bm{y},\bm{v'})\left(m_{\ell}^{\eta}(\bm{y},\bm{v'} \mid \bm{x},\bm{v})-m_{\ell}(\bm{y},\bm{v'} \mid \bm{x},\bm{v})\right) d \bm{y} d \bm{v'}\right|\\
=&\biggl|\int_{\mathbb{R}^{4 d}} (m_3 + m_4) f\left(y_{1}, y_{2}, v_1', v_2'\right) \rho(|v_1' - v_2'|)\biggl[G_{\eta}(m_3 v_1' + m_4 v_2' - m_1 v_1 - m_2 v_2)\\
&\phantom{=} \times \biggl(p \times G_{\eta}(x_1-y_1) G_{\eta}(x_2-y_2)+(1-p) \times G_{\eta}(x_1-y_2) G_{\eta}(x_2-y_1)\biggr)\nonumber\\
&\phantom{=}\qquad-\delta(m_3 v_1' + m_4 v_2' - m_1 v_1 - m_2 v_2)\\
&\phantom{=} \times \biggl(p \times \delta_{(x_1, x_2)}\left((y_1, y_2)\right)+(1-p) \times \delta_{(x_1, x_2)}\left((y_2, y_1)\right)\biggr)\biggr]d y_{1} d y_{2} d v_{1}' d v_{2}'\biggr|\\
\leq& C \biggl|p\int_{\mathbb{R}^{4 d}} \biggl[f\left(y_{1}, y_{2}, w+{v}_2', {v}_2'\right) - f\left(x_{1}, x_{2}, \frac{m_1 v_1 + m_2 v_2}{m_3 + m_4} + \frac{m_4}{m_3 + m_4}w, \frac{m_1 v_1 + m_2 v_2}{m_3 + m_4} - \frac{m_3}{m_3 + m_4}w\right)\biggr]\\
&\times \rho(|w|)G_{\eta}\left(v_2' - \frac{m_1 v_1 + m_2 v_2}{m_3 + m_4} + \frac{m_3}{m_3 + m_4}w\right)G_{\eta}(x_1-y_1) G_{\eta}(x_2-y_2) d v_{2}' d y_{1} d y_{2} d w+(1-p)\\
&\times\int_{\mathbb{R}^{4 d}} \biggl[f\left(y_{1}, y_{2}, w+{v}_2', {v}_2'\right) - f\left(x_{2}, x_{1}, \frac{m_1 v_1 + m_2 v_2}{m_3 + m_4} + \frac{m_4}{m_3 + m_4}w, \frac{m_1 v_1 + m_2 v_2}{m_3 + m_4} - \frac{m_3}{m_3 + m_4}w\right)\biggr]\\
&\phantom{=}\times \rho(|w|)G_{\eta}\left(v_2' - \frac{m_1 v_1 + m_2 v_2}{m_3 + m_4} + \frac{m_3}{m_3 + m_4}w\right)G_{\eta}(x_1-y_2) G_{\eta}(x_2-y_1) d v_{2}' d y_{1} d y_{2} d w \biggr|\\
\leq&C\biggl|p\int_{\Rd}\rho(|w|)\int_{\mathcal{B}(x_1,x_2,\bm{v},w;\eta)}
\biggl[f\left(y_{1}, y_{2}, w+{v}_2', {v}_2'\right) -  \\
&\hspace{4cm}-f\left(x_{1}, x_{2}, \frac{m_1 v_1 + m_2 v_2}{m_3 + m_4} + \frac{m_4}{m_3 + m_4}w, \frac{m_1 v_1 + m_2 v_2}{m_3 + m_4} -\frac{m_3}{m_3 + m_4}w\right)\biggr]\\
&\phantom{=}\times G_{\eta}\left(v_2' - \frac{m_1 v_1 + m_2 v_2}{m_3 + m_4} + \frac{m_3}{m_3 + m_4}w\right)G_{\eta}(x_1-y_1) G_{\eta}(x_2-y_2) d v_{2}' d y_{1} d y_{2} d w+\\
&+(1-p)\int_{\Rd}\rho(|w|)
\int_{\mathcal{B}(x_2,x_1,\bm{v},w;\eta)}
\biggl[f\left(y_{1}, y_{2}, w+{v}_2', {v}_2'\right) - \\
&\hspace{4cm} -f\left(x_{2}, x_{1}, \frac{m_1 v_1 + m_2 v_2}{m_3 + m_4} + \frac{m_4}{m_3 + m_4}w, \frac{m_1 v_1 + m_2 v_2}{m_3 + m_4} - \frac{m_3}{m_3 + m_4}w\right)\biggr]\\
&\phantom{=}G_{\eta}\left(v_2' - \frac{m_1 v_1 + m_2 v_2}{m_3 + m_4} + \frac{m_3}{m_3 + m_4}w\right)G_{\eta}(x_1-y_2) G_{\eta}(x_2-y_1) d v_{2}' d y_{1} d y_{2} d w \biggr|\\
\leq&C\sqrt{3}\eta\norm{f}_{C^1_b(R^{4d})}\biggl|p\int_{\Rd}\rho(|w|)
\int_{\mathcal{B}(x_1,x_2,\bm{v},w;\eta)} G_{\eta}\left(v_2' - \frac{m_1 v_1 + m_2 v_2}{m_3 + m_4} + \frac{m_3}{m_3 + m_4}w\right)\times\\
&\hspace{9cm}\times G_{\eta}(x_1-y_1) G_{\eta}(x_2-y_2) d v_{2}' d y_{1} d y_{2} d w\\
&\phantom{=}+(1-p)\int_{\Rd}\rho(|w|)
\int_{\mathcal{B}(x_2,x_1,\bm{v},w;\eta)}
G_{\eta}\left(v_2' - \frac{m_1 v_1 + m_2 v_2}{m_3 + m_4} + \frac{m_3}{m_3 + m_4}w\right)\times\\
&\hspace{9cm}\times G_{\eta}(x_1-y_2) G_{\eta}(x_2-y_1) d v_{2}' d y_{1} d y_{2} d w \biggr|\\
\leq&C\eta\norm{f}_{C^1_b(R^{4d})}
\end{align*}
for a generic constant, $C$.
\end{proof}

Next, we discuss the uniform moment bounds of $\displaystyle\sup_{\zeta\in (0,1)^{2}}\sup_{t\in[0,T]}\theta^{\vec{\zeta},j,4}_{t}$ in  Lemma \ref{L:MomentBounds}, which actually proves something slightly stronger. It proves that if a given $2p-$moment is finite pointwise in $\vec{\zeta}$, then it will be uniformly bounded in $\vec{\zeta}$. In this paper we use Lemma \ref{L:MomentBounds} with $p=2$. For the purpose of the more general result we make a slightly weaker assumption than Assumption \ref{A:AssumptionMomentFiniteness}.
\begin{assumption}\label{A:AssumptionMomentFinitenessGeneral}
Let $p$ be a given integer. In the context of Assumption \ref{A:AssumptionRho}, replace the last requirement of Assumption \ref{A:AssumptionRho} by
$\int_{\Rd \times \Rd}\left(|w|^{2p}+|u|^{2p}\right)\rho(w,u)dwdu<\infty$. In addition assume that
 $\sup_{t\in(0,T]}\sum_{j=1}^{J}\mathbb{E}\la |x|^{2p} +|v|^{2p},\mu_{t}^{\zeta, j}(dx,dv)\ra<\infty$. For $t=0$ specifically, we assume that $\displaystyle\sup_{\zeta\in\mathbb{R}_{+} \times \mathbb{R}_{+}}\sum_{j=1}^{J}\mathbb{E}\la |x|^{2p} +|v|^{2p},\mu_{0}^{\zeta, j}(dx,dv)\ra<C<\infty$.
\end{assumption}

\begin{lemma}\label{L:MomentBounds}
Let Assumptions \ref{A:AssumptionRho} as well as Assumption \ref{A:AssumptionMomentFinitenessGeneral} hold.
Then there exists a finite constant $C<\infty$ (not depending on $\zeta$) such that the following moment bound holds
\begin{align*}
\sup_{t\in[0,T]}\sum_{j=1}^{J} \theta^{\zeta,j,2p}_{t}&=\sup_{t\in[0,T]}\sum_{j=1}^{J}\mathbb{E}\la |x|^{2p} +|v|^{2p},\mu^{\zeta,j}_{t}(dx,dv)\ra \leq C<\infty,
\end{align*}
where the $p$ used above corresponds to the one from Assumption \ref{A:AssumptionRho} and Assumption \ref{A:AssumptionMomentFinitenessGeneral}.
\end{lemma}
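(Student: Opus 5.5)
We will prove the bound by applying the weak representation \eqref{Eq:PathDescription} with the unbounded test function $F(x,v)=|x|^{2p}+|v|^{2p}$ and closing with Gronwall's inequality. Since $F\notin C_b^2$, we first work with a truncation. We take $F_R(x,v)=\chi_R(|x|^2+|v|^2)F(x,v)$, or equivalently a smooth concave cutoff of $F$ growing at most like $F$, so that $F_R\uparrow F$ and $|\nabla F_R|,|\nabla^2F_R|$ obey the same polynomial bounds as those of $F$ uniformly in $R$. We apply \eqref{Eq:PathDescription} to $F_R$ and take expectations. The stochastic integrals against $dW$ and $d\tilde N_\ell$ are true martingales because $F_R$ is bounded, so they vanish in expectation. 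We then pass $R\to\infty$ by monotone convergence, which is justified because of the pointwise-in-$\zeta$ finiteness of $\sup_t\theta^{\zeta,j,2p}_t$ from Assumption \ref{A:AssumptionMomentFinitenessGeneral}. The result is an integral inequality for $\Theta_t:=\sum_j\theta^{\zeta,j,2p}_t$ in which every constant is independent of $\zeta$.

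\textbf{Transport part.} The generator gives $\mathcal L_jF=2p|x|^{2p-2}x\cdot v-2pb_j|v|^{2p}+b_j^2D_j\,\Delta_v|v|^{2p}$. Young's inequality gives $|x|^{2p-1}|v|\le C(|x|^{2p}+|v|^{2p})$. The friction term is nonpositive, and the Laplacian term is $\le C(1+|v|^{2p})$. Together with $\langle 1,\mu^{\zeta,j}_s\rangle\le C_\circ$ from Assumption \ref{A:molarBdd}, this yields $\mathbb E\langle\mathcal L_jF,\mu^{\zeta,j}_s\rangle\le C(1+\theta^{\zeta,j,2p}_s)$. This is the hypoelliptic coupling: the $x$-moment can grow only through the $v$-moment, and the combined moment $F$ absorbs it.

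\textbf{Reaction part.} Loss terms contribute $-\sum_r F(x^{(j)}_r,v^{(j)}_r)\le0$ and are discarded. The real work is the gain terms
\[
\int K_\ell\int\textstyle\sum_r F(y^{(j)}_r,v'^{(j)}_r)\,m^\eta_\ell(\bm y,\bm v'|\bm x,\bm v)\,d\bm y\,d\bm v'\,\lambda^{(\ell)}[\mu^\zeta_s](d\bm x,d\bm v).
\]
We bound them case by case following Assumption \ref{A:PlacementDensity}, with $\eta\le1$ so the mollifiers only shift arguments by at most $1$.

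For $S_i\to S_j$, the inner integral is $\le C(1+F(x,v))$.

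For $S_i+S_k\to S_j$, the product position is a convex combination of the substrate positions. By conservation of momentum and mass (Assumption \ref{A:conservationofMomentum}), the product velocity is a convex combination of the substrate velocities. Hence the inner integral is $\le C(1+F(x_1,v_1)+F(x_2,v_2))$. Integrating against $\mu^i\otimes\mu^k$ and using the mass bound $C_\circ$ on the other factor leaves a linear term $C(1+\theta^{i}+\theta^{k})$. We use $K_\ell\le C(K)$ here and do not need a second-moment product, since one factor is only a mass.

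For $S_i\to S_j+S_k$, we write $y_1=x+(1-\alpha_i)w$ and $y_2=x-\alpha_iw$, with $v'_1,v'_2$ obtained from $v$ and $u$ linearly. Then $F(y_r,v'_r)\le C(F(x,v)+|w|^{2p}+|u|^{2p}+1)$, and the $2p$-moment of $\rho$ is finite by Assumption \ref{A:AssumptionMomentFinitenessGeneral}.

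For $S_i+S_k\to S_j+S_r$, the positions are copied, and $v'_{1,2}=\bar v\pm c\,w$, where $\bar v=\frac{m_1v_1+m_2v_2}{m_3+m_4}$ and $w$ has density $\rho$. This gives $|v'|^{2p}\le C(|v_1|^{2p}+|v_2|^{2p}+|w|^{2p})$, and again $\rho$'s $2p$-moment closes the bound.

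Collecting all cases gives $\Theta_t\le\Theta_0+C\int_0^t(1+\Theta_s)\,ds$. Here $\Theta_0$ is uniformly bounded by assumption, and $C$ depends only on $b_j,D_j,C(K),C_\circ,L$, the masses, and the moments of $\rho$. Gronwall's inequality then yields $\sup_{t\le T}\Theta_t\le(\Theta_0+CT)e^{CT}$ uniformly in $\zeta$.

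\textbf{Main obstacle.} The delicate step is the truncation and limit exchange. The cutoff must keep $\mathcal L_jF_R$ and the gain terms dominated by $C(1+F)$ uniformly in $R$, so that the pointwise finiteness hypothesis lets us conclude by dominated or monotone convergence. The second concern is ensuring that the unbounded velocity redistribution in the $S_i+S_k\to S_j+S_r$ placement is controlled purely through the $2p$-moment of $\rho$. We will handle the cutoff first by choosing $\chi_R$ radial in $|x|^2+|v|^2$ with $|\chi_R'|\lesssim 1/R^2$.
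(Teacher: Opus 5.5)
Your proposal is correct and follows the same route as the paper: you apply the weak representation \eqref{Eq:PathDescription} to $|x|^{2p}+|v|^{2p}$, bound the generator and the gain terms case by case (convex combinations from conservation of mass and momentum, and the $2p$-moment of $\rho$ for the unbinding and $S_i+S_k\to S_j+S_r$ placements), and close with Gronwall. Your version is more careful in three places: the truncation $F_R$ justifies the unbounded test function and the vanishing of the martingale terms, which the paper only asserts; you handle the coupling term $2p|x|^{2p-2}x\cdot v$ explicitly via Young's inequality rather than reducing ``without loss of generality'' to $|v|^{2p}$; and you bound the gain terms directly under $m_\ell^\eta$ using $\mathrm{supp}\,G_\eta\subset B(0,\eta)$ with $\eta\le 1$, which avoids the paper's split into $m_\ell$ plus a weighted version of Lemma~\ref{L:placementDensityDifference} for $m_\ell^\eta-m_\ell$.
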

\begin{proof}
Recall that in Assumption \ref{A:AssumptionMomentFinitenessGeneral} we only assume that $\theta^{\zeta,j,2p}_{t}$ is finite (not necessarily uniformly bounded in $\zeta\in\mathbb{R}_{+} \times \mathbb{R}_{+}$). Our aim is to prove  $\displaystyle\sup_{t\in[0,T]}\sum_{j=1}^{J} \theta^{\zeta,j,2p}_{t}<C,$ i.e., these moments are in fact uniformly bounded in $\zeta\in\mathbb{R}_{+} \times \mathbb{R}_{+}$.

We will use Eq \eqref{Eq:PathDescription} for $f(x,v)=|x|^{2p} + |v|^{2p},$ and the validity of such operation essentially relies on the finiteness assumption of the involved moments per Assumption \ref{A:AssumptionMomentFinitenessGeneral}.
 \begin{align}
    &\mathbb{E}\left[\left\langle f, \mu_{t}^{\bm{\zeta}, j}\right\rangle\right]= \mathbb{E}\left[\fmi\right] + \mathbb{E}\left[\int_0^t \left\langle (\mathcal{L}_{j}f)(x,v), \msmzj(dx,dv)\right\rangle ds\right]\nonumber\\
      &\phantom{=} +\sum_{\ell=1}^{L}\mathbb{E}\biggl[\int_0^t \int_{\txtv}\frac{1}{\bm{\alpha}^{(\ell)}!}K_\ell(\bm{x},\bm{v})\biggl(-\sum_{r=1}^{\alj}f\left(x_r^{(j)},v_r^{(j)}\right)+\int_{\yv}\left(\sum_{r=1}^{\blj}f\left(y_r^{(j)},{v'}_{r}^{(j)}\right)\right)\nonumber\\
&\phantom{=}\times m_{\ell}^{\eta}\left(\bm{y},\bm{v'}|\bm{x},\bm{v}\right)d\bm{y}d\bm{v'}\biggr)\lmeasure ds\biggr],\label{Eq:Expectation_Mean_Formula}
\end{align}
with the operator $(\mathcal{L}_{j}f)(x,v)=v \nabla_{x}f(x,v) - b_{j} v \nabla_{v}f(x,v) + b_{j}^2 D_j \Delta_{v} f(x,v)$ defined via (\ref{Eq:Operators}).

Without loss of generality, we now use $f(x,v)=  |v|^{2p}$ in Eq \eqref{Eq:Expectation_Mean_Formula}. Note that by Assumption \ref{A:molarBdd} we have that $\la 1, \mu_t^{{\zeta}, j} \ra \leq C_{\circ}$ and by Assumption \ref{A:AssumptionMomentFinitenessGeneral} that $\displaystyle\sup_{\zeta\in\mathbb{R}_{+} \times \mathbb{R}_{+}}\theta^{\zeta,j,2p}_{0}\leq C<\infty$.

Firstly, we compute an upper bound for $\mathbb{E}\left[\int_0^t \left\langle (\mathcal{L}_{j}f)(x,v), \msmzj(dx,dv)\right\rangle ds\right]$. For this purpose, we have
\begin{align}
 &\mathbb{E}\left[\int_0^t \left\langle (\mathcal{L}_{j}f)(x,v), \msmzj(dx,dv)\right\rangle ds\right]\nonumber\\
 =&\mathbb{E}\left[\int_0^t \left\langle v \nabla_{x}f(x,v) - b_{j} v \nabla_{v}f(x,v) + b_{j}^2 D_j \Delta_{v} f(x,v), \msmzj(dx,dv)\right\rangle ds\right]\nonumber\\
 =&\mathbb{E}\left[\int_0^t \left\langle  - b_{j} 2p |v|^{2p} + b_{j}^2 D_j 2p(2p-1)|v|^{2p-2}, \msmzj(dx,dv)\right\rangle ds\right]\nonumber\\
 =&- b_{j} 2p \int_0^t \theta^{\zeta,j,2p}_{s}ds+b_{j}^2 D_j 2p(2p-1) \mathbb{E}\left[\int_0^t \left\langle  |v|^{2p-2}, \msmzj(dx,dv)\right\rangle ds\right]\nonumber\\
 \leq& - b_{j} 2p \int_0^t \theta^{\zeta,j,2p}_{s}ds\nonumber\\
 &\quad+b_{j}^2 D_j 2p(2p-1)\int_{0}^{t}\left[\frac{1}{p}\mathbb{E}\left\langle  1, \msmzj(dx,dv)\right\rangle+\frac{p-1}{p}\mathbb{E} \left\langle  |v|^{2p}, \msmzj(dx,dv)\right\rangle \right]ds\nonumber\\
\leq& - b_{j} 2p \int_0^t \theta^{\zeta,j,2p}_{s}ds+b_{j}^2 D_j 2p(2p-1) \int_{0}^{t} \left[\frac{C_{\circ}}{p}+\frac{p-1}{p}\theta^{\zeta,j,2p}_{s}\right]ds \nonumber\\
\leq& C\left[1+ \int_{0}^{t}\theta^{\zeta,j,2p}_{s} ds\right] \label{Eq:UniformOperatorBound}
\end{align}
for some constant $C<\infty$ that is uniform in $\zeta\in\mathbb{\R}_{+}\times \mathbb{\R}_{+}$.

We claim that a similar bound holds for the reaction term. Namely, there is a $C<\infty$ uniform in $\zeta \in\mathbb{\R}_{+}\times \mathbb{\R}_{+}$ such that
\begin{align}
&\sum_{\ell=1}^{L}\mathbb{E}\biggl[\int_0^t \int_{\txtv}\frac{1}{\bm{\alpha}^{(\ell)}!}K_\ell(\bm{x},\bm{v})\biggl(-\sum_{r=1}^{\alj}f\left(x_r^{(j)},v_r^{(j)}\right)+\int_{\yv}\left(\sum_{r=1}^{\blj}f\left(y_r^{(j)},{v'}_{r}^{(j)}\right)\right)\nonumber\\
&\phantom{=}\times m_{\ell}^{\eta}\left(\bm{y},\bm{v'}|\bm{x},\bm{v}\right)d\bm{y}d\bm{v'}\biggr)\lmeasure ds\biggr]\nonumber\\
&\leq C\paren{1 + \sum_{j=1}^{J}\int_{0}^{t}\theta^{\zeta,j,2p}_{s} ds} \label{Eq:UniformReactionsBound}
\end{align}
Indeed,
\begin{align}
&\sum_{\ell=1}^{L}\mathbb{E}\biggl[\int_0^t \int_{\txtv}\frac{1}{\bm{\alpha}^{(\ell)}!}K_\ell(\bm{x},\bm{v})\biggl(-\sum_{r=1}^{\alj}f\left(x_r^{(j)},v_r^{(j)}\right)+\int_{\yv}\left(\sum_{r=1}^{\blj}f\left(y_r^{(j)},{v'}_{r}^{(j)}\right)\right)\nonumber\\
&\phantom{=}\times m_{\ell}^{\eta}\left(\bm{y},\bm{v'}|\bm{x},\bm{v}\right)d\bm{y}d\bm{v'}\biggr)\lmeasure ds\biggr]\nonumber\\
\leq& C(K) \sum_{\ell=1}^{L}\mathbb{E}\biggl[\int_0^t \sum_{r=1}^{\alj}\left\langle |\vrj|^{2p},\msmzj(d\xrj,d\vrj)\right\rangle\biggr]+\sum_{\ell=1}^{L}\mathbb{E}\biggl[\int_0^t \int_{\txtv}\frac{1}{\bm{\alpha}^{(\ell)}!}K_\ell(\bm{x},\bm{v})\nonumber\\
&\phantom{=}\times\biggl(\int_{\yv}\left(\sum_{r=1}^{\blj}|\vprj|^{2p}\right)m_{\ell}\left(\bm{y},\bm{v'}|\bm{x},\bm{v}\right)d\bm{y}d\bm{v'}\biggr)\lmeasure ds\biggr]\nonumber\\
&\phantom{=}+\sum_{\ell=1}^{L}\mathbb{E}\biggl[\int_0^t \int_{\txtv}\frac{1}{\bm{\alpha}^{(\ell)}!}K_\ell(\bm{x},\bm{v})\biggl(\int_{\yv}\left(\sum_{r=1}^{\blj}|\vprj|^{2p}\right)\bigl(m_{\ell}^{\eta}\left(\bm{y},\bm{v'}|\bm{x},\bm{v}\right)\nonumber\\
&\phantom{=}-m_{\ell}\left(\bm{y},\bm{v'}|\bm{x},\bm{v}\right)\bigr)d\bm{y}d\bm{v'}\biggr)\lmeasure ds\biggr].\nonumber
\end{align}
We first estimate
\begin{align*}
    I &\coloneqq \mathbb{E}\biggl[ \int_{\txtv}\frac{1}{\bm{\alpha}^{(\ell)}!}K_\ell(\bm{x},\bm{v})\\
&\phantom{=} \times \biggl(\int_{\yv}\biggl(\sum_{r=1}^{\blj}|\vprj|^{2p}\biggr)\bigl(m_{\ell}\left(\bm{y},\bm{v'}|\bm{x},\bm{v}\right)d\bm{y}d\bm{v'}\biggr)\lmeasure \biggr]
\end{align*}
by analyzing each reaction case.

\underline{Case 1: Reaction of the form $S_{i} \rightarrow S_{j}$.}
\begin{align*}
    I &= \mathbb{E}\biggl[ \int_{\Rd \times \Rd}K_\ell(x,v)\left(\int_{\Rd \times \Rd}|v'|^{2p}\delta_{x}(y)\delta_{v}(v')dy dv' \right)\mszi(dx,dv) \biggr]\\
    &\leq C(K)\mathbb{E}\biggl[\int_{\Rd \times \Rd}|v|^{2p}\mszi(dx,dv) \biggr]\\
    &\leq C(K)\theta^{\zeta,j,2p}_{s}\\
\end{align*}

\underline{Case 2: Reaction of the form $S_{i} \rightarrow S_{j}+S_{k}$.}
\begin{align*}
    I &\leq \mathbb{E}\biggl[ \int_{\Rd \times \Rd}K_\ell(x,v)\biggl(\int_{\mathbb{R}^{4d}}\left(|v_1|^{2p} + |v_2|^{2p}\right)\rho\left(|y_1-y_2|,|v_1-v_2|\right) \sum_{i'=1}^{I} p_{i'} \times \delta\left(x-\left(\alpha_{i'} y_1+(1-\alpha_{i'}) y_2\right)\right)\\
    &\phantom{=}\times \delta\left(v-\frac{m_1 v_1+ m_2 v_2}{m_3}\right)dy_1 dy_2 dv_1 dv_2 \biggr)\mszi(dx,dv) \biggr]\\
    &\leq \sum_{i'=1}^{I} p_{i'} \mathbb{E}\biggl[ \int_{\Rd \times \Rd}K_\ell(x,v)\biggl(\int_{\mathbb{R}^{4d}}\left(|u+v_2|^{2p} + |v_2|^{2p}\right)\rho\left(|w|,|u|\right) \delta\left(x-\left(\alpha_{i'} w+ y_2\right)\right)\\
    &\phantom{=}\times \delta\left(v_2 - \left(v- \frac{m_1}{m_3}u\right)\right)dy_2dv_2 dw du \biggr)\mszi(dx,dv) \biggr]\text{ (by Assumption \ref{A:conservationofMomentum})}\\
    &\leq \sum_{i'=1}^{I} p_{i'} \mathbb{E}\biggl[ \int_{\Rd \times \Rd}K_\ell(x,v)\biggl(\int_{\Rd \times \Rd}\left(|v + \frac{m_2}{m_3}u|^{2p} + |v - \frac{m_1}{m_3}u|^{2p}\right)\rho\left(|w|,|u|\right)dw du \biggr)\mszi(dx,dv) \biggr]\\
    &\leq C\sum_{i'=1}^{I} p_{i'} \mathbb{E}\biggl[ \int_{\Rd \times \Rd}K_\ell(x,v)\biggl(\int_{\Rd \times \Rd}\left(|v|^{2p} + |u|^{2p}\right)\rho\left(|w|,|u|\right)dw du \biggr)\mszi(dx,dv) \biggr]\\
    &\leq C(C_{\circ},K)(1+\theta^{\zeta,i,2p}_{s})
\end{align*}

\underline{Case 3: Reaction of the form $S_{i}+S_{k} \rightarrow S_{j}$.}
\begin{align*}
    I &= \sum_{i'=1}^{I} p_{i'} \mathbb{E}\biggl[ \int_{\mathbb{R}^{4d}}K_\ell(x_1,x_2,v_1,v_2)\biggl(\int_{\Rd \times \Rd}|v'|^{2p}\delta\left(y-\left(\alpha_{i'} x_1+(1-\alpha_{i'}) x_2\right)\right)\\
    &\phantom{=}\times \delta\left(v'-\frac{m_1 v_1+ m_2 v_2}{m_3}\right)dy dv' \biggr)\mszi(dx_1,dv_1) \mszk(dx_2,dv_2) \biggr]\\
    &\leq \sum_{i'=1}^{I} p_{i'} \mathbb{E}\biggl[ \int_{\mathbb{R}^{4d}}K_\ell(x_1,x_2,v_1,v_2)\left|\frac{m_1 v_1+ m_2 v_2}{m_3}\right|^{2p}\mszi(dx_1,dv_1) \mszk(dx_2,dv_2) \biggr]\\
    &\leq C(C_{\circ},K)\sum_{j=1}^J\theta^{\zeta,j,2p}_{s}
\end{align*}

\underline{Case 4: Reaction of the form $S_i + S_k \rightarrow S_j + S_r$.}
\begin{align*}
    I &\leq \mathbb{E}\biggl[ \int_{\mathbb{R}^{4d}}K_\ell(x_1,x_2,v_1,v_2)\\
    &\phantom{=}\times \biggl(\int_{\mathbb{R}^{4d}}\left(|v_1'|^{2p} +|v_2'|^{2p}\right)(m_3 + m_4) \rho(|v_1' - v_2'|)\delta(m_3 v_1' + m_4 v_2' - m_1 v_1 - m_2 v_2)\\
    &\phantom{=}\times \biggl(p \times \delta_{(x_1, x_2)}\left((y_1, y_2)\right)\\
    &\phantom{=}\qquad +(1-p) \times \delta_{(x_1, x_2)}\left((y_2, y_1)\right)\biggr)dy_1 dy_2 dv_1' dv_2'\biggr) \mszi(dx_1,dv_1) \mszk(dx_2,dv_2)\biggr]\\
    &\leq C(K)\mathbb{E}\biggl[ \int_{\mathbb{R}^{4d}}\biggl(\int_{\Rd \times \Rd}\left(|v_1'|^{2p} +|v_2'|^{2p}\right)(m_3 + m_4) \rho(|v_1' - v_2'|)\\
    &\phantom{=}\times \delta(m_3 v_1' + m_4 v_2' - m_1 v_1 - m_2 v_2)dv_1'dv_2'\biggr) \mszi(dx_1,dv_1) \mszk(dx_2,dv_2)\biggr]\\
     &\leq C(K)\mathbb{E}\biggl[ \int_{\mathbb{R}^{4d}}\biggl(\frac{m_3+m_4}{m_3}\int_{\mathbb{R}^{d}}\left(\left|
    \frac{m_1v_1 + m_2v_2 -m_4v_2'}{m_3}\right|^{2p} +|v_2'|^{2p}\right)\\
    &\phantom{=}\times \rho\left(\left| \frac{m_1v_1 + m_2v_2}{m_3} -\frac{m_3 + m_4}{m_3}v_2'\right|\right)dv_2'\biggr) \mszi(dx_1,dv_1) \mszk(dx_2,dv_2)\biggr]\\
     &\leq C(K)\mathbb{E}\biggl[ \int_{\mathbb{R}^{4d}}\biggl(\frac{m_3+m_4}{m_3}\int_{\mathbb{R}^{d}}\left(\left|
    \frac{m_1v_1 + m_2v_2}{m_3}\right|^{2p} + \left|
    \frac{m_3 + m_4}{m_3}v'_2\right|^{2p} \right)\\
    &\phantom{=}\times \rho\left(\left|\frac{m_1v_1 + m_2v_2}{m_3} -\frac{m_3 + m_4}{m_3}v_2'\right|\right)dv_2'\biggr) \mszi(dx_1,dv_1) \mszk(dx_2,dv_2)\biggr]\\
    &\phantom{=}\left(u = \frac{m_1v_1 + m_2v_2}{m_3} -\frac{m_3 + m_4}{m_3}v_2'\right)\\
     &\leq C(K) \mathbb{E}\biggl[ \int_{\mathbb{R}^{4d}}\biggl(\int_{\mathbb{R}^{d}}\left(\left|
    \frac{m_1v_1 + m_2v_2}{m_3}\right|^{2p} + \left|
   u\right|^{2p} \right)\rho(\left| u\right|)du\biggr) \mszi(dx_1,dv_1) \mszk(dx_2,dv_2)\biggr]\\
    &\leq C(K) \mathbb{E}\biggl[ \int_{\mathbb{R}^{4d}}\left(1+\left|\frac{m_1 v_1+ m_2 v_2}{m_3}\right|^{2p}\right)\mszi(dx_1,dv_1) \mszk(dx_2,dv_2) \biggr]\\
&\leq C(C_{\circ},K)\left(1+\sum_{j=1}^J\theta^{\zeta,j,2p}_{s}\right)
\end{align*}

It remains to bound
\begin{align*}
  II &\coloneqq \mathbb{E}\biggl[ \int_{\txtv}\frac{1}{\bm{\alpha}^{(\ell)}!}K_\ell(\bm{x},\bm{v})\biggl(\int_{\yv}\left(\sum_{r=1}^{\blj}|\vprj|^{2p}\right)\bigl(m_{\ell}^{\eta}\left(\bm{y},\bm{v'}|\bm{x},\bm{v}\right)\\
&\phantom{=}-m_{\ell}\left(\bm{y},\bm{v'}|\bm{x},\bm{v}\right)\bigr)d\bm{y}d\bm{v'}\biggr)\lmeasure \biggr].
\end{align*}

The analysis of the innermost integral is done as in the proof of Lemma \ref{L:placementDensityDifference} using the decomposition in terms of the different types of reactions (similar to the approach above). The role of the test function $f(y,v')$ in that lemma is played here by  $f(y,v')= \sum_{r = 1}^{\beta_{\ell j}}\left(|\yrj|^{2p} + |{v'}_r^{(j)}|^{2p}\right)$. The primary change in the estimates is that we can no longer use the uniform $C_b^{1}(\R^{d}\times \R^{d})$ norm when bounding $\abs{f(y,v') - f(x,v)}$ over balls about $(x,v)$, since the moments are unbounded over free-space. We instead use $(x,v)$-dependent estimates over the balls combined with Assumption~\ref{A:AssumptionRho} to obtain bounds of the form
\begin{equation*}
\left|\int_{\mathbb{Y}^{(\ell)}}   |\vprj|^{2p}  \left(m^{\eta}_\ell\left(\vec{y}, \bm{v'} |\vec{x},\bm{v} \right)-m_\ell\left(\vec{y}, \bm{v'} |\vec{x},\bm{v} \right)\right)\, d \vec{y}d \vec{v'}\right|  \leq C(p) \eta \paren{1 + \sum_{j=1}^J \sum_{r = 1}^{\alpha_{\ell j}}\abs{v_{r}^{(j)}}^{2p-1}}.
\end{equation*}

Combining this with (\ref{Eq:UniformOperatorBound}) and (\ref{Eq:UniformReactionsBound}) we get from (\ref{Eq:Expectation_Mean_Formula}) with $f(x,v)=|x|^{2p} +|v|^{2p}$ the bound
\begin{align}
\theta^{\zeta,j,2p}_{t}
&
\leq\theta^{\zeta,j,2p}_{0} + C + C\int_{0}^{t} \theta^{\zeta,j,2p}_{s} ds+C\sum_{j'=1}^{J}\int_{0}^{t} \theta^{\zeta,j',2p}_{s} ds.
\end{align}
Summing the later over $j=1,\cdots,J$ and applying standard Gronwall lemma then gives as desired
\begin{align}
\sup_{t\in[0,T]}\sum_{j=1}^{J} \theta^{\zeta,j,2p}_{t}&\leq C,
\end{align}
for a finite constant $C<\infty$ that does not depend on $\zeta$.

\end{proof}

\begin{lemma}\label{L:boundoffm}
For the $\ell-$th reaction, $1 \leq \ell \leq L,$ let $\eta$ be sufficiently small, $\epsilon >0,$ and $R \in \mathbb{R}$ large enough as in Assumption \ref{A:AssumptionRho}. Consider a fixed constant $D> \left\{\frac{2m_{1}}{m_{1}+m_{2}}, \frac{2m_{2}}{m_{1}+m_{2}}\right\}$ entering only in the case of a second order reaction $\mathcal{R}_{\ell}$ of the form $S_i + S_k \rightarrow S_j + S_r$. Also, let $\sigma^{*}\in [1,2)$ be  another fixed constant depending only on the stoichiometric coefficients $\alpha_{i}$ and particle masses $m_1,m_2,m_3$ for a second order reaction $\mathcal{R}_{\ell}$ of the form $S_i + S_k \rightarrow S_j$ (to be explicitly determined in the proof). The following estimates hold for $m$ large enough,
\begin{align*}
&\mathbb{E}\biggl[\supint\int_0^t \int_{\txtv}\frac{1}{\bm{\alpha}^{(\ell)}!}K_\ell(\bm{x},\bm{v})\biggl(\int_{\yv}\left(\sum_{r=1}^{\blj}f_m\left(y_r^{(j)},{v'}_{r}^{(j)}\right)\right)\\
&\phantom{=}\times m_{\ell}^{\eta}\left(\bm{y},\bm{v'}|\bm{x},\bm{v}\right)d\bm{y}d\bm{v'}\biggr)\lmeasure ds\biggr]\\
\leq&2 C(K)(C_{\circ} \vee 1) \displaystyle \sup_{1\leq i\leq J} \mathbb{E}\left[\sup_{t\in[0,T]}\int_0^t \left\langle f_{m-1-(D+2)R}(x,v), \msmzi(dx,dv)\right\rangle ds \right]\\
&\phantom{=}+2C(K)C_{\circ}\displaystyle \sup_{1\leq i\leq J} \mathbb{E}\left[\sup_{t\in[0,T]}\int_0^t  \left\langle f_{\floor{\frac{m-1}{1+\sigma^{*}}}}(x,v), \msmzi(dx,dv)ds \right \rangle ds\right]\\
&\phantom{=}+ 2 C(K) C_{\circ}^2\left\|f_{m}\right\|_{C_{b}^{2}\left(\mathbb{R}^{d}\times \mathbb{R}^{d}\right)} T \left(\epsilon + C\eta \right).
\end{align*}
\end{lemma}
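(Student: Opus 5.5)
The plan is to treat each reaction type from Assumption \ref{A:PlacementDensity} separately. In each case we split the placement integral into a main part with the limiting density $m_\ell$ and an error part with $m_\ell^\eta-m_\ell$. The error part is handled by Lemma \ref{L:placementDensityDifference} applied to $f=\sum_r f_{m}(y_r^{(j)},{v'}_r^{(j)})$, whose $C_b^2$ norm is bounded uniformly in $m$. Combined with $K_\ell\le C(K)$ and $\int\lambda^{(\ell)}[\mu^\zeta_{s-}]\le C_\circ^{|\alpha^{(\ell)}|}$, this gives the $2C(K)C_\circ^2\|f_m\|_{C_b^2}T\,C\eta$ contribution. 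After that, we pull $\sup_t$ inside, since the integrand is nonnegative and we may integrate up to $T$.

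For the main part, the key observation is that $f_m(y,v')\ne0$ forces $\|y\|+\|v'\|>m-1$, and this must be transferred back to a largeness condition on the substrate coordinates.

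\emph{Case $S_i\to S_j$.} Since $y=x$ and $v'=v$, the integrand is just $f_m(x,v)\le f_{m-1-(D+2)R}(x,v)$.

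\emph{Case $S_i\to S_j+S_k$.} Write $y_1=y_2+w$ and $v_1'=v_2'+u$ with $(w,u)\sim\rho$. Split according to whether $|w|+|u|>R$ or $|w|+|u|\le R$. The first region contributes at most $\epsilon$ by Assumption \ref{A:AssumptionRho}, which yields the $\epsilon$ term. In the second region, the delta constraints give $y_r=x+O(R)$ and $v'_r=v+O(R)$ with mass ratio constants, so $\|x\|+\|v\|>m-1-(D+2)R$ and $f_m(y_r,v'_r)\le f_{m-1-(D+2)R}(x,v)$.

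\emph{Case $S_i+S_k\to S_j+S_r$.} We again split on $|u|\le R$, where $u=v_1'-v_2'$. Conservation of momentum (Assumption \ref{A:conservationofMomentum}) gives $v_r'=\frac{m_1v_1+m_2v_2}{m_3+m_4}\pm c\,u$, and positions are simply permuted. If $\|y_r\|+\|v_r'\|>m-1$, then some substrate satisfies $\|x_i\|+\|v_i\|\gtrsim m-1-(D+2)R$. This is where the constant $D>\frac{2m_i}{m_1+m_2}$ enters: it bounds $|m_1v_1+m_2v_2|/(m_1+m_2)$ by a combination of $|v_1|,|v_2|$. We then bound the indicator by $f_{m-1-(D+2)R}(x_1,v_1)+f_{m-1-(D+2)R}(x_2,v_2)$ and integrate out the other substrate using mass $\le C_\circ$. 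This produces the first term with the factor $C_\circ\vee1$.

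\emph{Case $S_i+S_k\to S_j$.} Here there is no $\rho$. We have $y$ a convex combination of $x_1,x_2$ and $v'=(m_1v_1+m_2v_2)/m_3$ with $m_3=m_1+m_2$, so $\|y\|+\|v'\|\le (1+\sigma^*)\max_i(\|x_i\|+\|v_i\|)$ for an explicit $\sigma^*\in[1,2)$ determined by $\alpha_i$ and the mass ratios. Hence $f_m(y,v')\le f_{\lfloor (m-1)/(1+\sigma^*)\rfloor}(x_1,v_1)+f_{\lfloor (m-1)/(1+\sigma^*)\rfloor}(x_2,v_2)$, which gives the second term after integrating out one factor.

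For every case we also use that the $f_k$ are monotone in $k$ and bounded by indicators of $\{\|x\|+\|v\|>k-1\}$.

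The main obstacle is the bimolecular exchange case. The product velocities are only tied to the substrates through the center-of-mass velocity, while the relative velocity $u$ is unconstrained. Large product phase-space norm must therefore be attributed either to the tail of $\rho$ or to one substrate being far out, and this must be done carefully enough that the shift in $m$ is a fixed multiple of $R$, namely $(D+2)R$, independent of $m$. A second delicate point is checking that $\sigma^*<2$ in the binding case, which is needed for the later iteration in Lemma \ref{L:massOutofCompact}.
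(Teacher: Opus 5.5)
Your treatment of the $\eta$-error and of the reactions $S_i\to S_j$, $S_i\to S_j+S_k$ and $S_i+S_k\to S_j$ is fine and matches the paper. The gap is in the exchange case $S_i+S_k\to S_j+S_r$. You claim that on $\{|u|\le R\}$, $\|y_r\|+\|v_r'\|>m-1$ forces $\|x_i\|+\|v_i\|>m-1-(D+2)R$ for some substrate. That claim is false.

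The product placed at $x_1$ (in either the $p$ or the $1-p$ branch) carries velocity $V+\frac{m_4}{m_1+m_2}u$ or $V-\frac{m_3}{m_1+m_2}u$, where $V=\frac{m_1v_1+m_2v_2}{m_1+m_2}$. Since $V$ can be dominated by $v_2$, the product's phase-space norm mixes the position of one substrate with the velocity of the other. Concretely, take $m_1=m_2$, $u=0$, $v_1=0$, $x_2=0$ and $|x_1|=|v_2|=\frac{7}{10}(m-1)$. The product at $x_1$ has $\|x_1\|+\|v_2\|/2=\frac{21}{20}(m-1)>m$ for large $m$, so $f_m=1$ there. Both substrates, however, have $\|x_i\|+\|v_i\|=\frac{7}{10}(m-1)$, so $f_{m-1-(D+2)R}$ vanishes at both once $m$ is large.

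The convexity bound $|V|\le\max(|v_1|,|v_2|)$ that you attribute to $D$ needs no $D$ at all. It only yields $\max_i(\|x_i\|+\|v_i\|)>(m-1-R)/2$, which is a multiplicative loss, not an additive one.

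The missing ingredient is a split on the \emph{substrate} velocities, combined with the uniform moment bound.

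\textbf{Small substrate velocities.} On $\{\max(|v_1|,|v_2|)<R\}$ one has $|v_1-v_2|<2R$.
\begin{itemize}
\item If $\max_r|v'_r-v_r|\le DR$, then both the direct and the cross pairs satisfy $|v'_r-v_s|\le (D+2)R$. This gives exactly the index $m-1-(D+2)R$.
\item If $|v'_r-v_r|>DR$ for some $r$, use $v'_1-v_1=\frac{m_2(v_2-v_1)+m_4u}{m_1+m_2}$ and $v'_2-v_2=\frac{m_1(v_1-v_2)-m_3u}{m_1+m_2}$. These force $|u|\ge C(D)R$, with $C(D)>0$ precisely because $D>\frac{2m_i}{m_1+m_2}$. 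This is where $D$ actually enters. The $\rho$-tail then contributes $\epsilon$.
\end{itemize}

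\textbf{Large substrate velocities.} On the complement $\{\max(|v_1|,|v_2|)\ge R\}$, bound the integrand by $2\|f_m\|_{C_b^2}$. Markov's inequality together with Lemma \ref{L:MomentBounds} shows that the $\lambda^{(\ell)}$-mass of this set is, in expectation, $O(C_\circ/R)$ uniformly in $\zeta$. This is at most $\epsilon$ for $R$ large.

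Your proposal never invokes Lemma \ref{L:MomentBounds}. Without controlling $|v_1-v_2|$, the additive shift $m-1-(D+2)R$ is not attainable. Alternatively, you could keep your cruder bound by $f_{\lfloor (m-1-R)/2\rfloor}$ and absorb Case 4 into the $\sigma^*$-term by taking $\sigma^*\in(1,2)$. That route avoids the moment bound, but it is a different argument from the one you wrote and would have to be carried out explicitly.
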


\begin{proof}
    \underline{Case 1: Reaction of the form $S_{i} \rightarrow S_{j}$.}
\begin{align*}
&\int_0^t \int_{\txtv}\frac{1}{\bm{\alpha}^{(\ell)}!}K_\ell(\bm{x},\bm{v})\biggl(\int_{\yv}\left(\sum_{r=1}^{\blj}f_m\left(y_r^{(j)},{v'}_{r}^{(j)}\right)\right)\\
&\phantom{=}\times m_{\ell}\left(\bm{y},\bm{v'}|\bm{x},\bm{v}\right)d\bm{y}d\bm{v'}\biggr)\lmeasure ds\\
=&\int_0^t\int_{\Rd \times \Rd}K_\ell(x,v)\left(\int_{\Rd \times \Rd}f_m\left(y,v'\right)\delta_{x}(y)\delta_{v}(v')dy dv' \right)\msmzi(dx,dv)\\
=&\int_0^t\int_{\Rd \times \Rd}K_\ell(x,v)f_m\left(x,v\right)\msmzi(dx,dv)\\
\leq&C(K)\int_0^t \left\langle f_{m}(x,v), \msmzi(dx,dv)\right\rangle ds.
\end{align*}

\underline{Case 2: Reaction of the form $S_{i} \rightarrow S_{j}+S_{k}$.}
\begin{align*}
&\int_0^t \int_{\txtv}\frac{1}{\bm{\alpha}^{(\ell)}!}K_\ell(\bm{x},\bm{v})\biggl(\int_{\yv}\left(\sum_{r=1}^{\blj}f_m\left(y_r^{(j)},{v'}_{r}^{(j)}\right)\right)\\
&\phantom{=}\times m_{\ell}\left(\bm{y},\bm{v'}|\bm{x},\bm{v}\right)d\bm{y}d\bm{v'}\biggr)\lmeasure ds\\
=&\int_0^t\int_{\Rd \times \Rd}K_\ell(x,v)\biggl(\int_{\mathbb{R}^{4d}}\left(f_m\left(y_1,v_1\right)+f_m\left(y_2,v_2\right)\right)\rho\left(|y_1-y_2|,|v_1-v_2|\right) \\
&\phantom{=}\times\sum_{i=1}^{I} p_{i} \delta\left(x-\left(\alpha_{i} y_1+(1-\alpha_{i}) y_2\right)\right)\delta\left(v-\frac{m_1 v_1+ m_2 v_2}{m_3}\right)dy_1 dy_2 dv_1 dv_2 \biggr)\msmzi(dx,dv)\\
\leq&C(K)\int_0^t\biggl\langle\biggl(\int_{|y_1-y_2|+|v_1-v_2|\leq R}\left(f_m\left(y_1,v_1\right)+f_m\left(y_2,v_2\right)\right)\rho\left(|y_1-y_2|,|v_1-v_2|\right) \\
&\phantom{=}\times\sum_{i=1}^{I} p_{i} \delta\left(x-\left(\alpha_{i} y_1+(1-\alpha_{i}) y_2\right)\right)\delta\left(v-\frac{m_1 v_1+ m_2 v_2}{m_3}\right)dy_1 dy_2 dv_1 dv_2 \biggr),\msmzi(dx,dv)\biggr\rangle ds\\
&\phantom{=}+C(K)\int_0^t\biggl\langle\biggl(\int_{|y_1-y_2|+|v_1-v_2|> R}\left(f_m\left(y_1,v_1\right)+f_m\left(y_2,v_2\right)\right)\rho\left(|y_1-y_2|,|v_1-v_2|\right) \\
&\phantom{=}\times\sum_{i=1}^{I} p_{i} \delta\left(x-\left(\alpha_{i} y_1+(1-\alpha_{i}) y_2\right)\right)\delta\left(v-\frac{m_1 v_1+ m_2 v_2}{m_3}\right)dy_1 dy_2 dv_1 dv_2 \biggr),\msmzi(dx,dv)\biggr\rangle ds\\
\leq&C(K)\int_0^t\biggl\langle\biggl(\int_{|y_1-y_2|+|v_1-v_2|\leq R}2f_{m-1-R}(x,v)\rho\left(|y_1-y_2|,|v_1-v_2|\right) \\
&\phantom{=}\times\sum_{i=1}^{I} p_{i} \delta\left(x-\left(\alpha_{i} y_1+(1-\alpha_{i}) y_2\right)\right)\delta\left(v-\frac{m_1 v_1+ m_2 v_2}{m_3}\right)dy_1 dy_2 dv_1 dv_2 \biggr),\msmzi(dx,dv)\biggr\rangle ds\\
&\phantom{=}+C(K)\norm{f}_{C_b^0(\Rd \times \Rd)}\int_0^t\biggl\langle\sum_{i=1}^{I} p_{i}\biggl(\int_{|w|+|u|> R}\int_{\R^{2d}}\rho\left(|w|,|u|\right) \\
&\phantom{=}\times \delta\left(x-\alpha_{i}w-y_2\right)\delta\left(v-\frac{m_1}{m_3}u-v_2\right)dy_2 dv_2 dw du  \biggr),\msmzi(dx,dv)\biggr\rangle ds \text{ (by Assumption \ref{A:conservationofMomentum})}\\
\leq&2C(K)\int_0^t\biggl\langle f_{m-1-R}(x,v)\biggl(\int_{|y_1-y_2|+|v_1-v_2|\leq R}m_{\ell}^{\eta}\left(y_1,y_2,v_1,v_2|x,v\right)dy_1 dy_2 dv_1 dv_2 \biggr),\msmzi(dx,dv)\biggr\rangle ds\\
&\phantom{=}+C(K)\norm{f}_{C_b^0(\Rd \times \Rd)}\int_0^t\biggl\langle\sum_{i=1}^{I} p_{i}\biggl(\int_{|w|+|u|> R}\int_{\R^{2d}}\rho\left(|w|,|u|\right) \\
&\phantom{=}\times \delta\left(x-\alpha_{i}w-y_2\right)\delta\left(v-\frac{m_1}{m_3}u-v_2\right)dy_2 dv_2 dw du  \biggr),\msmzi(dx,dv)\biggr\rangle ds\\
\leq&2C(K)\int_0^t \left\langle f_{m-1-R}(x,v), \msmzi(dx,dv)\right\rangle ds + C(K)\norm{f}_{C^0_b(\Rd \times \Rd)}C_{\circ}t\epsilon.
\end{align*}
   \underline{ Case 3: Reaction of the form $S_{i}+S_{k} \rightarrow S_{j}$.}

First we notice that the conservation of momentum $m_{3}v_{3}=m_1 v_1+ m_2 v_2$ together with the constraint $m_{3}=m_{1}+m_{2}$ imply that we can write
   $v_{3}=b v_1+ (1-b) v_2$ with $b\in(0,1)$. The latter, together with the relation $x_{3}=\alpha_{i} x_1+(1-\alpha_{i}) x_2$ imply that for $c_{i}=\alpha_{i}-b$
   \begin{align*}
|x_{3}|+ |v_{3}|&\leq \alpha_{i} |x_1|+(1-\alpha_{i}) |x_2| +b |v_1|+ (1-b) |v_2|\nonumber\\
&=(c_{i}+b) |x_1|+(1-c_{i}-b) |x_2| +b |v_1|+ (1-b) |v_2|\nonumber\\
&=b(|x_{1}|+|v_{1}|)+(1-b)(|x_{2}|+|v_{2}|)+c_{i}(|x_{1}|-|x_{2}|)\nonumber\\
&\leq \max_{j=1,2}\{|x_{j}|+|v_{j}|\}+|c_{i}|(|x_{1}|+|x_{2}|)\nonumber\\
&\leq (1+2\max_{i}|c_{i}|)\max_{j=1,2}\{|x_{j}|+|v_{j}|\}\nonumber\\
&\leq (1+\sigma)\max_{j=1,2}\{|x_{j}|+|v_{j}|\},
   \end{align*}
where $\sigma^{*}=\max\{2 \max_{i}|c_{i}|,1\} \in[1,2)$. This means that if $m-1< |x_{3}|+|v_{3}|$ then we have
\begin{align*}
\frac{m-1}{1+\sigma^{*}}&< \max_{j=1,2}\{|x_{j}|+|v_{j}|\}.
\end{align*}
Consequently, this yields that for pairs $(x_{j},v_{j}), j=1,2,3$ such that $m_{3}v_{3}=m_1 v_1+ m_2 v_2$ and $x_{3}=\alpha_{i} x_1+(1-\alpha_{i}) x_2$, we have
\begin{align*}
f_{m}(x_{3}, v_{3})&\leq f_{\floor{\frac{m-1}{1+\sigma^{*}}}}(x_{1},v_{1})+ f_{\floor{\frac{m-1}{1+\sigma^{*}}}}(x_{2},v_{2})
\end{align*}
Therefore, we obtain for the term in question
    \begin{align*}
&\int_0^t \int_{\txtv}\frac{1}{\bm{\alpha}^{(\ell)}!}K_\ell(\bm{x},\bm{v})\biggl(\int_{\yv}\left(\sum_{r=1}^{\blj}f_m\left(y_r^{(j)},{v'}_{r}^{(j)}\right)\right)\\
&\phantom{=}\times m_{\ell}\left(\bm{y},\bm{v'}|\bm{x},\bm{v}\right)d\bm{y}d\bm{v'}\biggr)\lmeasure ds\\
\leq&\int_0^t\biggl\langle \biggl\langle K_\ell(x_1,x_2,v_1,v_2)\biggl(\int_{\Rd \times \Rd}f_m(y,v')\sum_{i=1}^{I} p_{i}\delta\left(y-\left(\alpha_{i} x_1+(1-\alpha_{i}) x_2\right)\right)\\
    &\phantom{=}\times \delta\left(v'-\frac{m_1 v_1+ m_2 v_2}{m_3}\right)dy dv' \biggr),\msmzi(dx_1,dv_1)\biggr\rangle, \msmzk(dx_2,dv_2)\biggr\rangle ds\\
\leq&C(K)\int_0^t\left\langle \left\langle \left(f_{\floor{\frac{m-1}{1+\sigma^{*}}}}(x_1,v_1)+f_{\floor{\frac{m-1}{1+\sigma^{*}}}}(x_2,v_2)\right),\msmzi(dx_1,dv_1)\right\rangle, \msmzk(dx_2,dv_2)\right\rangle ds \\
&\qquad\text{ (by Assumption \ref{A:conservationofMomentum})}\\
\leq&C(K)C_{\circ}\int_0^t\left( \left\langle f_{\floor{\frac{m-1}{1+\sigma^{*}}}}(x_1,v_1), \msmzi(dx_1,dv_1)\right \rangle + \left \langle f_{\floor{\frac{m-1}{1+\sigma^{*}}}}(x_2,v_2), \msmzk(dx_2,dv_2)\right \rangle \right)ds.
\end{align*}

 \underline{Case 4: Reaction of the form $S_i + S_k \rightarrow S_j + S_r$.}
 
Recall Assumption~\ref{A:conservationofMomentum} that conservation of momentum, $m_3 v_1' + m_4 v_2' = m_1 v_1 + m_2 v_2$, and conservation of mass, $m_3+m_4=m_1+m_2$, hold. We define the (normalized) velocity component of the placement density by
\begin{equation*}
m_{\ell}(v'_1,v'_2 |v_1,v_2)=(m_3 + m_4)^d \rho(|v_1' - v_2'|)\delta(m_3 v_1' + m_4 v_2' - m_1 v_1 - m_2 v_2)
\end{equation*}
and let
\begin{equation*}
g_m(x_1,x_2,v_1',v_2')=p f_m\left(x_{1}, v_1'\right)+p f_m\left(x_{2}, v_2'\right) + (1-p)f_m\left(x_{2}, v_1'\right)+(1-p) f_m\left(x_{1}, v_2'\right).
\end{equation*}
We will estimate the reaction integral term by splitting it into three components as follows.
\begin{align*}
&\int_0^t \int_{\txtv}\frac{1}{\bm{\alpha}^{(\ell)}!}K_\ell(\bm{x},\bm{v})\biggl(\int_{\yv}\left(\sum_{r=1}^{\blj}f_m\left(y_r^{(j)},{v'}_{r}^{(j)}\right)\right)\\
&\phantom{=}\times m_{\ell}\left(\bm{y},\bm{v'}|\bm{x},\bm{v}\right)d\bm{y}d\bm{v'}\biggr)\lmeasure ds\\
&\leq\int_0^t\biggl \langle \biggl \langle K_\ell(x_1, x_2,v_1,v_2)\biggl(\int_{\mathbb{R}^{4d}}(m_3 + m_4)^d \left(f_m\left(y_{1}, v_1'\right)+f_m\left(y_{2}, v_2'\right)\right) \rho(|v_1' - v_2'|)\\
&\phantom{=}\times\delta(m_3 v_1' + m_4 v_2' - m_1 v_1 - m_2 v_2)\biggl(p \times \delta_{(x_1, x_2)}\left((y_1, y_2)\right)\\
&\phantom{=}+(1-p) \times \delta_{(x_1, x_2)}\left((y_2, y_1)\right)\biggr)d y_{1} d y_{2} d v_{1}' d v_{2}'\biggr),\msmzi(dx_1,dv_1)\biggr\rangle, \msmzk(dx_2,dv_2)\biggr\rangle ds\\
&\leq C(K)\biggl(\int_0^t\biggl \langle \biggl \langle \biggl(\int_{\Rd \times \Rd} g_m(x_1, x_2, v_1', v_2')
m_\ell(v_1',v_2' | v_1,v_2) d v_{1}'d v_{2}' \biggr),\msmzi(dx_1,dv_1)\biggr\rangle, \msmzk(dx_2,dv_2)\biggr\rangle ds\\
&= I + II + III,
\end{align*}
where we partition the velocity space into three regions that define $I$, $II$, and $III$ as now described. Let
\begin{align*}
\Gamma_{D}(v_1,v_2)=\left\{(v_1',v_2') \in \R^{2d} \,|\,\max\{|v_1-v'_1|, |v_2-v'_2|\}\leq D R\right\},
\end{align*}
and denote by $\Gamma_{D}^c(v_1,v_2)$ its complement in $\R^{2d}$. We will choose the constant $0 < D < \infty$ later.
We split the integral over the regions
\begin{align*}
    A_I &= \{(v_1,v_2,v_1',v_2') \in \R^{4d} \,|\, \max\{|v_1|,|v_2|\} < R, (v_1',v_2') \in \Gamma_{D}(v_1,v_2)\} \\
    A_{II} &= \{(v_1,v_2,v_1',v_2') \in \R^{4d} \,|\, \max\{|v_1|,|v_2|\} < R, (v_1',v_2') \in \Gamma_{D}^c(v_1,v_2)\}  \\
    A_{III} &=\{(v_1,v_2,v_1',v_2') \in \R^{4d} \,|\, \max\{|v_1|,|v_2|\} \geq R, (v_1',v_2') \in \R^{2d}\}.
\end{align*}
Then for $J \in \{I, II, III\}$ the associated integral is
\begin{align*}
    J &= C(K)\biggl(\int_0^t\biggl< \biggl< \biggl(\int_{\Rd \times \Rd} \ind_{A_J}(v_1,v_2,v_1',v_2') g_m(x_1, x_2, v_1', v_2') m_\ell(v_1',v_2' | v_1,v_2) d v_{1}'d v_{2}' \biggr)\nonumber \\
    & \hspace{5cm} \msmzi(dx_1,dv_1)\biggr\rangle,\msmzk(dx_2,dv_2)\biggr\rangle ds\biggr).
\end{align*}

Consider first the integral $I$. On the set $\Gamma_{D}$,  we have for $i=1,2$
\begin{align*}
\left| |v_i|-|v'_i|\right|\leq |v_i-v'_i|\leq D R,
\end{align*}
which then gives
\begin{align*}
\left|(|x_{i}|+|v_i|)-(|x_{i}|+|v'_i|)\right|\leq D R
\end{align*}
We want to bound $g_m(x_1,x_2,v_1',v_2')$ by $f_k$ for some $k$ at the points $(x_i,v_i)$. If $f_m(x_i,v'_i)>0$, then we have that $|x_{i}|+|v'_i|>m-1$, which gives $|x_{i}|+|v_i|>m-1- D R$. So, we will always have
\begin{align*}
f_m(x_i,v'_i)&\leq f_{m-1- D R}(x_i,v_i).
\end{align*}
Next we treat the cross pairs. Our domain assumptions and the triangular inequality give
\begin{align*}
\left| |v'_2|-|v_1|\right|\leq|v'_{2}-v_{1}|&\leq |v'_{2}-v_{2}|+|v_{2}-v_{1}|\leq D R + 2R=(D+2)R
\end{align*}
Hence, the same argument as above gives
\begin{align*}
f_m(x_1,v'_2)&\leq f_{m-1- (D+2) R}(x_1,v_1).
\end{align*}
Similarly, we obtain
\begin{align*}
f_m(x_2,v'_1)&\leq f_{m-1- (D+2) R}(x_2,v_2).
\end{align*}
Combining the above estimates and using the monotonicity of $f_{m}$ in $m$ gives
\begin{align*}
g_m(x_1,x_2,v_1',v_2') \leq f_{m-1-(D+2) R}(x_1,v_1)+f_{m-1-(D+2)R}(x_2, v_2)
\end{align*}
so that
\begin{align*}
& \int_{\Gamma_{D}(v_1,v_2)}g_m(x_1,x_2,v_1',v_2') m_{\ell}(v'_1,v'_2 | v_1,v_2) d v'_1\,d v'_2 \nonumber\\
&\leq \left(f_{m-1-(D+2) R}(x_1,v_1)+f_{m-1-(D+2)R}(x_2, v_2)\right) \int_{\Gamma_{D}(v_1,v_2)} m_{\ell}(v'_1,v'_2 | v_1,v_2) d v'_1\,d v'_2 \nonumber\\
&\leq \left(f_{m-1-(D+2) R}(x_1,v_1)+f_{m-1-(D+2)R}(x_2, v_2)\right) \int_{\R^{2d}} m_{\ell}(v'_1,v'_2 | v_1,v_2) d v'_1\,d v'_2 \nonumber\\
&= \left(f_{m-1-(D+2) R}(x_1,v_1)+f_{m-1-(D+2)R}(x_2, v_2)\right).
\end{align*}
We therefore conclude that
\begin{align*}
    I&\leq C(K)\int_0^t\biggl \langle \biggl \langle \left(f_{m-1-(D+2) R}(x_1,v_1)+f_{m-1-(D+2)R}(x_2, v_2)\right),\msmzi(dx_1,dv_1)\biggr\rangle, \msmzk(dx_2,dv_2)\biggr\rangle ds \\
    &\leq C(K)C_{\circ}\int_0^t\left( \left\langle f_{m-1-(D+2) R}(x_1,v_1), \msmzi(dx_1,dv_1)\right \rangle + \left \langle f_{m-1-(D+2)R}(x_2,v_2), \msmzk(dx_2,dv_2)\right \rangle \right)ds.
\end{align*}

Next we consider integral II. For this purpose we define $Z_{1}=m_{1}+m_{2}=m_{3}+m_{4}$ and $Z_{2}=m_3 v_1' + m_4 v_2'=m_1 v_1 + m_2 v_2$ for the total mass and total momentum respectively. Setting $v'_{12}=v'_{1}-v'_{2}$ we then obtain that
\begin{align*}
v'_{1}&=\frac{Z_{2}+m_{4}v'_{12}}{Z_{1}},\quad \text{ and }\quad v'_{2}=\frac{Z_{2}-m_{3}v'_{12}}{Z_{1}}.
\end{align*}
We change variables in the integration, from $(v'_{1},v'_{2})$ to  $(Z_{2}, v'_{12})$. The Jacobian of this transformation is $1/Z_{1}^{d}$ which we absorb into a generic constant $C$. Then
\begin{align*}
\int_{\Gamma^{c}_{D}(v_1,v_2)} &g_m(x_1,x_2,v_1',v_2') m_{\ell}(v'_1,v'_2 | v_1,v_2) d v'_1\,d v'_2 \\
&\leq 2 \norm{f_m}_{C_b^2(\R^{2d})}\int_{\Gamma^{c}_{D}(v_1,v_2)} m_{\ell}(v'_1,v'_2 | v_1,v_2) d v'_1,d v'_2 \\
&\leq C \norm{f_m}_{C_b^2(\R^{2d})}\int_{\Gamma^{c}_{D}(v_1,v_2)} \rho(|v'_{12}|) \delta(Z_2 - (m_1 v_1 + m_2 v_2)) d v'_{12} \, dZ_2,
\end{align*}
where we have abused notation and denoted by $\Gamma^{c}_{D}(v_1,v_2)$ the set of points in  $(Z_2,v'_{12})$ corresponding to the set $\Gamma^{c}_{D}(v_1,v_2)$ in $(v'_{1},v'_{2})$ via the change of variables.

To further simplify this integral, we now relate the regions $\{|v'_{12}|\leq R\}$ and  $\Gamma^{c}_{D}(v_1,v_2)$. First, we note that
\begin{align*}
v'_{1}-v_{1}&=\frac{Z_{2}+m_{4}v'_{12}}{Z_{1}}-v_{1}=\frac{m_{2}(v_2-v_1)+m_4 v'_{12}}{Z_{1}}\nonumber\\
v'_{2}-v_{2}&=\frac{Z_{2}-m_{3}v'_{12}}{Z_{1}}-v_{2}=\frac{m_{1}(v_1-v_2)-m_3 v'_{12}}{Z_{1}}.
\end{align*}
We have two cases for points in $\Gamma^{c}_{D}(v_1,v_2)$. If $|v'_{1}-v_{1}|> D R$, then the previous expression for $v'_{1}-v_{1}$ and the reverse triangular inequality yield
\begin{align*}
|v'_{12}|&\geq \frac{Z_{1} D R- m_{2} |v_{1}-v_{2}|}{m_{4}} \geq \frac{Z_{1} D R- 2 m_{2} R}{m_{4}}.
\end{align*}
Similarly, if $|v'_{2}-v_{2}|> D R$, then the previous expression for $v'_{2}-v_{2}$ and the reverse triangular inequality yield
\begin{align*}
|v'_{12}|&\geq \frac{Z_{1} D R- m_{1} |v_{1}-v_{2}|}{m_{3}} \geq \frac{Z_{1} D R- 2 m_{1} R}{m_{3}}.
\end{align*}
Hence for $D$ sufficiently large (specifically $D> \left\{\frac{2m_{1}}{m_{1}+m_{2}}, \frac{2m_{2}}{m_{1}+m_{2}}\right\}$) we have that there exists a constant $C(D)>0$ that depends on $D$ and is independent of $R$ such that
\begin{align*}
|v'_{12}|&\geq C(D) R.
\end{align*}
We then have that
\begin{align*}
\Gamma^{c}_{D}(v_1,v_2)\subset \{(Z_2,v'_{12}) \in \R^{2d}\,|\, Z_2 \in \R^{d}, |v'_{12}|\geq C(D)R\}.
\end{align*}

This and the integral tail bound on the density via Assumption \ref{A:AssumptionRho} then yield the integral bound
\begin{align*}
\int_{\Gamma^{c}_{D}(v_1,v_2)} \rho(|v'_{12}|) \delta(Z_2 - (m_1 v_1 + m_2 v_2)) d v'_{12} \, dZ_2
\leq \int_{C(D) R\leq |v'_{12}}\rho(|v'_{12}|) d v'_{12}
\leq \epsilon
\end{align*}
for $R$ sufficiently large so that
\begin{align*}
    II
    \leq C(K) C_{\circ}^2 \norm{f_m}_{C_b^2(\R^{2d})} T \epsilon.
\end{align*}

Finally, we now consider the integral $III$. Note that
\begin{align*}
\int_{\R^{2d}} g_m(x_1,x_2,v_1',v_2') m_{\ell}(v'_1,v'_2 | v_1,v_2) d v'_1\,d v'_2 \leq 2 \norm{f_m}_{C_b^2(\R^{2d})}.
\end{align*}
Then
\begin{align*}
    III &\leq C(K) \norm{f_m}_{C_b^2(\R^{2d})} \int_0^t\biggl \langle \biggl \langle \ind_{\max\{|v_1|,|v_2|\}\geq R} ,\msmzi(dx_1,dv_1)\biggr\rangle, \msmzk(dx_2,dv_2)\biggr\rangle ds,\\
    &\leq C(K)C_{\circ} \norm{f_m}_{C_b^2(\R^{2d})} \int_0^t \biggl\langle \ind_{|v_1|\geq R},\msmzi(dx_1,dv_1)\biggr\rangle  + \biggl \langle \ind_{|v_2|\geq R}, \msmzk(dx_2,dv_2)\biggr\rangle ds, \\
    &\leq C(K)C_{\circ} \norm{f_m}_{C_b^2(\R^{2d})}  \frac{1}{R} \int_0^t \biggl\langle |v_1|,\msmzi(dx_1,dv_1)\biggr\rangle  + \biggl \langle |v_2|, \msmzk(dx_2,dv_2)\biggr\rangle ds,\\
    &\leq C C(K) C_{\circ} \norm{f_m}_{C_b^2(\R^{2d})}  \frac{T}{R} \\
    &\leq C C(K) C_{\circ} \norm{f_m}_{C_b^2(\R^{2d})} T \epsilon
\end{align*}
where we used the uniform moment bound of Lemma~\ref{L:MomentBounds} and assumed $R$ sufficiently large.

\end{proof}

\begin{lemma} \label{L:interchangelimit}
Consider $\gtgamma \coloneqq \mathbb{E}\left[\int_0^t \left\langle |v|^2 \mathbbm{1}_{\{m-1 \leq |x| + |v| \leq m\}} , \msmzj(dx,dv)\right\rangle ds\right]$. Then
\begin{equation*}
\lim_{m\rightarrow\infty} \sup_{\zeta \in (0,1)^2} \sup_{t \in [0,T]} \gtgamma = 0.
\end{equation*}
\end{lemma}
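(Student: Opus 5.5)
The approach is to use the uniform fourth-moment bound of Lemma~\ref{L:MomentBounds} (with $p=2$, available under Assumption~\ref{A:AssumptionMomentFiniteness}) to control the second moment on the annulus by the tail of the fourth moment. The quantity $g_t(m,\zeta)$ is nondecreasing in $t$, so it suffices to bound $g_T(m,\zeta)$ uniformly in $\zeta$.

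The key pointwise inequality holds on the set $\{m-1\leq |x|+|v|\leq m\}$. There either $|v|\geq (m-1)/2$ or $|x|\geq (m-1)/2$.

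In the first case,
\[
|v|^2\leq \frac{4}{(m-1)^2}|v|^4 .
\]

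In the second case, $|v|\leq m$, so
\[
|v|^2\leq m^2\,\mathbbm{1}_{\{|x|\geq (m-1)/2\}}\leq m^2\frac{16\,|x|^4}{(m-1)^4}.
\]

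Either way, for $m\geq 2$ we get
\[
|v|^2\mathbbm{1}_{\{m-1\leq |x|+|v|\leq m\}}\leq \frac{C\,m^2}{(m-1)^4}\bigl(|x|^4+|v|^4\bigr)\leq \frac{C'}{m^2}\bigl(|x|^4+|v|^4\bigr),
\]
with $C'$ independent of $m$ and $\zeta$.

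Integrating against $\mu^{\zeta,j}_{s-}$ and taking expectations then gives
\[
g_t(m,\zeta)\leq \frac{C'}{m^2}\int_0^T \theta^{\zeta,j,4}_s\,ds\leq \frac{C' T}{m^2}\sup_{\zeta}\sup_{s\in[0,T]}\sum_{j=1}^J\theta^{\zeta,j,4}_s .
\]
The right-hand side is finite by Lemma~\ref{L:MomentBounds} and independent of $\zeta$ and $t$. Here I use that $\mu_{s-}=\mu_s$ for almost every $s$, since there are countably many jumps, so the left limits do not affect the time integral. Letting $m\to\infty$ gives the claim.

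The only delicate point is verifying that Lemma~\ref{L:MomentBounds} indeed provides a bound uniform over all $\zeta\in(0,1)^2$, including the $\eta$-dependent error term II in its proof. That lemma already asserts this, so here it is merely invoked. If only $(2+\epsilon)$-moments were available, the same argument would go through with the weaker rate $m^{-\epsilon}$, using $|v|^2\leq |v|^{2+\epsilon}(m-1)^{-\epsilon}2^{\epsilon}$ and the analogous bound in $x$.
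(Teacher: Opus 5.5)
Your proposal is correct and follows essentially the same route as the paper: bound $|v|^2\mathbbm{1}_{\{m-1\le |x|+|v|\le m\}}$ pointwise by $C m^{-2}(|x|^4+|v|^4)$, then invoke the uniform fourth-moment bound of Lemma~\ref{L:MomentBounds}. The only difference is cosmetic: the paper avoids your case split by writing $|v|^2\mathbbm{1}_{\{m-1\le|x|+|v|\le m\}}\le (|x|+|v|)^2\mathbbm{1}_{\{|x|+|v|\ge m-1\}}\le (|x|+|v|)^4/(m-1)^2$ in one step.
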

\begin{proof}
Let $A_m = \{m-1 \leq |x| + |v| \leq m\}$ and note that
\begin{equation*}
\gtgamma = \mathbb{E}\left[\int_0^t \left\langle |v|^2 \mathbbm{1}_{A_m}, \msmzj(dx,dv)\right\rangle ds\right].
\end{equation*}
For $m \geq 2$, we have
\begin{align*}
|v|^2 \mathbbm{1}_{A_m} \leq (|x| + |v|)^2 \mathbbm{1}_{\{|x| + |v| \geq m-1\}} \leq \frac{(|x| + |v|)^4}{(m-1)^2}.
\end{align*}
Since $(|x| + |v|)^4 \leq C (|x|^4 + |v|^4)$ for some constant $C>0$, we have
\begin{align*}
\gtgamma &\leq \frac{1}{(m-1)^2} \mathbb{E}\left[\int_0^t \left\langle (|x| + |v|)^4 , \msmzj(dx,dv)\right\rangle ds\right] \\
&\leq \frac{C}{(m-1)^2} \mathbb{E}\left[\int_0^t \left\langle (|x|^4 + |v|^4) , \msmzj(dx,dv)\right\rangle ds\right] \\
&\leq \frac{C'}{(m-1)^2},
\end{align*}
where $C'$ is independent of $m$, $\zeta$ and $t$ due to the uniform moment bounds established in Lemma \ref{L:MomentBounds}.

\end{proof}

%
%

\section{Conclusion}
We have proven the rigorous mean-field, large-population limit for (underdamped) particle-based reactive Langevin Dynamics (PBRLD) models. Such models were previously studied in~\cite{SamChenLanlanKostas2025}, where it was shown how reaction kernels within them could be formulated such that PBRLD models are consistent in the over-damped limit with volume reactivity particle-based stochastic reaction-diffusion (PBSRD) models. This work complements~\cite{SamChenLanlanKostas2025} by showing that the mean-field limit of PBRLD models is well-defined and that the resulting macroscopic, mean-field equations are a novel set of kinetic reaction-diffusion type partial-integro differential equations (PIDEs). A number of interesting open questions remain, including how the kinetic PIDEs relate to standard reaction-diffusion PDEs, i.e. are the PIDEs that represent the mean-field limit of PBSRD models the overdamped limit of our derived kinetic PIDEs, and does one recover standard reaction-diffusion PDE models when considering overdamped limits combined with short range limits? For the latter, it would be of interest to consider unifying our work on the overdamped limit in~\cite{SamChenLanlanKostas2025} with our work establishing that the short-range limit of the PBSRD mean-field PIDEs recovers standard reaction-diffusion PDE models for certain classes of reaction kernels~\cite{IsaacsonSIAP2021}.

\section{Declaration of generative AI and AI-assisted technology use}
During the editing of this manuscript SAI used Github Copilot as part of the
Visual Studio Code editor. This included auto-completion suggestions for text,
and suggestions for revisions to written text to improve clarity of exposition
and grammar. SAI also had select proofs in a draft manuscript reviewed by GPT 5.5
Pro, which was used to find and verify gaps and mistakes, and suggested
corrections / improvements in several of the lemmas (in particular, it suggested
using the uniform moment bound to control the velocity tail behavior
of $\msmzj$ as in the final proof of Lemma \ref{L:interchangelimit}, which
improved and shortened an earlier argument). After using these tools, the
authors reviewed and edited the content as needed and take full responsibility
for the final content of the article.

\bibliographystyle{plain}
\bibliography{lib.bib}

\end{document}